\begin{document}


\title[Proof of the Tijdeman-Zagier Conjecture via Slope Irrationality and Term Coprimality]
      {Proof of the Tijdeman-Zagier Conjecture via\\Slope Irrationality and Term Coprimality}

\author{David Hauser}
\address{Genpact, New York 10036}
\email{david.hauser@genpact.com}
\thanks{Special thanks to emeritus Professor Harry Hauser
        for support and guidance on this research.}

\author{Ian Hauser}
\address{Solomon Schechter School of Long Island, Williston Park, New York 11596}
\curraddr{Binghamton University, Binghamton, New York, USA 13902}
\email{ihauser2@binghamton.edu}


\subjclass[2010]{Primary 11A41, 11A51, 11D09, 11D41, 11D61, 97G99;
                 Secondary 05A10}
\keywords{Prime, coprimality, irrationality, parameterization, binomial expansion, slope, Beal's Conjecture}

\begin{abstract}
The Tijdeman-Zagier conjecture states no integer solution exists for $A^X+B^Y=C^Z$ with positive integer bases and integer exponents greater than 2 unless gcd$(A,B,C)>1$.  Any set of values that satisfy the conjecture correspond to a lattice point on a Cartesian graph which subtends a line in multi-dimensional space with the origin.  Properties of the slopes of these lines in each plane are established as a function of coprimality of terms, such as irrationality, which enable us to explicitly prove the conjecture by contradiction.
\end{abstract}

\maketitle


\newtheorem{theorem}{Theorem}[section]
\newtheorem{lemma}{Lemma}[section]
\newtheorem{prop}{Proposition}[section]

\newtheorem{definition}{Definition}[section]

\setlength{\emergencystretch}{10pt}
\sloppy
\raggedbottom
\newcommand\Line{\noindent\rule{\textwidth}{0.4pt}}

\newcommand\BealsEq{$A^X+B^Y=C^Z$}
\newcommand\Conjecture{Given positive integers $A$, $B$, $C$, $X$, $Y$, $Z$,
                       where $X,Y,Z\geq 3$ and \BealsEq}

\newcommand\PList{p_1 p_2 p_3 \cdots p_m}
\newcommand\QList{q_1 q_2 q_3 \cdots q_n}
\newcommand\RList{r_1 r_2 r_3 \cdots r_s}

\newcommand\PAB{p_{_{AB}}}
\newcommand\PAC{p_{_{AC}}}
\newcommand\PBC{p_{_{BC}}}

\newcommand\MCB{m_{_{CB}}}
\newcommand\MCA{m_{_{CA}}}
\newcommand\MBA{m_{_{BA}}}

\newcommand\alphaCA{\alpha_{_{CA}}}
\newcommand\alphaCB{\alpha_{_{CB}}}

\newcommand\betaCA{\beta_{_{CA}}}
\newcommand\betaCB{\beta_{_{CB}}}

\newcommand{\xdownarrow}[1]{%
  {\left\downarrow\vbox to #1{}\right.\kern-\nulldelimiterspace}
}


\section{Introduction}
The Tijdeman-Zagier conjecture \cites{elkies2007abc, waldschmidt2004open, crandall2006prime}, or more formally the generalized Fermat equation \cite{bennett2015generalized} states that given
\begin{equation}
A^X+B^Y=C^Z \label{eqn:1}
\end{equation}
where $A$, $B$, $C$, $X$, $Y$, $Z$ are positive integers, and exponents $X$, $Y$, $Z\geq3$, then bases $A$, $B$, and $C$ have a common factor.  Some researchers refer to this conjecture as Beal's conjecture \cite{beal1997generalization}.  There are considerable theoretical foundational advances in and around this topic \cites{nitaj1995conjecture, darmon1995equations, vega2020complexity}.  Many exhaustive searches within limited domains have produced indications this conjecture may be correct \cites{beal1997generalization, norvig2010beal, durango}.  More formal attempts to explicitly prove or counter-prove the conjecture abound in the literature but are often unable to entirely generalize due to difficulty in overcoming floating point arithmetic limits, circular logic, unsubstantiated claims, incomplete steps, or reliance on conjectures \cites{di2013proof, norvig2010beal, dahmen2013perfect, de2016solutions}.  Many partial proofs are published in which limited domains, conditional upper bounds of the exponents, specific configuration of bases or exponents, or additional, relaxed, or modified constraints are applied in which the conjecture holds \cites{beukers1998, beukers2020generalized, siksek2012partial, poonen1998some, merel1997winding, bennett2006equation, anni2016modular, billerey2018some, kraus1998equation, poonen2007twists, siksek2014generalised, miyazaki2015upper}.   

Some researchers demonstrate or prove limited coprimality of the exponents \cite{ghosh2011proof}, properties of perfect powers and relationships to Pillai's conjecture \cite{waldschmidt2009perfect}, impossibility of solutions for specific bases \cite{mihailescu2004primary}, influence of the parity of the exponents \cite{joseph2018another}, characterizations of related Diophantine equations \cite{nathanson2016diophantine}, relationship between the smallest base and the common factor \cite{townsend2010search}, and countless other insights.  


To formally establish a rigorous and complete proof, we need to consider two complimentary conditions: 1) when gcd$(A,B,C)=1$ there is no integer solution to \BealsEq, and 2) if there is an integer solution, then gcd$(A,B,C)>1$.  The approach we take is linked to the properties of slopes.  An integer solution that satisfies the conjecture also marks a point $(A,B,C)$ that subtends a line through the origin on a 3 dimensional Cartesian graph.  Being integers, this is a lattice point and thus the line has a rational slope in all 3 planes.  Among other properties, it will be shown that if gcd$(A,B,C)=1$ with integer exponents, then one or more of the slopes of the subtended line is irrational and cannot pass through any non-trivial lattice points.  Conversely it will be shown that if there exists a solution that satisfies the conjecture, the subtended line must pass through a non-trivial lattice point and must have rational slopes.

\section{Details of the Proof}

To establish the proof requires we first identify, substantiate, and then prove several preliminary properties:
\begin{itemize}
\item \textbf{Slopes of the Terms}:  determine slopes of lines subtended by the origin
              and the lattice point $(A,B,C)$ that satisfy the terms of the conjecture
              (\cref{Thm:2.1_Irrational_Slope_No_Lattice} on
              pages \pageref{Section:Slopes_Start} to \pageref{Section:Slopes_End}).
\item \textbf{Coprimality of the Bases}:  determine implications of 3-way coprimality on
              pairwise coprimality and implications of pairwise coprimality on 3-way
              coprimality
              (\cref{Thm:2.2_Coprime,Thm:2.3_Coprime,Thm:2.4_Coprime}
              on pages \pageref{Section:Comprimality_Start} to
              \pageref{Section:Comprimality_End}).
\item \textbf{Restrictions of the Exponents}:  determine limits of the exponents related
              to coprimality of the bases and bounds of the conjecture
                          (\cref{Thm:2.5_X_cannot_be_mult_of_Z} on pages \pageref{Section:Exponents_Start}
                          to \pageref{Section:Exponents_End}).
\item \textbf{Reparameterization of the Terms}:  determine equivalent functional forms
              of the terms and associated properties as related to coprimality of the terms
              (\cref{Thm:2.6_Initial_Expansion_of_Differences,Thm:2.7_Indeterminate_Limit,Thm:2.8_Functional_Form,Thm:2.9_Real_Alpha_Beta,Thm:2.10_No_Solution_Alpha_Beta_Irrational,Thm:2.11_Coprime_Alpha_Beta_Irrational,Thm:2.12_Rational_Alpha_Beta_Rational_Then_Not_Coprime,Thm:2.13_Coprime_Any_Alpha_Beta_Irrational_Indeterminate} on
              pages \pageref{Section:Reparameterize_Start} to
               \pageref{Section:Reparameterize_End}).
\item \textbf{Impossibility of the Terms}:  determine the relationship between
              term coprimality and slope irrationality, and between
              slope irrationality and solution impossibility
              (\cref{Thm:2.14_Main_Proof_Coprime_No_Solutions} on
              pages \pageref{Section:Impossibility_Start} to \pageref{Section:Impossibility_End}).
\item \textbf{Requirement for Possibility of the Terms}:  determine characteristics of
              gcd$(A,B,C)$ required for there to exist a solution given the
              properties of slopes and coprimality
              (\cref{Thm:2.15_Main_Proof_Solutions_Then_Not_Coprime} on pages
              \pageref{Section:Possibility_Start} to \pageref{Section:Possibility_End}).
\end{itemize}

Before articulating each of the underlying formal proofs, we establish two specific definitions to ensure consistency of interpretation.
\begin{enumerate}
\item \textbf{Reduced Form}:  We define the bases of $A^X$, $B^Y$, and $C^Z$ to be in reduced
                          form, meaning that rather than let the bases be perfect powers, we
                          define  exponents $X$, $Y$, and $Z$ such that the corresponding bases
                          are not perfect powers.  For example, $8^5$ can be reduced to
                          $2^{15}$ and thus the base and exponent would be 2 and 15,
                          respectively, not 8 and 5, respectively.  Hence throughout this
                          document, we assume all bases are reduced accordingly given that the
                          reduced and non-reduced forms of bases raised to their corresponding
                          exponents are equal.
\item $\bm{C^Z=f(A,B,X,Y)}$:  Without loss of generality, when establishing the impossibility
                          of integer solutions, unless stated otherwise, we assume to start
                          with integer values for $A$, $B$, $X$, and $Y$ and then determine
                          the impossibility of integers for $C$ or $Z$.  Given the commutative
                          property of the equation, we hereafter base the determination of
                          integrality of $C$ and $Z$ as a function of definite integrality of
                          $A$, $B$, $X$, and $Y$, as doing so for any other combination of
                          variables is a trivial generalization.
\end{enumerate}

The following hereafter establishes the above objectives.

\Line
\subsection{Slopes of the Terms}
\label{Section:Slopes_Start}
Although the exponents in \BealsEq\, suggest a volumetric relationship between cubes and hypercubes, and given that the exponents cannot all be the same as it would violate Fermat's last theorem \cites{wiles1995modular, taylor1995ring, shanks2001solved}, the expectation of apparent geometric interpretability is low.  However every set of values that satisfy the conjecture correspond to a point on a Cartesian grid and subtend a line segment with the origin, which in turn means properties of the slopes of these line segments are directly related to the conjecture.  Properties of these slopes form a crucial basis for the subsequent main proofs.

\begin{figure}
\large{$C^Z\, vs\, B^Y\, vs\, A^X$}\\
\includegraphics[width=.5\textwidth]{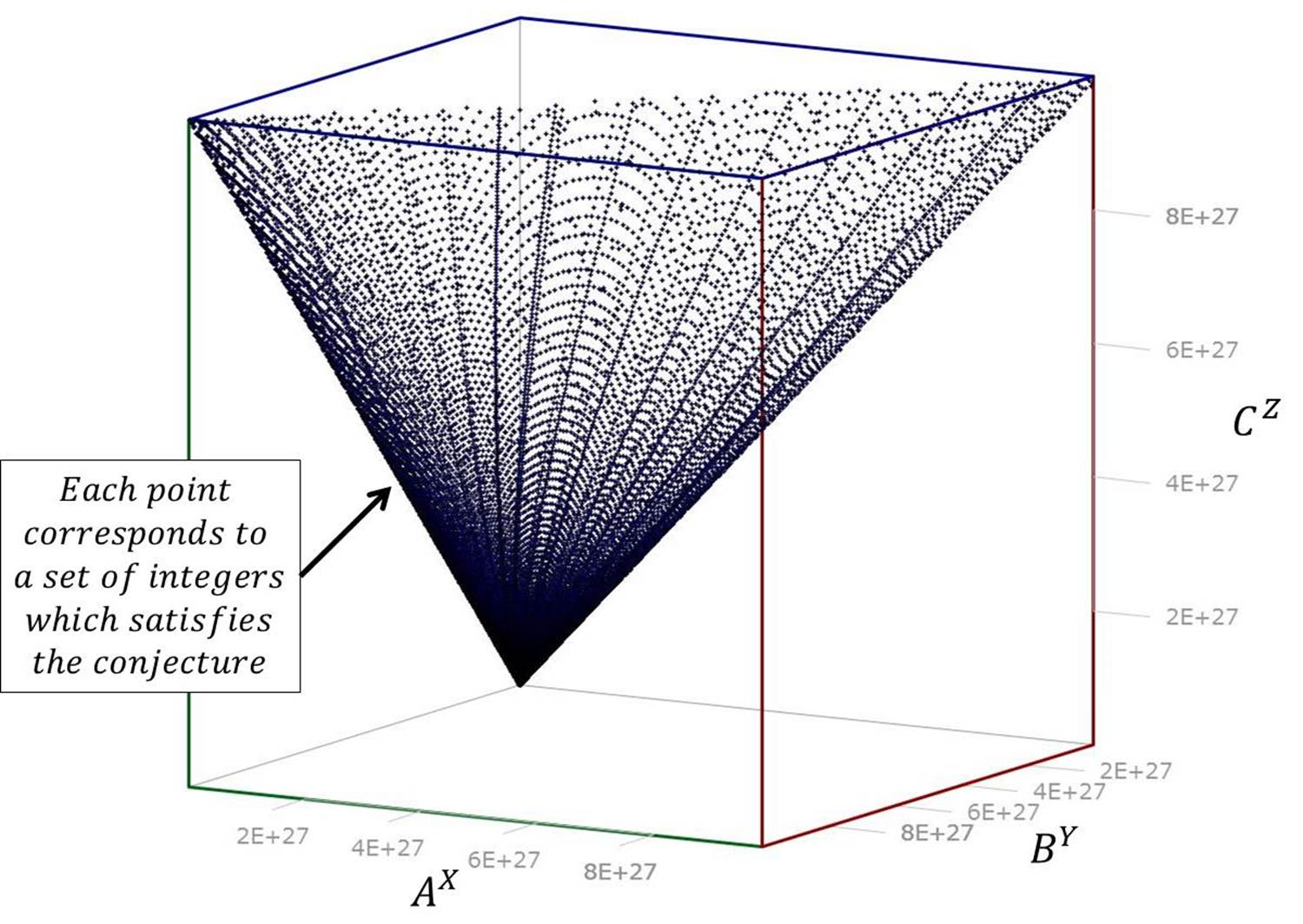}
\caption{Plot of $(A^X,B^Y,C^Z)$ given positive $A$, $B$, $C$, $X$, $Y$, and $Z$, where \BealsEq, $A^X+B^Y\leq10^{28}$, $X\geq4$, $Y,Z\geq3$.}
\label{Fig:CZBYAZScatter}
\end{figure}

In the Cartesian plot of $A^X\times B^Y\times C^Z$, each point $(A^X, B^Y, C^Z)$ corresponds to a specific integer solution that satisfies the conjecture found from exhaustive search within a limited domain.  See \cref{Fig:CZBYAZScatter}.  There exists a unique line segment between each point and the origin.  The line segment subtends a line segment in each of the three planes, and a set of corresponding angles in those planes made with the axes.  See \cref{Fig:3DScatter,Fig:ScatterPlotWithAngles}.

\begin{figure}
\includegraphics[width=.33\textwidth]{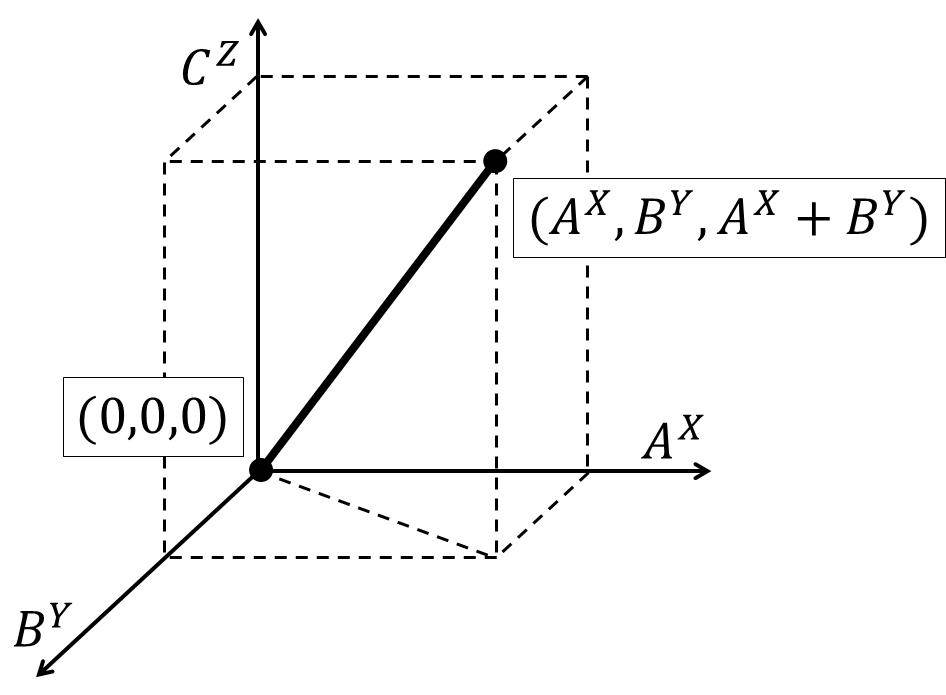}
\caption{Line segment connecting the origin and point $(A^X, B^Y, C^Z)$ where \BealsEq\, satisfying the conjecture from \cref{Fig:CZBYAZScatter}.}
\label{Fig:3DScatter}
\end{figure}

\begin{figure}
\includegraphics[width=.66\textwidth]{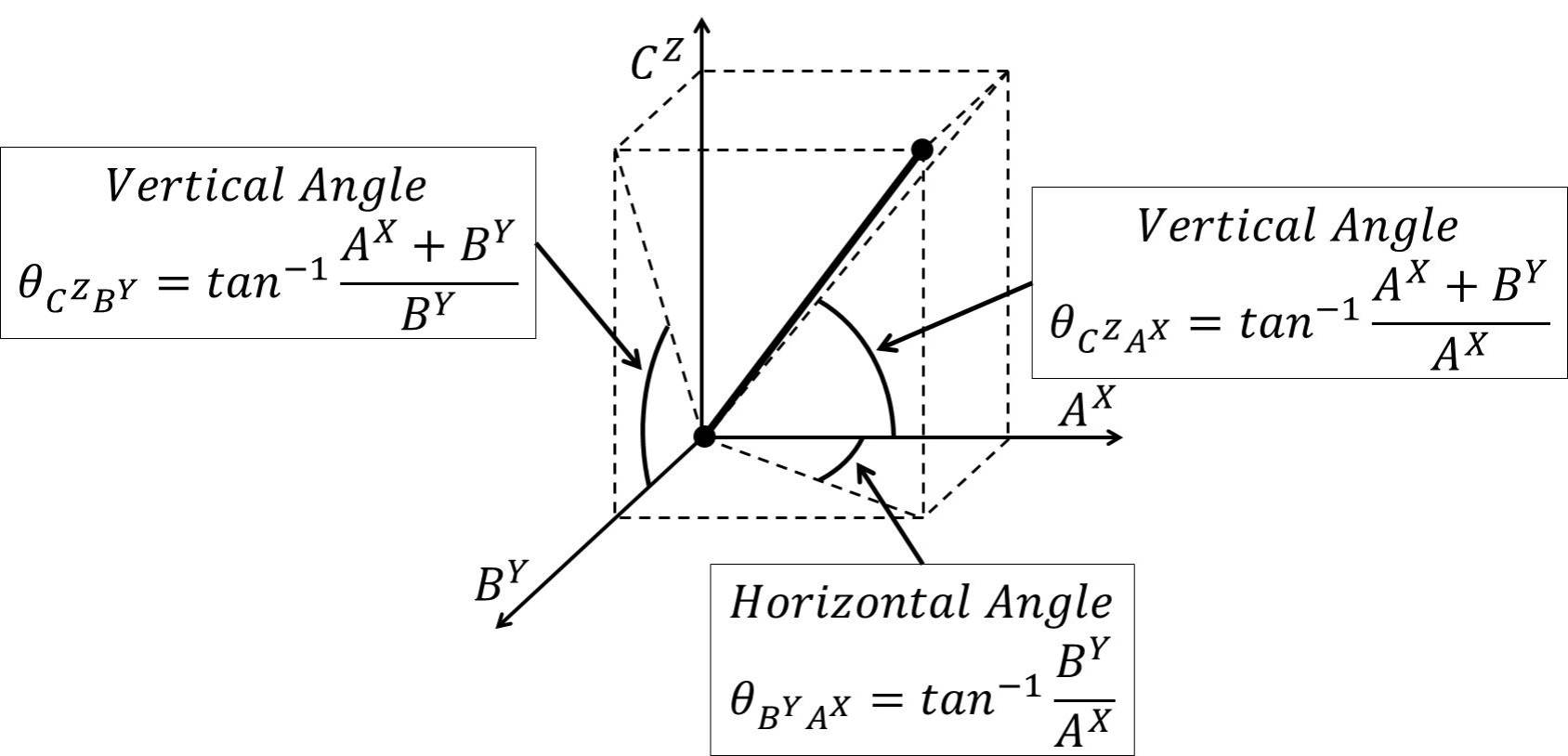}
\caption{Angles between the axes and the line segment subtended by the origin and point $(A^X, B^Y, C^Z)$ from \cref{Fig:CZBYAZScatter}.}
\label{Fig:ScatterPlotWithAngles}
\end{figure}

\bigskip

\begin{theorem}
\label{Thm:2.1_Irrational_Slope_No_Lattice}
If the slope $m$ of line $y=mx$ is irrational, then the line does not go through any non-trivial lattice points.
\end{theorem}

\begin{proof}
Suppose there exists a line $y=mx$, with irrational slope $m$ that goes through lattice point $(X,Y)$.  Then the slope can be calculated from the two known lattice points through which this line traverses, $(0,0)$ and $(X,Y)$.  Hence the slope is $\displaystyle{m=\frac{Y-0}{X-0}}$.  However, $\displaystyle{m=\frac{Y}{X}}$ is the ratio of integers, thus contradicting $m$ being irrational, hence a line with an irrational slope passing through the origin cannot pass through a non-trivial lattice point.
\end{proof}

Since values that satisfy the conjecture are integer, by definition they correspond to a lattice point and thus the line segment between that lattice point and the origin will have a rational slope per \cref{Thm:2.1_Irrational_Slope_No_Lattice}.  We next establish the properties of term coprimality and thereafter the relationship coprimality has on the slope irrationality.  Thereafter we establish several other preliminary proofs relating to reparameterizations and non-standard binomial expansions before returning back to the connection between term coprimality, slope irrationality, and the conjecture proof.
\label{Section:Slopes_End}

\Line
\subsection{Coprimality of the Bases}
\label{Section:Comprimality_Start}
According to the conjecture, solutions only exist when gcd$(A,B,C)>1$.  Hence testing for impossibility when gcd$(A,B,C)=1$ requires we establish the relationship between 3-way coprimality and the more stringent pairwise coprimality.

\bigskip

\begin{theorem}
\label{Thm:2.2_Coprime}
\Conjecture, if gcd$(A,B)=1$, then gcd$(A^X,C^Z )=$ gcd$(B^Y,C^Z )=1$.\\
\{If $A$ and $B$ are coprime, then $C^Z$ is pairwise coprime to $A^X$ and $B^Y$.\}
\end{theorem}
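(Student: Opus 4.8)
The plan is to argue by contradiction at the level of prime divisors, using the generalized Fermat equation itself to transport a common prime factor onto the third base. First I would record the elementary reduction that, for positive integers, $\gcd(A^X,C^Z)=1$ exactly when $\gcd(A,C)=1$, and likewise $\gcd(B^Y,C^Z)=1$ exactly when $\gcd(B,C)=1$, since a perfect power and its base have the same set of prime divisors. Thus it suffices to show that the hypothesis $\gcd(A,B)=1$ forces $\gcd(A,C)=1$ and $\gcd(B,C)=1$.

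Next, suppose for contradiction that some prime $p$ divides both $A$ and $C$. Then $p\mid A^X$ and $p\mid C^Z$, and rearranging $A^X+B^Y=C^Z$ as $B^Y=C^Z-A^X$ shows $p\mid B^Y$, hence $p\mid B$. But then $p\mid\gcd(A,B)$, contradicting $\gcd(A,B)=1$. An identical argument, writing instead $A^X=C^Z-B^Y$, rules out a prime $p$ dividing both $B$ and $C$: such a $p$ would divide $A^X$, hence $A$, again contradicting $\gcd(A,B)=1$.

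Combining the two cases, no prime divides both $A$ and $C$ and no prime divides both $B$ and $C$, so $\gcd(A,C)=\gcd(B,C)=1$, and therefore $\gcd(A^X,C^Z)=\gcd(B^Y,C^Z)=1$, as claimed. I do not anticipate any genuine obstacle here; the proof is short, and the only points requiring a little care are the passage between coprimality of the powers and coprimality of the underlying bases, and the explicit use of \eqref{eqn:1} to move a shared prime factor of $\{A,C\}$ (or $\{B,C\}$) onto the remaining base.
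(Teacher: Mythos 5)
Your proposal is correct and uses essentially the same idea as the paper: a prime shared by two of the bases is transported through $A^X+B^Y=C^Z$ onto the third, contradicting $\gcd(A,B)=1$. The paper phrases it directly (a prime factor of $A^X$ cannot divide $A^X+B^Y$ because it does not divide $B^Y$), while you phrase it as a contradiction starting from a prime dividing $A$ and $C$, and you make explicit the equivalence between coprimality of the powers and of the bases, which the paper uses implicitly; these are presentational differences only.
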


\begin{proof}
Suppose $A$ and $B$ are coprime.  Then $A^X$ and $B^Y$ are coprime, and we can define these terms based on their respective prime factors, namely
\begin{subequations}
\begin{gather}
A^X = \PList \label{eqn:2a} \\
B^Y = \QList \label{eqn:2b}
\end{gather}
\end{subequations}
where $p_i$, $q_j$ are prime, and $p_i\neq q_j$, for all $i,j$.  Based on \cref{eqn:1}, we can express $C^Z$ as
\begin{equation}
C^Z=\PList + \QList \label{eqn:3}
\end{equation}

We now take any prime factor of $A^X$ or $B^Y$ and divide both sides of \cref{eqn:3} by that prime factor.  Without loss of generalization, suppose we choose $p_i$.  Thus
\begin{subequations}
\begin{align}
\frac{C^Z}{p_i} &= \frac{\PList + \QList}{p_i}             \label{eqn:4a} \\
\frac{C^Z}{p_i} &= \frac{\PList}{p_i} + \frac{\QList}{p_i} \label{eqn:4b}
\end{align}
\end{subequations}
The term $\displaystyle\frac{\PList}{p_i}$  is an integer since by definition $p_i |\PList$.  However the term  $\displaystyle\frac{\QList}{p_i}$ cannot be simplified since $p_i \nmid \QList$ and thus $p_i \nmid (\PList+\QList)$.  Hence by extension $p_i \nmid C^Z$ and $A^X$ must thus be coprime to $C^Z$.  By applying the same logic with $q_j$, then $B^Y$ must also be coprime to $C^Z$.  Therefore if $A$ and $B$ are coprime, then $C^Z$ must be pairwise coprime to both $A^X$ and $B^Y$.
\end{proof}

\bigskip

\begin{theorem}
\label{Thm:2.3_Coprime}
\Conjecture, if gcd$(A,C)=1$ or gcd$(B,C)=1$ then gcd$(A^X,C^Z)=$ gcd$(B^Y,C^Z)=1$.\\
\{If $A$ or $B$ is coprime to $C$, then $C^Z$ is pairwise coprime to $A^X$ and $B^Y$.\}
\end{theorem}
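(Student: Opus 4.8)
The plan is to reduce this statement to \cref{Thm:2.2_Coprime} by showing that coprimality of one of the base pairs involving $C$ already forces $A$ and $B$ to be coprime to each other. First I would treat the hypothesis $\gcd(A,C)=1$. Working from \cref{eqn:1}, I would argue that any prime $p$ dividing both $A$ and $B$ would divide $A^X$ and $B^Y$ (using $X,Y\geq 1$), hence would divide their sum $C^Z$, and therefore would divide $C$; this would place $p$ into $\gcd(A,C)$, contradicting $\gcd(A,C)=1$. Consequently $\gcd(A,B)=1$, and \cref{Thm:2.2_Coprime} then yields $\gcd(A^X,C^Z)=\gcd(B^Y,C^Z)=1$ directly.

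The hypothesis $\gcd(B,C)=1$ is handled by the verbatim argument with the roles of $A$ and $B$ interchanged, which is legitimate by the commutativity of $A^X+B^Y$ noted among the preliminary conventions. Since the theorem's hypothesis is a disjunction, establishing both cases completes the proof. Alternatively, one can bypass \cref{Thm:2.2_Coprime} entirely: assuming $\gcd(A,C)=1$, the conclusion $\gcd(A^X,C^Z)=1$ is immediate, and for $\gcd(B^Y,C^Z)=1$ one supposes a prime $p\mid\gcd(B^Y,C^Z)$, so that $p\mid B$ and $p\mid C$, whence $p\mid C^Z-B^Y=A^X$ and thus $p\mid A$, again contradicting $\gcd(A,C)=1$.

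The only point needing any care is the elementary equivalence, for a prime $p$ and a positive integer exponent $n$, that $p\mid t$ if and only if $p\mid t^{n}$; this is what lets us move freely between a base and its associated prime-power term, and it rests only on $X,Y,Z\geq 1$, which is assumed throughout. Beyond this, the argument is routine, so I do not anticipate a genuine obstacle; the substantive content has already been packaged into \cref{Thm:2.2_Coprime}.
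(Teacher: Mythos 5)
Your proof is correct, but it is organized differently from the paper's. The paper does not reduce \cref{Thm:2.3_Coprime} to \cref{Thm:2.2_Coprime}; instead it runs a parallel direct argument: it writes $B^Y=C^Z-A^X$ via the prime factorizations of $C^Z$ and $A^X$, divides through by an arbitrary prime factor $r_k$ of $C^Z$, and argues that $r_k\nmid B^Y$ because $r_k\nmid A^X$, concluding $\gcd(B^Y,C^Z)=1$ (and, symmetrically, $\gcd(A^X,C^Z)=1$, which under the hypothesis $\gcd(A,C)=1$ is immediate anyway). Your primary route --- showing that $\gcd(A,C)=1$ already forces $\gcd(A,B)=1$ (a common prime of $A$ and $B$ would divide $A^X+B^Y=C^Z$ and hence $C$) and then invoking \cref{Thm:2.2_Coprime} --- is a genuinely different decomposition: it makes the logical dependence explicit, avoids repeating the divisibility argument, and shows that the three coprimality statements are really one lemma applied in different guises. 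Your fallback argument (a prime $p$ dividing $B^Y$ and $C^Z$ divides $A^X=C^Z-B^Y$, hence $A$, contradicting $\gcd(A,C)=1$) is essentially the paper's direct method, but stated in cleaner divisibility language than the paper's phrasing about which fractions ``cannot be simplified''; the one fact you correctly isolate, that $p\mid t^n$ iff $p\mid t$ for $p$ prime, is exactly what both arguments rest on. Either of your routes suffices, and nothing in your proposal constitutes a gap.
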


\begin{proof}
Without loss of generalization, suppose $A$ and $C$ are coprime.  Thus $A^X$ and $C^Z$ are coprime.  We can define $C^Z$ based on its prime factors, namely
\begin{equation}
C^Z=\RList \label{eqn:5}
\end{equation}
where $r_k$ are primes. Based on \cref{eqn:2a,eqn:2a,eqn:5}, we can define $B^Y$ based on the difference between $C^Z$ and $A^X$, namely
\begin{equation}
B^Y=\RList - \PList \label{eqn:6}
\end{equation}
We now take any prime factor of $C^Z$ and divide both sides of \cref{eqn:6} by that prime factor.  Without loss of generalization, suppose we choose $r_k$.  Thus
\begin{subequations}
\begin{align}
\frac{B^Y}{r_k} &= \frac{\RList - \PList}{r_k}             \label{eqn:7a} \\
\frac{B^Y}{r_k} &= \frac{\RList}{r_k} - \frac{\PList}{r_k} \label{eqn:7b}
\end{align}
\end{subequations}
The term $\displaystyle\frac{\RList}{r_k}$ is an integer since by definition $r_k |\RList$.  However the term  $\displaystyle\frac{\PList}{r_k}$ cannot be simplified since $r_k \nmid \PList$ and thus  $r_k \nmid (\RList - \PList)$.  Hence by extension $r_k \nmid B^Y$ and $C^Z$ must thus be coprime to $B^Y$.  By applying the same logic with $p_i$, then $C^Z$ must also be coprime to $A^X$.  Therefore if either $A$ or $B$ is coprime to $C$, then $C^Z$ must be pairwise coprime to both $A^X$ and $B^Y$.
\end{proof}

\bigskip

\begin{theorem}
\label{Thm:2.4_Coprime}
\Conjecture, if gcd$(A,B,C) = 1$ then gcd $(A^X,B^Y)=$ gcd $(A^X,C^Z)=$ gcd $(B^Y,C^Z)=1$\\
\{If $A$, $B$ and $C$ are 3-way coprime, then they are all pairwise coprime.\}
\end{theorem}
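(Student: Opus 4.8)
The plan is to argue by contradiction, using the fact that the defining relation \cref{eqn:1} forces any prime dividing two of the bases to divide the third. First I would reduce the statement about the powers to one about the bases: for any prime $p$ and any positive exponent $e$, $p \mid n^e$ if and only if $p \mid n$, so $\gcd(A^X,B^Y)=1 \iff \gcd(A,B)=1$, and likewise for the pairs $(A,C)$ and $(B,C)$. Hence it suffices to prove that $\gcd(A,B,C)=1$ implies $\gcd(A,B)=\gcd(A,C)=\gcd(B,C)=1$.

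Now suppose, for contradiction, that a prime $p$ divides two of $A$, $B$, $C$. If $p\mid A$ and $p\mid B$, then $p\mid A^X+B^Y=C^Z$, hence $p\mid C$. If $p\mid A$ and $p\mid C$, then $p\mid C^Z-A^X=B^Y$, hence $p\mid B$. If $p\mid B$ and $p\mid C$, then $p\mid C^Z-B^Y=A^X$, hence $p\mid A$. In each case $p$ divides all three of $A$, $B$, $C$, contradicting $\gcd(A,B,C)=1$. Therefore no prime divides two of the bases, so $A$, $B$, $C$ are pairwise coprime, and by the reduction above so are $A^X$, $B^Y$, $C^Z$.

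As an alternative packaging, once the first case has established $\gcd(A,B)=1$, one may invoke \cref{Thm:2.2_Coprime} directly to obtain $\gcd(A^X,C^Z)=\gcd(B^Y,C^Z)=1$, with $\gcd(A^X,B^Y)=1$ immediate from $\gcd(A,B)=1$; \cref{Thm:2.3_Coprime} gives a symmetric route. I would most likely present the self-contained three-case version, since it is shorter and makes the role of \cref{eqn:1} transparent.

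I do not anticipate any real obstacle: the argument is a one-line divisibility observation repeated three times. The only point worth flagging is that the conclusion genuinely requires the hypothesis $A^X+B^Y=C^Z$ — the bare implication ``$\gcd(A,B,C)=1 \Rightarrow$ pairwise coprime'' is false in general, as $\gcd(6,10,15)=1$ shows — whereas the exponent bounds $X,Y,Z\ge 3$ are not used at all.
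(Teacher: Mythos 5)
Your proof is correct and takes essentially the same approach as the paper: assume a common divisor of two of the bases and use $A^X+B^Y=C^Z$ to propagate it to the third, contradicting $\gcd(A,B,C)=1$. Your version is in fact slightly tighter, since by working with a prime divisor $p$ (and noting $p\mid n^e \iff p\mid n$) you avoid the paper's implicit step that a possibly composite common factor $k$ of $A^X$ and $B^Y$ dividing $C^Z$ forces a common factor of the bases $A$, $B$, $C$ themselves.
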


\begin{proof} We consider two scenarios when gcd$(A,B,C)=1$, namely: gcd$(A,B)>1$ and gcd$(A,C)>1$ (the later of which generalizes to gcd$(B,C)>1$).

\bigskip

\textbf{Scenario 1 of 2:}  Suppose gcd$(A,B,C)=1$ while gcd$(A,B)>1$.  Therefore $A$ and $B$ have a common factor.  Thus we can express $A^X$ and $B^Y$ relative to their common factor, namely
\begin{subequations}
\begin{gather}
A^X = k\cdot\PList \label{eqn:8a} \\
B^Y = k\cdot\QList \label{eqn:8b}
\end{gather}
\end{subequations}
where integer $k$ is the common factor, and $p_i$, $q_j$ are prime, and $p_i\neq q_j$, for all $i,j$.  Based on \cref{eqn:1}, we can express $C^Z$ as
\begin{subequations}
\begin{gather}
C^Z=k\cdot\PList + k\cdot\QList \label{eqn:9a} \\
C^Z=k(\PList + \QList)          \label{eqn:9b}
\end{gather}
\end{subequations}
Per \cref{eqn:9b}, $k$ is a factor of $C^Z$, just as it is a factor of $A^X$ and $B^Y$, thus gcd$(A,B,C)\neq1$, hence a contradiction.  Thus when gcd$(A,B,C)=1$ we know gcd$(A,B)\ngtr1$ and thus $k$ must be 1.

\bigskip

\textbf{Scenario 2 of 2:}  Suppose gcd$(A,B,C)=1$ while gcd$(A,C)>1$.  Therefore $A$ and $C$ have a common factor.  Thus we can express $A^X$ and $C^Z$ relative to their common factor, namely
\begin{subequations}
\begin{gather}
A^X = k\cdot\PList \label{eqn:10a} \\
C^Z = k\cdot\RList \label{eqn:10b}
\end{gather}
\end{subequations}
where integer $k$ is the common factor, and $p_i$, $r_k$ are prime, and $p_i\neq r_k$, for all $i,k$.  Based on \cref{eqn:1}, we can express $B^Y$ as
\begin{subequations}
\begin{gather}
B^Y=k\cdot\RList - k\cdot\PList \label{eqn:11a} \\
B^Y=k(\RList - \PList)          \label{eqn:11b}
\end{gather}
\end{subequations}
Per \cref{eqn:11b}, $k$ is a factor of $B^Y$, just as it is a factor of $A^X$ and $C^Z$, thus gcd$(A,B,C)\neq1$, hence a contradiction.  Thus when gcd$(A,B,C)=1$ we know gcd$(A,C)\ngtr1$ and thus $k$ must be 1.  By extension and generalization, when gcd$(A,B,C)=1$ we know gcd$(B,C)\ngtr1$.
\end{proof}

Based on \cref{Thm:2.2_Coprime,Thm:2.3_Coprime,Thm:2.4_Coprime} if any pair of terms $A$, $B$, and $C$ have no common factor, then all pairs of terms are coprime.  Hence either all three terms share a common factor or they are all pairwise coprime.  We thus formally conclude that if gcd$(A, B, C)=1$, then gcd$(A^X,B^Y)=$ gcd$(A^X,C^Z)=$ gcd$(B^Y,C^Z)=1$, and if gcd$(A,B)=1$ or gcd$(A,C)=1$ or gcd$(B,C)=1$, then gcd$(A,B,C)=1$.
\label{Section:Comprimality_End}

\bigskip

\Line
\subsection{Restrictions of the Exponents}
\label{Section:Exponents_Start}
Trivial restrictions of the exponents are defined by the conjecture, namely integer and greater than 2.  However, other restrictions apply such as, per Fermat's last theorem, the exponents cannot be equal while greater than 2.  More subtle restrictions also apply which will be required for the main proofs.

\begin{theorem}
\label{Thm:2.5_X_cannot_be_mult_of_Z}
\Conjecture, exponents $X$ and $Y$ cannot simultaneously be integer multiples of exponent $Z$.
\end{theorem}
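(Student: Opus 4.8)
The plan is a short proof by contradiction resting on Fermat's Last Theorem. Suppose, contrary to the claim, that both $X$ and $Y$ are integer multiples of $Z$, say $X=jZ$ and $Y=kZ$ for positive integers $j,k$; positivity is automatic since $X,Y,Z\geq3$ forces $j,k\geq1$.

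The next step is to substitute this into \cref{eqn:1}. Because $A^X=A^{jZ}=(A^j)^Z$ and $B^Y=B^{kZ}=(B^k)^Z$, the equation \BealsEq\ rewrites as
\[
(A^j)^Z+(B^k)^Z=C^Z .
\]
Here $A^j$, $B^k$, and $C$ are positive integers and the common exponent $Z$ satisfies $Z\geq3$, so this is an instance of the Fermat equation $u^Z+v^Z=w^Z$ in positive integers with exponent at least $3$. By Fermat's Last Theorem \cites{wiles1995modular, taylor1995ring, shanks2001solved}, no such solution exists, which is the desired contradiction. Hence $X$ and $Y$ cannot simultaneously be integer multiples of $Z$.

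I do not expect a genuine obstacle here: the argument is immediate once the regrouping $A^{jZ}=(A^j)^Z$, $B^{kZ}=(B^k)^Z$ is made, and no analytic or floating-point estimates are involved. The only points that warrant a sentence of care are verifying that the hypotheses of Fermat's Last Theorem actually hold — that $A^j$ and $B^k$ are positive integers, which is clear since $A$ and $B$ are, and that the exponent $Z$ is at least $3$, which is given. One might also remark on the mild subtlety that $A^j$ or $B^k$ need not be in the paper's \emph{reduced form} (they are perfect powers as soon as $j,k\geq2$); this is irrelevant, since reduced form is merely a convention on how a base--exponent pair is named, whereas Fermat's Last Theorem only requires the numerical values $A^j$, $B^k$, $C$ to be positive integers.
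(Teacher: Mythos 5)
Your proposal is correct and follows essentially the same route as the paper: substitute $X=jZ$, $Y=kZ$ to rewrite the equation as $(A^j)^Z+(B^k)^Z=C^Z$ and invoke Fermat's Last Theorem with common exponent $Z\geq3$ to obtain the contradiction. Your added remark about reduced form being irrelevant here is a sensible clarification but does not change the argument.
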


\begin{proof}
Suppose $X$ and $Y$ are simultaneously each integer multiples of $Z$.  Thus $X=jZ$ and $Y=kZ$ for positive integers $j$ and $k$.  Therefore we can restate \cref{eqn:1} as
\begin{equation}
(A^j)^Z + (B^k)^Z= C^Z  \label{eqn:12}
\end{equation}
Per \cref{eqn:12}, we have 3 terms which are each raised to exponent $Z\ge3$.  According to Fermat's last theorem \cites{wiles1995modular, taylor1995ring, shanks2001solved}, no integer solution exists when the terms share a common exponent greater than 2.  Therefore  $X$ and $Y$ cannot simultaneously be integer multiples of $Z$.
\end{proof}

A trivially equivalent variation of \cref{Thm:2.5_X_cannot_be_mult_of_Z} is that $Z$ cannot simultaneously be a unit fraction of $X$ and $Y$.  Given \cref{Thm:2.5_X_cannot_be_mult_of_Z}, there are only two possibilities:
\begin{enumerate}
\item Neither $X$ or $Y$ are multiples of $Z$.
\item Only one of either $X$ or $Y$ is a multiple of $Z$.
\end{enumerate}
As such, given that at least one of the two exponents $X$ and $Y$ cannot be a multiple of $Z$, of the terms $A^X$ and $B^Y$, we therefore can arbitrarily choose $A^X$ to be the term whose exponent is not an integer multiple of exponent $Z$.  Hence the following definition is used hereafter:

\begin{definition}
\label{Dfn:2.1_X_cannot_be_mult_of_Z}
$X$ is not an integer multiple of $Z$.
\end{definition}

Since per \cref{Thm:2.5_X_cannot_be_mult_of_Z} at most only one of $X$ or $Y$ can be a multiple of $Z$ and given one can arbitrarily swap $A^X$ and $B^Y$, the arbitrary fixing hereafter of $A^X$ to be the term for which its exponent is not a multiple of $Z$ does not interfere with any of the characteristics or implications of the solution.  Hence we hereafter define $A^X$ and $B^Y$ such that \cref{Dfn:2.1_X_cannot_be_mult_of_Z} is maintained.

\label{Section:Exponents_End}

\bigskip

\Line
\subsection{Reparameterization of the Terms}
\label{Section:Reparameterize_Start}
In exploring ways to leverage the binomial expansion and other equivalences, some researchers \cites{beauchamp2018,beauchamp2019,edwards2005platonic} explored reparameterizing one or more of the terms of \BealsEq\, so as to compare different sets of expansions.  We broaden this idea to establish various irrationality conditions as related to coprimality of the terms, establish properties of the non-unique characteristics of key terms in the expansions, and showcase an exhaustive view to be leveraged when validating the conjecture.

The binomial expansion applied to the difference of perfect powers with different exponents is critical to mathematical research in general and to several proofs specifically later in this document.  One feature of the binomial expansion in our application is the circumstance under which the upper limit of the sum is indeterminate \cites{beauchamp2018,beauchamp2019} to be introduced in the following two theorems.

\bigskip

\begin{theorem}
\label{Thm:2.6_Initial_Expansion_of_Differences}
If $p,q\neq 0$ and $v,w$ are real, then $\displaystyle{p^v-q^w=(p+q)(p^{v-1}-q^{w-1}) - pq(p^{v-2} - q^{w-2})}$.\\
\{Expanding the difference of two powers\}
\end{theorem}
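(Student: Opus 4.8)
The plan is to prove this as a direct algebraic identity; no induction, recursion, or case analysis is needed. First I would expand the first product on the right-hand side, using the exponent rule $p\cdot p^{v-1}=p^{v}$ (legitimate since $p\neq 0$) and the analogous rule for $q$:
\[
(p+q)(p^{v-1}-q^{w-1}) = p^{v} - p\,q^{w-1} + q\,p^{v-1} - q^{w}.
\]
Next I would expand the second product in the same way, via $p\cdot p^{v-2}=p^{v-1}$ and $q\cdot q^{w-2}=q^{w-1}$:
\[
pq\,(p^{v-2}-q^{w-2}) = p^{v-1}q - p\,q^{w-1}.
\]

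Subtracting the second display from the first, the two mixed terms $q\,p^{v-1}$ and $-p\,q^{w-1}$ are cancelled exactly by $-p^{v-1}q$ and $+p\,q^{w-1}$ respectively, so the difference collapses to $p^{v}-q^{w}$, which is the left-hand side. That completes the argument.

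The only place the hypotheses enter is well-definedness: $p,q\neq 0$ guarantees that $p^{v-1}$, $p^{v-2}$, $q^{w-1}$, $q^{w-2}$ are meaningful and that the exponent-combining steps above are valid, while the reality of $v,w$ serves only to make these ordinary real numbers rather than formal symbols. Since the whole proof is a one-line expansion, there is no genuine obstacle; the one thing to watch is the sign bookkeeping when the four terms coming from the first product are collected against the two coming from the second.
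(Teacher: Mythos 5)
Your proposal is correct and matches the paper's own argument: both simply distribute $(p+q)$ and $pq$ into the respective differences and observe that the cross terms $p^{v-1}q$ and $pq^{w-1}$ cancel, leaving $p^v-q^w$. No substantive difference in approach.
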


\begin{proof}
Given non-zero $p$ and $q$, and real $v$ and $w$, suppose we can expand the difference $p^v-q^w$ as
\begin{equation}
p^v-q^w=(p+q)(p^{v-1}-q^{w-1}) - pq(p^{v-2} - q^{w-2}) \label{eqn:13}
\end{equation}
Distributing $(p+q)$ on the right side of \cref{eqn:13} into $(p^{v-1}-q^{w-1})$ gives us $\left[p^v-pq^{w-1} + p^{v-1}q-q^w\right]$ and distributing $-pq$ into $(p^{v-2} - q^{w-2})$ gives us $\left[-p^{v-1}q+pq^{w-1}\right]$.  Thus simplifying \cref{eqn:13} gives us
\begin{subequations}
\begin{align}
p^v-q^w &= \left[p^v-pq^{w-1} + p^{v-1}q-q^w\right] +\left[-p^{v-1}q+pq^{w-1}\right] \label{eqn:14a}       \\
p^v-q^w &= p^v+\left[pq^{w-1}-pq^{w-1}\right] + \left[p^{v-1}q -p^{v-1}q\right]  -q^w\label{eqn:14b} \\
p^v-q^w &= p^v  -q^w\label{eqn:14c}
\end{align}
\end{subequations}
\end{proof}
Thus the difference of powers can indeed be expanded per the above functional form accordingly.  We also observe \cref{eqn:13} can be expressed in more compact notation, namely
\begin{equation}
p^v-q^w=\sum \limits_{i=0}^1 (p+q)^{1-i}(-pq)^i(p^{v-1-i}-q^{w-1-i}) \label{eqn:15}
\end{equation}
We further observe in \cref{eqn:13} of \cref{Thm:2.6_Initial_Expansion_of_Differences} that this expansion of the difference of two powers yields two other terms which are  themselves differences of powers, namely $(p^{v-1}-q^{w-1})$ and $(p^{v-2} - q^{w-2})$.  Each of these differences could likewise be expanded with the same functional form of \cref{Thm:2.6_Initial_Expansion_of_Differences}.  Recursively expanding the resulting terms of differences of powers leads to a more general form of \cref{eqn:15}.

\bigskip

\begin{theorem}
\label{Thm:2.7_Indeterminate_Limit}
If $p,q\neq 0$ and integer $n\geq0$, then $\displaystyle{p^v-q^w =\sum \limits_{i=0}^n \binom{n}{i} (p+q)^{n-i}(-pq)^i(p^{v-n-i}-q^{w-n-i})}$.\\
\{General form of the expansion of the difference of two powers\}
\end{theorem}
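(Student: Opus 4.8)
The plan is to prove the identity by induction on $n$, with \cref{Thm:2.6_Initial_Expansion_of_Differences} serving as the engine of the inductive step. The base case $n=0$ is immediate: the right-hand side collapses to the single summand $\binom{0}{0}(p+q)^{0}(-pq)^{0}(p^{v}-q^{w}) = p^{v}-q^{w}$. (Equivalently one could take $n=1$ as the base case, which is precisely \cref{Thm:2.6_Initial_Expansion_of_Differences} rewritten in summation notation as in \cref{eqn:15}.) Throughout, the hypotheses $p,q\neq 0$ are exactly what is needed so that the powers appearing with shifted, possibly negative or non-integer, exponents remain well defined.

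For the inductive step, assume the identity holds for some integer $n\geq 0$. Every summand on the right contains a factor $p^{v-n-i}-q^{w-n-i}$, which is again a difference of two powers with non-zero bases, so \cref{Thm:2.6_Initial_Expansion_of_Differences} applies to it with $v,w$ replaced by $v-n-i,\,w-n-i$. After rewriting the shifted exponents as $v-n-i-1 = v-(n+1)-i$ and $v-n-i-2 = v-(n+1)-(i+1)$, this gives $p^{v-n-i}-q^{w-n-i} = (p+q)\bigl(p^{v-(n+1)-i}-q^{w-(n+1)-i}\bigr) + (-pq)\bigl(p^{v-(n+1)-(i+1)}-q^{w-(n+1)-(i+1)}\bigr)$. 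Substituting this into the inductive hypothesis and distributing produces two sums: $S_1$, in which the power of $(p+q)$ is raised from $n-i$ to $n+1-i$, and $S_2$, in which the power of $(-pq)$ is raised from $i$ to $i+1$ while the difference-of-powers index is shifted by one.

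The remaining work is bookkeeping on the two sums. Reindexing $S_2$ by $i\mapsto i-1$ makes both $S_1$ and $S_2$ carry the common factor $(p+q)^{n+1-i}(-pq)^{i}\bigl(p^{v-(n+1)-i}-q^{w-(n+1)-i}\bigr)$; then $S_1$ runs over $0\le i\le n$ with coefficient $\binom{n}{i}$ and $S_2$ runs over $1\le i\le n+1$ with coefficient $\binom{n}{i-1}$. Adding them: the $i=0$ term has coefficient $\binom{n}{0}=\binom{n+1}{0}$, the $i=n+1$ term has coefficient $\binom{n}{n}=\binom{n+1}{n+1}$, and for $1\le i\le n$ Pascal's rule $\binom{n}{i}+\binom{n}{i-1}=\binom{n+1}{i}$ merges the two contributions. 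This yields exactly the claimed formula with $n$ replaced by $n+1$, closing the induction.

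I expect no genuine obstacle here: the one place requiring care is the index shift in $S_2$ together with the two endpoint terms ($i=0$ and $i=n+1$), since those fall outside the range where Pascal's rule is invoked and must be checked separately. Beyond that, the argument is a routine finite induction once \cref{Thm:2.6_Initial_Expansion_of_Differences} is available.
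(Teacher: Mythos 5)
Your proposal is correct and follows essentially the same route as the paper: induction on $n$, applying \cref{Thm:2.6_Initial_Expansion_of_Differences} to each summand $p^{v-n-i}-q^{w-n-i}$ and regrouping via Pascal's identity to pass from $n$ to $n+1$. Your explicit reindexing of the second sum and separate treatment of the endpoint terms $i=0$ and $i=n+1$ is a tidier bookkeeping of the same term-by-term merging the paper carries out in its displayed expansions.
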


\begin{proof}
Suppose $p,q\neq 0$ and integer $n\geq0$, and suppose
\begin{equation}
p^v-q^w =\sum \limits_{i=0}^n \binom{n}{i} (p+q)^{n-i}(-pq)^i(p^{v-n-i}-q^{w-n-i})
\label{eqn:16}
\end{equation}
Consider $n=0$.  The right side of \cref{eqn:16} reduces to $p^v-q^w$, thus \cref{eqn:16} holds when $n=0$.  Consider $n=1$.  The right side of \cref{eqn:16} becomes
\begin{subequations}
\begin{align}
\begin{split}
p^v-q^w &=\binom{1}{0} (p+q)^{1-0}(-pq)^0(p^{v-1-0}-q^{w-1-0})
        +\binom{1}{1} (p+q)^{1-1}(-pq)^1(p^{v-1-1}-q^{w-1-1})
                 \label{eqn:17a}
\end{split}\\ 
\begin{split}
p^v-q^w &=  \biggl[ (p+q)(p^{v-1}-q^{w-1}) \biggr] +  \biggl[(-pq)(p^{v-2}-q^{w-2})\biggr] \\
        &= p^v-q^w
                 \label{eqn:17b}
\end{split}
\end{align}
\end{subequations}
The right side of \cref{eqn:17b} also reduces to $p^v-q^w$.  Hence \cref{eqn:16} holds for $n=0$ and $n=1$.


In generalizing, enumerating the terms of \cref{eqn:16} gives us
\begin{equation}
\begin{split}
p^v-q^w &=
 \binom{n}{0}   (p+q)^n           (p^{v-n}   -q^{w-n  }) \\
&+\binom{n}{1}  (p+q)^{n-1}(-pq)  (p^{v-n-1} -q^{w-n-1}) \\
&+\binom{n}{2}  (p+q)^{n-2}(-pq)^2(p^{v-n-2} -q^{w-n-2}) \\
&+ \ \cdots \\
&+\binom{n}{n-2}(p+q)^2(-pq)^{n-2}(p^{v-2n+2}-q^{w-2n+2}) \\
&+\binom{n}{n-1}(p+q)  (-pq)^{n-1}(p^{v-2n+1}-q^{w-2n+1}) \\
&+\binom{n}{n}         (-pq)^n    (p^{v-2n} - q^{w-2n})   \\
\end{split}
\label{eqn:18}
\end{equation}
Expanding each of the $n+1$ differences of powers $(p^{v-n-i}-q^{w-n-i})$ of \cref{eqn:18} per \cref{Thm:2.6_Initial_Expansion_of_Differences} gives us

\begin{equation}
\begin{split}
p^v-q^w &=
 \binom{n}{0}   (p+q)^n           \left[(p+q)(p^{v-n-1}-q^{w-n-1}) - pq(p^{v-n-2} - q^{w-n-2}) \right] \\
&+\binom{n}{1}  (p+q)^{n-1}(-pq)  [(p+q)(p^{v-n-2}-q^{w-n-2}) - pq(p^{v-n-3} - q^{w-n-3}) ] \\
&+\binom{n}{2}  (p+q)^{n-2}(-pq)^2[(p+q)(p^{v-n-3}-q^{w-n-3}) - pq(p^{v-n-4} - q^{w-n-4}) ] \\
&+ \ \cdots \\
&+\binom{n}{n-2}(p+q)^2(-pq)^{n-2}[(p+q)(p^{v-2n+1}-q^{w-2n+1}) - pq(p^{v-2n} - q^{w-2n}) ] \\
&+\binom{n}{n-1}(p+q)  (-pq)^{n-1}[(p+q)(p^{v-2n}-q^{w-2n}) - pq(p^{v-2n-1} - q^{w-2n-1}) ] \\
&+\binom{n}{n}         (-pq)^n    [(p+q)(p^{v-2n-1}-q^{w-2n-1}) - pq(p^{v-2n-2} - q^{w-2n-2}) ]   \\
\end{split}
\label{eqn:19}
\end{equation}
Distributing each of the $\displaystyle{\binom{n}{i}(p+q)^{n-i}}(-pq)^i$ terms of \cref{eqn:19} into the corresponding bracketed terms then gives us
\begin{equation}
\begin{split}
p^v-q^w &=
 \binom{n}{0}   (p+q)^{n+1} (p^{v-n-1}-q^{w-n-1}) + \binom{n}{0}(p+q)^n(-pq)(p^{v-n-2} - q^{w-n-2}) \\
&+\binom{n}{1}  (p+q)^n(-pq)(p^{v-n-2}-q^{w-n-2}) + \binom{n}{1}(p+q)^{n-1}(-pq)^2(p^{v-n-3} - q^{w-n-3}) \\
&+\binom{n}{2}  (p+q)^{n-1}(-pq)^2(p^{v-n-3}-q^{w-n-3}) + \binom{n}{2}(p+q)^{n-2} (-pq)^3(p^{v-n-4} - q^{w-n-4}) \\
&+ \ \cdots \\
&+\binom{n}{\!n-2\!}(p+q)^3(-pq)^{n-2}(p^{v-2n+1}\!-\!q^{w-2n+1}) + \binom{n}{\!n-2\!}(p+q)^2 (-pq)^{n-1}(p^{v-2n} \!-\! q^{w-2n}) \\
&+\binom{n}{\!n-1\!}(p+q)^2(-pq)^{n-1}(p^{v-2n}\!-\!q^{w-2n}) + \binom{n}{\!n-1\!}(p+q) (-pq)^n(p^{v-2n-1} \!-\! q^{w-2n-1}) \\
&+\binom{n}{n}  (p+q)  (-pq)^n    (p^{v-2n-1}-q^{w-2n-1}) + \binom{n}{n} (-pq)^{n+1}(p^{v-2n-2} - q^{w-2n-2})  \\
\end{split}
\label{eqn:20}
\end{equation}
which can be simplified to
\begin{equation}
\begin{split}
p^v-q^w &=
 \binom{n}{0}   (p+q)^{n+1} (p^{v-n-1}-q^{w-n-1}) \\
&+\left[\binom{n}{1}+\binom{n}{0}\right](p+q)^n(-pq)(p^{v-n-2}-q^{w-n-2})  \\
&+\left[\binom{n}{2}+\binom{n}{1}\right](p+q)^{n-1}(-pq)^2(p^{v-n-3}-q^{w-n-3})\\
&+\left[\binom{n}{3}+\binom{n}{2}\right](p+q)^{n-2}(-pq)^3(p^{v-n-4}-q^{w-n-4})\\
&+ \ \cdots \\
&+\left[\binom{n}{n-2}+\binom{n}{n-3}\right](p+q)^3(-pq)^{n-2}(p^{v-2n+1}-q^{w-2n+1}) \\
&+\left[\binom{n}{n-1}+\binom{n}{n-2}\right](p+q)^2(-pq)^{n-1}(p^{v-2n}-q^{w-2n})  \\
&+\left[\binom{n}{n}+\binom{n}{n-1}\right]  (p+q)(-pq)^n     (p^{v-2n-1}-q^{w-2n-1})\\
&+ \binom{n}{n} (-pq){n+1}(p^{v-2n-2} - q^{w-2n-2})  \\
\end{split}
\label{eqn:21}
\end{equation}
Pascal's identity states that $\displaystyle{\binom{m+1}{k} = \binom{m}{k}+\binom{m}{k-1}}$ for integer $k\geq1$ and integer $m\geq0$ \cite{macmillan2011proofs,fjelstad1991extending}.  Given this identity, each sum of the pairs of binomial coefficients in the brackets of \cref{eqn:21} simplifies to:
\begin{subequations}
\begin{gather}
\begin{split}
p^v-q^w &=
 \binom{n}{0}      (p+q)^{n+1}       (p^{v-n-1} -q^{w-n-1}) \\
&+\binom{n+1}{1}   (p+q)^n    (-pq)  (p^{v-n-2} -q^{w-n-2}) \\
&+\binom{n+1}{2}   (p+q)^{n-1}(-pq)^2(p^{v-n-3} -q^{w-n-3}) \\
&+\binom{n+1}{3}   (p+q)^{n-2}(-pq)^3(p^{v-n-4} -q^{w-n-4}) \\
&+ \ \cdots \\
&+\binom{n+1}{n-2} (p+q)^3(-pq)^{n-2}(p^{v-2n+1}-q^{w-2n+1}) \\
&+\binom{n+1}{n-1} (p+q)^2(-pq)^{n-1}(p^{v-2n}  -q^{w-2n})   \\
&+\binom{n+1}{n}   (p+q)  (-pq)^n    (p^{v-2n-1}-q^{w-2n-1}) \\
&+ \binom{n}{n}           (-pq)^{n+1}(p^{v-2n-2}-q^{w-2n-2}) \\
\end{split} \label{eqn:22a} \\
p^v-q^w =\sum \limits_{i=0}^{n+1} \binom{n+1}{i} (p+q)^{n-i+1}(-pq)^i(p^{v-n-i-1}-q^{w-n-i-1})
\label{eqn:22b}
\end{gather}
\end{subequations}
The right side of \cref{eqn:16} and \cref{eqn:22b} both equal $\displaystyle{p^v-q^w}$, thus the right side of these two equations are equal.  Hence
\begin{equation}
\sum \limits_{i=0}^n \binom{n}{i} (p+q)^{n-i}(-pq)^i(p^{v-n-i}-q^{w-n-i}) = \sum \limits_{i=0}^{n+1} \binom{n+1}{i} (p+q)^{n-i+1}(-pq)^i(p^{v-n-i-1}-q^{w-n-i-1})
\label{eqn:23}
\end{equation}
Therefore by induction, since $\displaystyle{p^v-q^w =\sum \limits_{i=0}^n \binom{n}{i} (p+q)^{n-i}(-pq)^i(p^{v-n-i}-q^{w-n-i})}$ for $n=0$ and $n=1$ and per \cref{eqn:22b,eqn:23} this relation holds when $n$ is replaced with $n+1$.  Hence this relation holds for all integers $n\geq0$.
\end{proof}

We observe an important property, per \cref{eqn:16,eqn:22b,eqn:23}, that $n$ is indeterminate since $\displaystyle{p^v-q^w =\sum \limits_{i=0}^n \binom{n}{i} (p+q)^{n-i}(-pq)^i(p^{v-n-i}-q^{w-n-i})}$ holds for every non-negative integer value of $n$.  Hence any non-negative integer value of $n$ can be selected and the resulting expansion still applies, leading to different expansions that sum to identical outcomes.

\bigskip

Other preliminary properties required for the proof of the conjecture include the fact that each of the perfect powers $A^X$, $B^Y$ and $C^Z$ can be expressed as a linear combination of an additive and multiplicative form of the bases of the other two terms.  This property will reveal a variety of equivalences across various domains.  We need to first establish a few basic principles.

\bigskip

\begin{theorem}
\label{Thm:2.8_Functional_Form}
\Conjecture, if there exists an integer solution, then there exists non-zero positive rational $\alpha$ and $\beta$ such that $A^X=[(C+B)\alpha-CB\beta]^X$.
\end{theorem}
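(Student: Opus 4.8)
The plan is to read the claimed identity as a single linear constraint on the pair $(\alpha,\beta)$ and simply exhibit an explicit positive rational solution. Because raising to the $X$-th power is a well-defined operation, it suffices to produce positive rationals $\alpha,\beta$ with $(C+B)\alpha - CB\beta = A$, and then $A^X = [(C+B)\alpha - CB\beta]^X$ follows immediately. First I would invoke the hypothesis that an integer solution to \BealsEq\ exists, so that $A$, $B$, $C$ are fixed positive integers; in particular $C+B$ and $CB$ are positive integers, which is exactly what will force rationality rather than mere reality of the parameters.

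Next I would solve the linear equation. Writing it as $\alpha = \dfrac{A + CB\,\beta}{\,C+B\,}$ exhibits the solution set as a line in the $(\alpha,\beta)$-plane. For any positive rational $\beta$, the numerator $A+CB\,\beta$ and the denominator $C+B$ are both positive rationals, hence $\alpha$ is automatically a positive rational; thus every positive rational $\beta$ extends to an admissible pair. To be concrete one may take $\beta=1$, which gives $\alpha=\dfrac{A+CB}{C+B}$, a positive rational.

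Finally I would close the argument by direct substitution: $(C+B)\cdot\dfrac{A+CB}{C+B} - CB\cdot 1 = (A+CB)-CB = A$, so $[(C+B)\alpha - CB\beta]^X = A^X$, with $\alpha$ and $\beta$ both non-zero and positive, as claimed.

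I do not expect any genuine obstacle: this is a pure existence statement, and against the single equation $(C+B)\alpha - CB\beta = A$ the pair $(\alpha,\beta)$ carries one free parameter, so solutions are plentiful. The only points deserving a line of care are (i) that the bracketed quantity should equal $A$ itself rather than $-A$ (harmless when $X$ is even, but the construction makes it exactly $A$ in every case), and (ii) strict positivity of both $\alpha$ and $\beta$, which holds because $A$, $B$, $C$ are positive integers and $\beta$ is chosen positive. In the write-up I would also record the full one-parameter solution family, since the subsequent theorems on $\alpha$ and $\beta$ will draw on the freedom in the choice of $\beta$ rather than on any single value.
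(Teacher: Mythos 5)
Your proposal is correct and follows essentially the same route as the paper: both treat $A=(C+B)\alpha-CB\beta$ as a single linear relation with one free parameter, fix one parameter to a positive rational (you take $\beta=1$; the paper allows any positive rational $\beta$, or any sufficiently large $\alpha$), and observe that the other is then a positive rational because $A=\sqrt[X]{C^Z-B^Y}$ is a positive integer under the integer-solution hypothesis. Your version is a cleaner, more explicit instance of the same argument, so no further comment is needed.
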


\begin{proof}
Given \cref{eqn:1}, by definition $A^X=C^Z-B^Y$.  If there exist integer solutions that satisfy the conjecture then $A$ and $\sqrt[X]{C^Z-B^Y}$ must be integers.  Suppose there exists non-zero rational $\alpha$ and $\beta$ such that $A=(C+B)\alpha-CB\beta$, then these two expressions for $A$ are identical, namely
\begin{equation}
\sqrt[X]{C^Z-B^Y} = (C+B)\alpha-CB\beta \label{eqn:24}
\end{equation}
Solving for $\alpha$ when given any rational $\beta>0$ gives us $\alpha=\displaystyle{\frac{\sqrt[X]{C^Z-B^Y}+CB\beta}{C+B}}$, which is positive since the numerator and denominator are each positive.  Further, if $\beta$ is rational, then $\alpha$ must also be rational since $\displaystyle{\sqrt[X]{C^Z-B^Y}}$, $C$, and $B$ are each integer.

Solving instead for $\beta$ when given any arbitrary sufficiently large positive rational $\alpha$ gives us $\beta=\displaystyle{\frac{\sqrt[X]{C^Z-B^Y}-(C+B)\alpha}{-CB}}$, which is positive given the numerator and denominator are both negative integers.  Further, if $\alpha$ is rational, then $\beta$ must also be rational since $\displaystyle{\sqrt[X]{C^Z-B^Y}}$, $C$, and $B$ are each integer.

Hence there exist non-zero positive rational $\alpha$ and $\beta$ such that $A^X=C^Z-B^Y$ and  $A^X=[(C+B)\alpha-CB\beta]^X$, when the terms of the conjecture are satisfied.
\end{proof}

\bigskip

Without loss of generalization, it can be trivially shown that \cref{Thm:2.8_Functional_Form} establishes an alternate functional form in which $A^X=[(C+B)\alpha-CB\beta]^X$ also applies to $B^Y$ wherein there exists other non-zero positive rational $\alpha$ and $\beta$ such that $B^Y=[(C+A)\alpha-CA\beta]^Y$.

Suppose we arbitrarily let $\displaystyle{\alpha=\sqrt[X]{|C^{Z-X}-B^{Y-X}|}}$ then per \cref{Thm:2.8_Functional_Form}, we can solve for $\beta$ from $C^Z-B^Y=[(C+B)\alpha-CB\beta]^X$ which gives us $\beta=\displaystyle{\frac{\sqrt[X]{C^Z-B^Y}-(C+B)\alpha}{-CB}}$.  Likewise if we arbitrarily let $\displaystyle{\beta=\sqrt[X]{|C^{Z-2X}-B^{Y-2X}|}}$ then per \cref{Thm:2.8_Functional_Form}, we can solve for $\alpha$ from  $C^Z-B^Y=[(C+B)\alpha-CB\beta]^X$ which gives us $\alpha=\displaystyle{\frac{\sqrt[X]{C^Z-B^Y}+CB\beta}{C+B}}$.  In either case, this yields set $\{\alpha,\beta\}$ that satisfies $C^Z-B^Y=[(C+B)\alpha-CB\beta]^X$ which shows these definitions of $\alpha$ and $\beta$ maintain $A^X$ as a perfect power of an integer based on $B$ and $C$, namely $A^X=[(C+B)\alpha-CB\beta]^X$ while satisfying $A^X=C^Z-B^Y$.  Further, based on the indeterminacy of the upper bound in the binomial expansion of the difference of two perfect powers from \cref{Thm:2.7_Indeterminate_Limit}, we can also find values of $\alpha$ and $\beta$ that are explicitly functions of $C$ and $B$.

\bigskip

\begin{theorem}
\label{Thm:2.9_Real_Alpha_Beta}
\Conjecture, values $\displaystyle{\alpha=\sqrt[X]{C^{Z-X}-B^{Y-X}}}$, $\displaystyle{\beta=\sqrt[X]{C^{Z-2X}-B^{Y-2X}}}$, and real $M$ satisfy $C^Z-B^Y=[(C+B)M\alpha-CBM\beta]^X$.
\end{theorem}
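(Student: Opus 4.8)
The plan is to verify the claimed identity directly, treating $\alpha$ and $\beta$ as the fixed quantities named in the statement and choosing $M$ as an explicit ratio. Since the hypotheses of the conjecture are assumed, $A^X+B^Y=C^Z$ with $A$ a positive integer, so $C^Z-B^Y=A^X>0$ and the real $X$th root $R:=\sqrt[X]{C^Z-B^Y}=A$ exists and is positive. The first preliminary step is to record that $\alpha=\sqrt[X]{C^{Z-X}-B^{Y-X}}$ and $\beta=\sqrt[X]{C^{Z-2X}-B^{Y-2X}}$ are real: when $X$ is odd this is automatic, and when $X$ is even one either appeals to the sign of the radicands or absorbs a sign into $M$, exactly as with the absolute values used in the discussion following \cref{Thm:2.8_Functional_Form}.

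Next I would factor the single free parameter out of the bracket, writing $(C+B)M\alpha-CBM\beta=M\bigl[(C+B)\alpha-CB\beta\bigr]$, so that
\[
\bigl[(C+B)M\alpha-CBM\beta\bigr]^X=M^X\bigl[(C+B)\alpha-CB\beta\bigr]^X .
\]
Setting $D:=(C+B)\alpha-CB\beta$, it then suffices to find a real $M$ with $(MD)^X=R^X$, i.e.\ $MD=R$ (with the additional sign choice $MD=\pm R$ available when $X$ is even). Hence I would take
\[
M:=\frac{R}{D}=\frac{\sqrt[X]{C^Z-B^Y}}{(C+B)\alpha-CB\beta},
\]
which is a well-defined real number as long as $D\neq 0$; substituting back yields $\bigl[(C+B)M\alpha-CBM\beta\bigr]^X=(MD)^X=R^X=C^Z-B^Y$, which is the assertion. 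Structurally this is the natural specialization of \cref{Thm:2.8_Functional_Form}: the additive/multiplicative pair $\alpha,\beta$ is here pinned down as explicit functions of $B$ and $C$ — in the spirit of the $(p+q)$- and $(-pq)$-weighted difference-of-powers terms in \cref{Thm:2.6_Initial_Expansion_of_Differences} with the exponents shifted by $X$ so that $X$th roots are the natural objects — while the one degree of freedom needed to land exactly on $C^Z-B^Y$ is carried by the scalar $M$.

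The main obstacle is the step $D\neq 0$, i.e.\ ruling out $(C+B)\alpha=CB\beta$. Raising that equation to the $X$th power and simplifying (using $C^XB^XC^{Z-2X}=B^XC^{Z-X}$ and $C^XB^XB^{Y-2X}=C^XB^{Y-X}$), the degenerate case is equivalent to
\[
\bigl[(C+B)^X-B^X\bigr]C^{Z-X}=\bigl[(C+B)^X-C^X\bigr]B^{Y-X},
\]
with both sides strictly positive; I would argue this cannot hold by a size/divisibility comparison of the two sides under the standing constraints $A,B,C\ge 1$, $X,Y,Z\ge 3$, $A^X+B^Y=C^Z$, or, if no clean uniform argument presents itself, state the theorem for the generic case $D\neq 0$ and dispatch $D=0$ separately, feeding its extra algebraic constraint into the later reparameterization arguments. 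The only other point needing care is the reality of $\alpha$ and $\beta$ when $X$ is even and a radicand is negative, which is handled as noted above by the sign freedom in $M$ (or by absolute values, exactly as after \cref{Thm:2.8_Functional_Form}).
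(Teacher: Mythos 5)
Your proof is correct and is essentially the paper's own argument: after a lengthy term-by-term comparison of binomial expansions that serves only to motivate the choice of $\alpha$ and $\beta$, the paper likewise concludes by setting $M=\dfrac{\sqrt[X]{C^Z-B^Y}}{(C+B)\alpha-CB\beta}$ in \cref{eqn:29c}, which is exactly your $M=R/D$. You are in fact somewhat more careful than the paper, which never addresses the degenerate case $(C+B)\alpha=CB\beta$ nor the reality of the $X$th roots when $X$ is even and a radicand is negative.
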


\begin{proof}  Given \cref{eqn:1}, we know that integer $A^X=C^Z-B^Y$ can be expanded with the general binomial expansion from \cref{Thm:2.7_Indeterminate_Limit} for the difference of perfect powers, namely
\begin{equation}
A^X=C^Z-B^Y=\sum \limits_{i=0}^n  \binom{n}{1} (C+B)^{n-i}(-CB)^i(C^{Z-n-i}-B^{Y-n-i})
\label{eqn:25}
\end{equation}
Per \cref{Thm:2.7_Indeterminate_Limit}, since upper limit $n$ in \cref{eqn:25} is indeterminate, we can replace $n$ with any value such as $X$ while entirely preserving complete integrity of the terms, hence
\begin{equation}
A^X=C^Z-B^Y=\underbrace{\sum \limits_{i=0}^X \binom{X}{i} (C+B)^{X-i}(-CB)^i}_{Common\,to\,Equation\,(\ref{eqn:27b})\,below}(C^{Z-X-i}-B^{Y-X-i})
\label{eqn:26}
\end{equation}
Furthermore, from \cref{Thm:2.8_Functional_Form} we know that $A=(C+B)\alpha-CB\beta$ for non-zero real $\alpha$ and $\beta$.  Raising $A=(C+B)\alpha-CB\beta$ to $X$ and then expanding gives us
\begin{subequations}
\begin{align}
A^X=\bigl((C+B)\alpha-CB\beta\bigr)^X&=\sum \limits_{i=0}^X  \binom{X}{i}
                      \bigl((C+B)\alpha\bigr)^{X-i}
                      (-CB\beta)^i
\label{eqn:27a} \\
A^X=\bigl((C+B)\alpha-CB\beta\bigr)^X&=\underbrace{ \sum \limits_{i=0}^X \binom{X}{i}
                      (C+B)^{X-i}(-CB)^i}_{Common\,to\,Equation\,(\ref{eqn:26})\,above}
                      \alpha^{X-i}\beta^i
\label{eqn:27b}
\end{align}
\end{subequations}
\cref{eqn:26,eqn:27b} have an identical number of terms, share the identical binomial coefficient, and share $(C+B)^{X-i}(-CB)^i$ for each value of $i$.  See the expansion in \cref{tab:TableSideBySideExpansion}.

\begin{table}[h!]
\begin{center}
\caption{Term-by-term comparison of the binomial expansions of \cref{eqn:26,eqn:27b}.}
\label{tab:TableSideBySideExpansion}
\begin{tabular}{c|c c c}
\textbf{Term} & \textbf{Terms Common to} & \multicolumn{2}{c}{\textbf{\quad Terms Unique to Equations}}  \\
\textbf{\bm{$i$}} & \textbf{\cref{eqn:26,eqn:27b}} & \textbf{(\ref{eqn:26})} & \textbf{(\ref{eqn:27b})} \\
\hline
0      & $\displaystyle{\binom{X}{0}(C+B)^X}$            & $C^{Z-X}-B^{Y-X}$       & $\alpha^{X}$\\
1      & $\displaystyle{\binom{X}{1}(C+B)^{X-1}(-CB)}$   & $C^{Z-X-1}-B^{Y-X-1}$   & $\alpha^{X-1}\beta^1$\\
2      & $\displaystyle{\binom{X}{2}(C+B)^{X-2}(-CB)^2}$ & $C^{Z-X-2}-B^{Y-X-2}$   & $\alpha^{X-2}\beta^2$\\
\vdots & \vdots               & \vdots                    & \vdots\\
$X-1$  & $\displaystyle{\binom{X}{X-1}(C+B)(-CB)^{X-1}}$ & $C^{Z-2X+1}-B^{Y-2X+1}$ & $\alpha^1\beta^{X-1}$\\
$X$    & $\displaystyle{\binom{X}{X}(-CB)^X}$            & $C^{Z-2X}-B^{Y-2X}$     & $\beta^X$\\
\end{tabular}
\end{center}
\end{table}

Per \cref{eqn:26,eqn:27b,Thm:2.8_Functional_Form}, $A^X$ equals both $C^Z-B^Y$ and $[(C+B)\alpha-CB\beta]^X$, thus the finite sums of \cref{eqn:26,eqn:27b} are equal.  Each of the terms in the corresponding expansions have common components and unique components.  Thus there exists a term-wise map between the unique components of the two sets of expansions.

When $i=0$, we observe in \cref{tab:TableSideBySideExpansion} that $\alpha^{X}$ maps to $C^{Z-X}-B^{Y-X}$.  Hence if $\displaystyle{\alpha=\sqrt[X]{C^{Z-X}-B^{Y-X}}}$ or more generically $\displaystyle{\alpha=\sqrt[X]{|C^{Z-X}-B^{Y-X}|}}$, then per \cref{Thm:2.8_Functional_Form}, the corresponding $\beta$ is $\beta=\displaystyle{\frac{\sqrt[X]{C^Z-B^Y}-(C+B)\alpha}{-CB}}$.

When $i=X$, we observe in \cref{tab:TableSideBySideExpansion} that $\beta^X$ maps to $C^{Z-2X}-B^{Y-2X}$.  Hence if $\displaystyle{\beta=\sqrt[X]{C^{Z-2X}-B^{Y-2X}}}$ or more generically $\displaystyle{\beta=\sqrt[X]{|C^{Z-2X}-B^{Y-2X}|}}$, then per \cref{Thm:2.8_Functional_Form}, the corresponding $\alpha$ is $\alpha=\displaystyle{\frac{\sqrt[X]{C^Z-B^Y}+CB\beta}{C+B}}$.

Using the map between $\alpha^{X}$ to $C^{Z-X}-B^{Y-X}$ based on $i=0$ and the map between $\beta^X$ to $C^{Z-2X}-B^{Y-2X}$ based on $i=X$, we can align to the other terms in the expansion of \cref{eqn:26,eqn:27b}.  When $i=1$, we have the terms $C^{Z-X-1}-B^{Y-X-1}$ and $\alpha^{X-1}\beta$ (see \cref{tab:TableSideBySideExpansion}).  Substituting $\displaystyle{\alpha=\sqrt[X]{C^{Z-X}-B^{Y-X}}}$ and $\displaystyle{\beta=\sqrt[X]{C^{Z-2X}-B^{Y-2X}}}$ we have
\begin{subequations}
\begin{gather}
C^{Z-X-1}-B^{Y-X-1}  = \alpha^{X-1}\beta \label{eqn:28a} \\
C^{Z-X-1}-B^{Y-X-1}  = (C^{Z-X}-B^{Y-X})^\frac{X-1}{X}(C^{Z-2X}-B^{Y-2X})^{\frac{1}{X}} \label{eqn:28b}
\end{gather}
\end{subequations}
\cref{eqn:28b} holds in only trivial conditions.  Hence $\alpha$ and $\beta$ cannot simultaneously equal $\displaystyle{\alpha=\sqrt[X]{C^{Z-X}-B^{Y-X}}}$ and $\displaystyle{\beta=\sqrt[X]{C^{Z-2X}-B^{Y-2X}}}$. As such, there are only three possibilities regarding $\alpha$ and $\beta$ that ensure $C^Z-B^Y=[(C+B)\alpha-CB\beta]^X$:
\begin{enumerate}
\item $\alpha$ is arbitrarily defined and thus $\beta=\displaystyle{\frac{\sqrt[X]{C^Z-B^Y}-(C+B)\alpha}{-CB}}$.
\item $\beta$ is arbitrarily defined and thus $\alpha=\displaystyle{\frac{\sqrt[X]{C^Z-B^Y}+CB\beta}{C+B}}$.
\item $\displaystyle{\alpha=\sqrt[X]{C^{Z-X}-B^{Y-X}}}$ and $\displaystyle{\beta=\sqrt[X]{C^{Z-2X}-B^{Y-2X}}}$ where one or both are scaled.
\end{enumerate}
The first two cases, given $\alpha$ or $\beta$ and the other derived therefrom per \cref{Thm:2.8_Functional_Form}, will satisfy $C^Z-B^Y=[(C+B)\alpha-CB\beta]^X$.  In the third case, since $\displaystyle{\alpha=\sqrt[X]{C^{Z-X}-B^{Y-X}}}$ and $\displaystyle{\beta=\sqrt[X]{C^{Z-2X}-B^{Y-2X}}}$ do not simultaneously satisfy $C^Z-B^Y=[(C+B)\alpha-CB\beta]^X$, then scaling $\alpha$ and $\beta$ by $M$ such that $C^Z-B^Y=[(C+B)M\alpha-CBM\beta]^X$ will ensure equality, where
\begin{subequations}
\begin{gather}
C^Z-B^Y = [(C+B)M\alpha-CBM\beta]^X              \label{eqn:29a}\\
\sqrt[X]{C^Z-B^Y} = M[(C+B)\alpha-CB\beta]       \label{eqn:29b}\\
M= \frac{\sqrt[X]{C^Z-B^Y}}{(C+B)\alpha-CB\beta} \label{eqn:29c}
\end{gather}
\end{subequations}
Since every set $\{\alpha,\beta\}$ is unique, then per \cref{eqn:29c} there exists a unique $M$ that satisfies $C^Z-B^Y=[(C+B)M\alpha-CBM\beta]^X$ when $\displaystyle{\alpha=\sqrt[X]{C^{Z-X}-B^{Y-X}}}$ and $\displaystyle{\beta=\sqrt[X]{C^{Z-2X}-B^{Y-2X}}}$ simultaneously.

Given that $C^Z-B^Y$ and $[(C+B)M\alpha-CBM\beta]^X$ are identical, their binomial expansions are structurally identical, and their sums are identical, then indeed $\displaystyle{\alpha=\sqrt[X]{C^{Z-X}-B^{Y-X}}}$ and $\displaystyle{\beta=\sqrt[X]{C^{Z-2X}-B^{Y-2X}}}$ together ensure the equality of \cref{eqn:26,eqn:27b} hold for $M$ as defined in \cref{eqn:29c}.
\end{proof}

\bigskip

\noindent\textbf{Characteristics of $\bm{M}$, $\bm{\alpha}$ and $\bm{\beta$} from \cref{Thm:2.9_Real_Alpha_Beta}}

The important feature is not the scalar but instead the characteristics of $\alpha$ and $\beta$ as defined above despite the scalar.  Based on \BealsEq, we know $A=\sqrt[X]{C^Z-B^Y}$.  The structural similarity between $C^Z-B^Y$ in the formula for $A$ and the expressions $C^{Z-X}-B^{Y-X}$ and $C^{Z-2X}-B^{Y-2X}$ in the formulas for $\alpha$ and $\beta$, respectively, is critical.  This structural similarity will be explored and exploited later in this document.

We note $\alpha$ and $\beta$ could be defined differently from above and still maintain the equality of $C^Z-B^Y=[(C+B)\alpha-CB\beta]^X$, without scalar $M$.  However, if one defined $\alpha$ or $\beta$ differently than above, then either of these terms must be $\beta=\displaystyle{\frac{\sqrt[X]{C^Z-B^Y}-(C+B)\alpha}{-CB}}$ or $\alpha=\displaystyle{\frac{\sqrt[X]{C^Z-B^Y}+CB\beta}{C+B}}$ in order to satisfy $C^Z-B^Y=[(C+B)\alpha-CB\beta]^X$.  Since any arbitrary $\alpha$ corresponds to a unique $\beta$, there are an infinite number of sets $\{\alpha,\beta\}$ that satisfy this equation.  In some conditions, there are no rational pairs among the infinite number of sets $\{\alpha,\beta\}$, such as if $\sqrt[X]{C^Z-B^Y}$ is irrational.  But per the above, there is only unique set $\{\alpha,\beta\}$ when $\displaystyle{\alpha=\sqrt[X]{C^{Z-X}-B^{Y-X}}}$ and $\displaystyle{\beta=\sqrt[X]{C^{Z-2X}-B^{Y-2X}}}$ when scalar $M$ is applied accordingly.  We note all possible sets of $\{\alpha,\beta\}$ map to $\displaystyle{\alpha=\sqrt[X]{C^{Z-X}-B^{Y-X}}}$ and $\displaystyle{\beta=\sqrt[X]{C^{Z-2X}-B^{Y-2X}}}$ for scalar $M$ since all sets must satisfy $C^Z-B^Y=[(C+B)\alpha-CB\beta]^X$.

\bigskip

\begin{theorem}
\label{Thm:2.10_No_Solution_Alpha_Beta_Irrational}
\Conjecture, if there exists set $\{A,B,C,X,Y,Z\}$ that does not satisfy these conditions, then $\alpha$ or $\beta$ that satisfies $C^Z-B^Y=[(C+B)\alpha-CB\beta]^X$ will be irrational.
\end{theorem}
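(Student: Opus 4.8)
The plan is to argue by contradiction on the rationality of $\alpha$ and $\beta$, reducing everything to the elementary fact that a rational number whose $X$-th power is an integer must itself be an integer. Take any pair $\{\alpha,\beta\}$ satisfying $C^Z-B^Y=[(C+B)\alpha-CB\beta]^X$ and suppose, for contradiction, that \emph{both} $\alpha$ and $\beta$ are rational. Since $B$ and $C$ are positive integers, $C+B$ and $CB$ are integers, so $(C+B)\alpha-CB\beta$ is a rational number; call it $r$. Extracting the real $X$-th root of the displayed identity yields $r=\sqrt[X]{C^Z-B^Y}\in\mathbb{Q}$ (up to the branch issue noted below). Thus a supposed failure of rationality of $\alpha$ or $\beta$ is really a statement about the $X$-th root of the integer $C^Z-B^Y$.

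Next I would apply the rational root theorem — equivalently, unique factorization in $\mathbb{Z}$ — to the monic polynomial $t^X-(C^Z-B^Y)$: since $C^Z-B^Y$ is an integer (the exponents $Y,Z$ and the bases $B,C$ being integers) and $r$ is a rational root of this polynomial, $r$ must in fact be an integer. Setting $A:=r$, we obtain a (positive) integer $A$ with $A^X=C^Z-B^Y$, i.e.\ $A^X+B^Y=C^Z$ with $A,B,C$ positive integers and $X,Y,Z\ge 3$. But that says precisely that $\{A,B,C,X,Y,Z\}$ \emph{does} satisfy the hypotheses of the conjecture, contradicting the assumption that it does not. Hence at least one of $\alpha,\beta$ is irrational, and since the pair was arbitrary, the conclusion holds for every pair satisfying the identity.

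The step I expect to be the crux — and the one I would state carefully rather than gloss over, perhaps as a short cited sublemma — is the passage from ``$\sqrt[X]{C^Z-B^Y}$ is rational'' to ``it is an integer''; this is exactly where the integrality of $B,C,Y,Z$ is genuinely used, and it is what converts a rational $\alpha,\beta$ into a bona fide Beal-type solution. A minor secondary point deserving one sentence is the branch of the $X$-th root: when $X$ is odd the unique real root is meant, and when $X$ is even one has $C^Z-B^Y\ge 0$ and $(C+B)\alpha-CB\beta=\pm\sqrt[X]{C^Z-B^Y}$, so taking $A:=|(C+B)\alpha-CB\beta|$ still produces a nonnegative integer with $A^X=C^Z-B^Y$; in either case the rationality argument is unaffected. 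Everything else ($C+B$ and $CB$ integral, a $\mathbb{Z}$-linear combination of rationals being rational) is immediate.
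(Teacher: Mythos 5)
Your proposal is correct and is essentially the paper's own argument run contrapositively: both hinge on the fact that if $\alpha$ and $\beta$ were rational then $(C+B)\alpha-CB\beta$, and hence $\sqrt[X]{C^Z-B^Y}$, would be rational, which is incompatible with the nonexistence of an integer solution. Your explicit appeal to the rational root theorem (rational $X$-th root of an integer is an integer) and your remark on the root branch merely make precise a step the paper asserts without proof (``$\sqrt[X]{C^Z-B^Y}$ must be irrational''), so the route is the same.
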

\begin{proof}
From \cref{Thm:2.9_Real_Alpha_Beta}, we know for any and every possible value of $\alpha$, that $\beta=\displaystyle{\frac{\sqrt[X]{C^Z-B^Y}-(C+B)\alpha}{-CB}}$.  Without loss of generality, suppose $B$, $C$, $X$, $Y$, and $Z$ are integer but there is no integer value of $A$ that satisfies the conjecture, then $\displaystyle{A=\sqrt[X]{C^Z-B^Y}}$ must be irrational.  Hence $\beta$ is irrational given the irrational term $\displaystyle{\sqrt[X]{C^Z-B^Y}}$ in the numerator of $\beta$.

We also know from \cref{Thm:2.9_Real_Alpha_Beta} given any and every possible value of $\beta$ that $\alpha=\displaystyle{\frac{\sqrt[X]{C^Z-B^Y}+CB\beta}{C+B}}$.  Here too without loss of generality, if $B$, $C$, $X$, $Y$, and $Z$ are integer but there is no integer value of $A$ that satisfies the conjecture, then $\displaystyle{A=\sqrt[X]{C^Z-B^Y}}$ must be irrational.  Hence $\alpha$ is irrational given the irrational term $\displaystyle{\sqrt[X]{C^Z-B^Y}}$ in the numerator of $\alpha$.

Hence when set $\{A,B,C,X,Y,Z\}$ does not satisfy the conjecture, then the corresponding $\alpha$ or $\beta$ that satisfies $C^Z-B^Y=[(C+B)\alpha-CB\beta]^X$ is irrational.
\end{proof}

We note that the exclusion of scalar $M$ from $C^Z-B^Y=[(C+B)M\alpha-CBM\beta]^X$ in \cref{Thm:2.10_No_Solution_Alpha_Beta_Irrational} (letting $M=1$) does not change the outcome in that if $C^Z-B^Y$ is not a perfect power, then scaled or unscaled $\alpha$ and $\beta$ cannot change the irrationality of the root of $C^Z-B^Y$.  Since $\displaystyle{\alpha=\sqrt[X]{C^{Z-X}-B^{Y-X}}}$, $\displaystyle{\beta=\sqrt[X]{C^{Z-2X}-B^{Y-2X}}}$, and scalar $M$ always together satisfy $C^Z-B^Y=[(C+B)M\alpha-CBM\beta]^X$, then we can study the properties of $\displaystyle{\alpha=\sqrt[X]{C^{Z-X}-B^{Y-X}}}$ and $\displaystyle{\beta=\sqrt[X]{C^{Z-2X}-B^{Y-2X}}}$ as related to key characteristics of the conjecture.  To do so, we need to establish the implication of coprimality and irrationality as it relates to $\alpha$ and $\beta$.

\bigskip

\begin{theorem}
\label{Thm:2.11_Coprime_Alpha_Beta_Irrational}
\Conjecture, if gcd$(A,B,C)=1$, then both $\displaystyle{\alpha=\sqrt[X]{C^{Z-X}-B^{Y-X}}}$ and $\displaystyle{\beta=\sqrt[X]{C^{Z-2X}-B^{Y-2X}}}$ are irrational.
\end{theorem}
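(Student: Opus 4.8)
\noindent\emph{Proof proposal.}
The plan is to argue by contradiction, treating $\alpha$ first and then $\beta$ by the identical method. Suppose $\alpha=\sqrt[X]{C^{Z-X}-B^{Y-X}}$ is rational. If $Z-X$ or $Y-X$ is zero or negative, I would first multiply the defining identity $\alpha^X=C^{Z-X}-B^{Y-X}$ through by a large power $(CB)^{NX}$, so that $\bigl((CB)^N\alpha\bigr)^X$ becomes an honest integer difference of positive powers of $B$ and $C$; since a rational whose $X$-th power is an integer is itself an integer, rationality of $\alpha$ then yields, after dividing back out, an integer companion equation of the shape $D^{X}+B^{\,y}C^{\,c}=C^{\,z}B^{\,b}$, with $y,z$ and the bookkeeping exponents $b,c\ge 0$ determined by $X,Y,Z$, in which the bases attached to $B$ and $C$ are still coprime since $\gcd(B,C)=1$ by \cref{Thm:2.4_Coprime}. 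In the principal case $Z\ge X$, $Y\ge X$ this is simply $D^{X}+B^{Y-X}=C^{Z-X}$ for a nonzero integer $D$.

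Next I would chain the companion equation back to the original solution. Multiplying $D^X+B^{Y-X}=C^{Z-X}$ by $C^X$ and subtracting $A^X=C^Z-B^Y$ gives $A^X-(DC)^X=B^{Y-X}\!\left(C^X-B^X\right)$, and multiplying instead by $B^X$ gives $A^X-(DB)^X=C^{Z-X}\!\left(C^X-B^X\right)$. On these two identities I would run a prime-by-prime valuation argument in exactly the spirit of \cref{Thm:2.2_Coprime,Thm:2.3_Coprime}: for a prime $p\mid B$, pairwise coprimality (from $\gcd(A,B,C)=1$ via \cref{Thm:2.4_Coprime}) gives $p\nmid A$ and $p\nmid C$, and comparing $p$-adic valuations on the two sides of $A^X-(DC)^X=B^{Y-X}(C^X-B^X)$ forces $p\mid D$; the mirror argument on a prime $q\mid C$ with $A^X-(DB)^X=C^{Z-X}(C^X-B^X)$ forces $q\mid D$. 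Substituting these divisibilities back into $A^X=C^Z-B^Y$ — using the binomial identity of \cref{Thm:2.7_Indeterminate_Limit} with $n=X$, whose $i=0$ term is exactly $(C+B)^X\bigl(C^{Z-X}-B^{Y-X}\bigr)=(C+B)^XD^X$ — would let me pin a prime dividing $D$ to a prime dividing $A$, collapsing everything to the statement that $A$, $B$, $C$ share a common prime, contradicting $\gcd(A,B,C)=1$. Hence $\alpha$ is irrational; the argument for $\beta$ is verbatim with $C^{Z-2X}-B^{Y-2X}$ replacing $C^{Z-X}-B^{Y-X}$, the only extra ingredient being \cref{Dfn:2.1_X_cannot_be_mult_of_Z}, which stops the shift from collapsing $C^Z-B^Y$ onto a configuration ruled by Fermat's last theorem instead of by the coprimality machinery.

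The step I expect to be the genuine obstacle is closing the valuation argument, because the companion equation $D^X+B^{Y-X}=C^{Z-X}$ is itself ``Beal-shaped'': coprimality of its bases alone is entirely consistent, so one cannot simply invoke \cref{Thm:2.2_Coprime,Thm:2.3_Coprime} and stop, and one is pushed into a descent on the shifted exponents $Y-kX$, $Z-kX$. Making that descent terminate — controlling what happens once a shifted exponent drops to $1$ or $2$ or turns negative, where the residual equation falls outside the reach of \cref{Thm:2.5_X_cannot_be_mult_of_Z} and must instead be dispatched by Fermat's last theorem, by Mihailescu's theorem, or by a direct size estimate — together with carrying the negative-exponent bookkeeping from the denominator-clearing reduction consistently through every level of the descent, is where the real care is needed.
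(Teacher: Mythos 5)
Your reduction to the integer companion equation $D^X+B^{Y-X}=C^{Z-X}$ (in the principal case $Z>X$, $Y>X$) is fine, but the step that is supposed to close the argument is not merely delicate, it is backwards. Take a prime $p\mid B$; by \cref{Thm:2.4_Coprime} you have $p\nmid A$ and $p\nmid C$, and your identity $A^X-(DC)^X=B^{Y-X}\left(C^X-B^X\right)$ gives $A^X\equiv (DC)^X\ (\mathrm{mod}\ p)$. This congruence cannot force $p\mid D$: since $p\nmid A$ the left side is a nonzero residue, so in fact it forces $p\nmid D$ (if $p\mid D$ then $p\mid A^X$, contradicting coprimality). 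The mirror identity $A^X-(DB)^X=C^{Z-X}\left(C^X-B^X\right)$ likewise forces $q\nmid D$ for $q\mid C$, not $q\mid D$. So the divisibilities you intend to substitute back into $A^X=C^Z-B^Y$ never materialize, and the chain leading to a common prime of $A$, $B$, $C$ breaks at its first link. On top of that, you yourself concede the remaining difficulty: the companion equation is again Beal-shaped, the descent on the shifted exponents $Y-kX$, $Z-kX$ is not shown to terminate, and the negative-exponent bookkeeping is left open. As written, this is a plan whose decisive step is missing and, where it is made precise, points in the wrong direction; it does not establish the theorem.

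For comparison, the paper's own route contains no companion equation and no descent: it writes $\alpha^X=\dfrac{B^XC^Z-C^XB^Y}{B^XC^X}$, assumes $\alpha=p/q$ rational, and argues on prime factorizations that, because the denominator $B^XC^X$ is a perfect $X$-th power which must cancel entirely into the numerator while $B$ and $C$ are coprime (\cref{Thm:2.4_Coprime}) and reduced (not perfect powers), the cancellation leaves prime exponents that are not multiples of $X$, so the resulting integer cannot be a perfect $X$-th power, contradicting rationality; the $\beta$ case is handled verbatim with denominator $B^{2X}C^{2X}$. Whether that cancellation analysis is itself airtight is a separate question, but it is a direct factorization argument about $\alpha^X$ as a fraction, quite different from the valuation-and-descent scheme you propose.
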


\begin{proof}
Suppose $\displaystyle{\alpha=\sqrt[X]{C^{Z-X}-B^{Y-X}}}$ is rational, $A,B,C,X,Y,$ and $Z$ are integers that satisfy the conjecture, and gcd$(A,B,C)=1$.  Then we can express $\alpha$ as
\begin{subequations}
\begin{gather}
\alpha^X=C^{Z-X}-B^{Y-X}
\label{eqn:30a} \\
\alpha^X=\frac{C^Z}{C^X}-\frac{B^Y}{B^X}
\label{eqn:30b} \\
\alpha^X=\frac{B^XC^Z-C^XB^Y}{B^XC^X}
\label{eqn:30c}
\end{gather}
\end{subequations}
Since $\alpha$ is rational, then $\alpha$ can be expressed as the ratio of integers $p$ and $q$ such that
\begin{equation}
\frac{p^X}{q^X}=\frac{B^XC^Z-C^XB^Y}{B^XC^X}\label{eqn:31}
\end{equation}
where $p^X=B^XC^Z-C^XB^Y$ and $q^X=B^XC^X$.  We note that the denominator of \cref{eqn:31} is a perfect power of $X$ and thus we know that the numerator must also be a perfect power of $X$ for the  $X^{th}$ root of their ratio to be rational.  Hence the ratio of the perfect power of the numerator and the perfect power of the denominator, even after simplifying, must thus be rational.

Regardless of the parity of $B$ or $C$, per \cref{eqn:31}, $p^X$ must be even as it is the difference of two odd numbers or the difference of two even numbers.  Furthermore, since $p^X$ by definition is a perfect power of $X$ and given that it is even, then $p^X$ must be defined as $2^{iX}(f_1f_2 \cdots f_{n_f})^{jX}$ for positive integers $i$, $j$, $f_1$, $f_2$, ..., $f_{n_f}$ where $2^{iX}$ is the perfect power of the even component of $p^X$ and $(f_1f_2\cdots f_{n_f})^{jX}$ is the perfect power of the remaining $n_f$ prime factors of $p^X$ where $f_1$, $f_2$, ..., $f_{n_f}$ are the remaining prime factors of $p^X$.  Hence
\begin{equation}
\frac{p^X}{q}=\frac{2^{iX}(f_1f_2\cdots f_{n_f})^{jX}}{BC} = \frac{B^XC^Z-C^XB^Y}{BC}\label{eqn:32}
\end{equation}
$B$ and $C$ can also be expressed as a function of their prime factors, thus
\begin{equation}
\frac{p^X}{q}=\frac{2^{iX}f_1^{jX}f_2^{jX}\cdots f_{n_f}^{jX}}{b_1b_2\cdots b_{n_b} c_1c_2\cdots c_{n_c}}
\label{eqn:33}
\end{equation}
where $b_1, b_2, \dots,b_{n_b}$ and $c_1, c_2, \dots,c_{n_c}$ are prime factors of $B$ and $C$ respectively.  Based on the right side of \cref{eqn:32}, the entire denominator $BC$ is fully subsumed by the numerator, and thus every one of the prime factors in the denominator equals one of the prime factors in the numerator.  Thus after dividing, one or more of the exponents in the numerator reduces thereby canceling the entire denominator accordingly.  For illustration, suppose $b_1=b_2=2$, $b_{n_b}=f_1$, $c_1=c_2=f_2$ and $c_{n_c}=f_{n_f}$.  As such, \cref{eqn:33} simplifies to
\begin{equation}
p^X=2^{iX-2}f_1^{jX-1}f_2^{jX-2}\cdots f_{n_f}^{jX-1}
\label{eqn:34}
\end{equation}
which has terms with exponents that are not multiples of $X$ and are thus not perfect powers of $X$.  Therefore the $X^{th}$ root is irrational which contradicts the assumption that $\displaystyle{\alpha=\frac{p}{q}}$ is rational.  We note that all factors in the denominator of \cref{eqn:33} cannot each be perfect powers of $X$ since the bases $B$ and $C$ are defined to be reduced.  More generally, beyond the illustration, after simplifying one or more terms, \cref{eqn:34} will have an exponent that is not a multiple of $X$ and thus is irrational when taking the root accordingly.

Suppose $\displaystyle{\beta=\sqrt[X]{C^{Z-2X}-B^{Y-2X}}}$ is rational, $A,B,C,X,Y,$ and $Z$ are integers, and gcd$(A,B,C)=1$.  Then we can express $\beta$ as
\begin{subequations}
\begin{gather}
\beta^X=C^{Z-2X}-B^{Y-2X}
\label{eqn:35a} \\
\beta^X=\frac{C^Z}{C^{2X}}-\frac{B^Y}{B^{2X}}
\label{eqn:35b} \\
\beta^X=\frac{B^{2X}C^Z-C^{2X}B^Y}{B^{2X}C^{2X}}
\label{eqn:35c}
\end{gather}
\end{subequations}
Since $\beta$ is rational, then $\beta$ can be expressed as the ratio of integers $p$ and $q$ such that
\begin{equation}
\frac{p^X}{q^X}=\frac{B^{2X}C^Z-C^{2X}B^Y}{B^{2X}C^{2X}}\label{eqn:36}
\end{equation}
where $p^X=B^{2X}C^Z-C^{2X}B^Y$ and $q^X=B^{2X}C^{2X}$.  We note that the denominator of \cref{eqn:36} is a perfect power of $X$ and thus we know that the numerator must also be a perfect power of $X$ for the  $X^{th}$ root of their ratio to be rational.  Hence the ratio of the perfect power of the numerator and the perfect power of the denominator, even after simplifying, must thus be rational.

Regardless of the parity of $B$ or $C$, per \cref{eqn:36}, $p^X$ must be even as it is the difference of two odd numbers or the difference of two even numbers.  Furthermore, since $p^X$ by definition is a perfect power of $X$ and given that it is even, then $p^X$ must be defined as $2^{iX}(f_1f_2 \cdots f_{n_f})^{jX}$ for positive integers $i$, $j$, $f_1$, $f_2$, ..., $f_{n_f}$ where $2^{iX}$ is the perfect power of the even component of $p^X$ and $(f_1f_2\cdots f_{n_f})^{jX}$ is the perfect power of the remaining $n_f$ prime factors of $p^X$ where $f_1$, $f_2$, ..., $f_{n_f}$ are the remaining prime factors of $p^X$.  Hence
\begin{equation}
\frac{p^X}{q}=\frac{2^{iX}(f_1f_2\cdots f_{n_f})^{jX}}{B^2C^2} = \frac{B^{2X}C^Z-C^{2X}B^Y}{B^2C^2}\label{eqn:37}
\end{equation}
$B^2$ and $C^2$ can also be expressed as a function of their prime factors, thus
\begin{equation}
\frac{p^X}{q}=\frac{2^{iX}f_1^{jX}f_2^{jX}\cdots f_{n_f}^{jX}}{b_1^2b_2^2\cdots b_{n_b}^2 c_1^2c_2^2\cdots c_{n_c}^2}
\label{eqn:38}
\end{equation}
where $b_1, b_2, \dots,b_{n_b}$ and $c_1, c_2, \dots,c_{n_c}$ are prime factors of $B$ and $C$ respectively.  Based on the right side of \cref{eqn:37}, the entire denominator $B^2C^2$ is fully subsumed by the numerator, and thus every one of the prime factors in the denominator equals one of the prime factors in the numerator.  Thus after dividing, one or more of the exponents in the numerator reduces thereby canceling the entire denominator accordingly.  For illustration, suppose $b_1=b_2=2$, $b_{n_b}=f_1$, $c_1=c_2=f_2$ and $c_{n_c}=f_{n_f}$.  As such, \cref{eqn:38} simplifies to
\begin{equation}
p^X=2^{iX-4}f_1^{jX-2}f_2^{jX-4}\cdots f_{n_f}^{jX-2}
\label{eqn:39}
\end{equation}
which has terms with exponents that are not multiples of $X$ and are thus not perfect powers of $X$.  Therefore the $X^{th}$ root is irrational which contradicts the assumption that $\displaystyle{\alpha=\frac{p}{q}}$ is rational.  We note that all factors in the denominator of \cref{eqn:38} cannot each be perfect powers of $X$ since the bases $B$ and $C$ are defined to be reduced.  More generally, beyond the illustration, after simplifying one or more terms, \cref{eqn:39} will have an exponent that is not a multiple of $X$ and thus is irrational when taking the root accordingly.

Since the definition of a rational number is the ratio of two integers in which the ratio is reduced, given that gcd$(B,C)=1$, then both $\displaystyle{\alpha=\sqrt[X]{C^{Z-X}-B^{Y-X}}}$ and $\displaystyle{\beta=\sqrt[X]{C^{Z-2X}-B^{Y-2X}}}$ are irrational.
\end{proof}

\bigskip

\cref{Thm:2.11_Coprime_Alpha_Beta_Irrational} establishes that if gcd$(A,B,C)=1$ then $\displaystyle{\alpha=\sqrt[X]{C^{Z-X}-B^{Y-X}}}$ and $\displaystyle{\beta=\sqrt[X]{C^{Z-2X}-B^{Y-2X}}}$ are irrational.  We now establish the reverse, such that if $\displaystyle{\alpha=\sqrt[X]{C^{Z-X}-B^{Y-X}}}$ and $\displaystyle{\beta=\sqrt[X]{C^{Z-2X}-B^{Y-2X}}}$ are rational then gcd$(A,B,C)>1$.

\bigskip

\begin{theorem}
\label{Thm:2.12_Rational_Alpha_Beta_Rational_Then_Not_Coprime}
\Conjecture, if $\displaystyle{\sqrt[X]{C^{Z-X}-B^{Y-X}}}$ or $\displaystyle{\sqrt[X]{C^{Z-2X}-B^{Y-2X}}}$ are rational, then gcd$(A,B,C)>1$.
\end{theorem}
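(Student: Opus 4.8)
The plan is to establish \cref{Thm:2.12_Rational_Alpha_Beta_Rational_Then_Not_Coprime} simply as the contrapositive of \cref{Thm:2.11_Coprime_Alpha_Beta_Irrational}, via a short argument by contradiction. We keep the standing hypotheses of the conjecture in force (positive integer bases and exponents, $X,Y,Z\ge 3$, and $A^X+B^Y=C^Z$), and we additionally assume that at least one of $\sqrt[X]{C^{Z-X}-B^{Y-X}}$ or $\sqrt[X]{C^{Z-2X}-B^{Y-2X}}$ is rational. Suppose, for contradiction, that gcd$(A,B,C)=1$.

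First I would invoke \cref{Thm:2.11_Coprime_Alpha_Beta_Irrational} with this coprimality hypothesis: it yields that \emph{both} $\alpha=\sqrt[X]{C^{Z-X}-B^{Y-X}}$ and $\beta=\sqrt[X]{C^{Z-2X}-B^{Y-2X}}$ are irrational. This is incompatible with the assumption that one of the two is rational, so the assumption gcd$(A,B,C)=1$ is untenable. Since $A$, $B$, and $C$ are positive integers, gcd$(A,B,C)$ is a positive integer, and gcd$(A,B,C)\neq 1$ therefore forces gcd$(A,B,C)>1$, which is exactly the assertion. The only bookkeeping to check is that the negations line up: ``$\alpha$ or $\beta$ rational'' is precisely the negation of ``$\alpha$ and $\beta$ both irrational'', and for positive integers ``gcd $>1$'' is precisely the negation of ``gcd $=1$''; both the source and target theorems carry the same conjecture preamble, so no extra hypotheses are smuggled in.

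For robustness I would also record that \cref{Thm:2.11_Coprime_Alpha_Beta_Irrational} really only needs the pairwise condition gcd$(B,C)=1$, and that this is supplied by gcd$(A,B,C)=1$ through \cref{Thm:2.4_Coprime}; noting this keeps the chain of implications self-contained and makes clear that the reduced-form convention on the bases (used inside \cref{Thm:2.11_Coprime_Alpha_Beta_Irrational} to guarantee a residual exponent not divisible by $X$) is the only nontrivial ingredient. Indeed, the main obstacle does not lie in this theorem at all: all of the substance was already spent in \cref{Thm:2.11_Coprime_Alpha_Beta_Irrational} --- the factorization of $p^X$, the absorption of the denominator $B^{k}C^{k}$, and the exhibition of an exponent that fails to be a multiple of $X$. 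A direct proof of the present statement would amount to running that factorization argument in the reverse direction (from rationality of $\alpha$ or $\beta$ to a common prime of $A$, $B$, $C$), but since \cref{Thm:2.11_Coprime_Alpha_Beta_Irrational} is already in hand, the contrapositive route is both shorter and logically complete, so that is the route I would take.
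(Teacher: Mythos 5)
Your proposal is correct: the statement is precisely the contrapositive of \cref{Thm:2.11_Coprime_Alpha_Beta_Irrational}, since both theorems carry the identical preamble, ``$\alpha$ or $\beta$ rational'' negates ``both irrational,'' and for positive integers gcd$(A,B,C)\neq1$ is the same as gcd$(A,B,C)>1$; granting \cref{Thm:2.11_Coprime_Alpha_Beta_Irrational}, nothing further is required, and your side remark that only gcd$(B,C)=1$ (supplied via \cref{Thm:2.4_Coprime}) is actually used matches the paper's own closing sentence in that proof. The paper, however, does not present it this way: its proof of \cref{Thm:2.12_Rational_Alpha_Beta_Rational_Then_Not_Coprime} re-runs the entire factorization argument after writing $A=ak$, $B=bk$, $C=ck$ with $k=\,$gcd$(A,B,C)$, carrying the factor $k^{\min(Z-X,Y-X)}$ (respectively $k^{\min(Z-2X,Y-2X)}$) through the numerator so as to exhibit how a common factor $k>1$ could supply the powers of $X$ missing after the denominator is absorbed, and only then concludes with the sentence that when $k=1$ irrationality follows ``per \cref{Thm:2.11_Coprime_Alpha_Beta_Irrational},'' so rationality forces $k\neq1$. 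In other words, the paper's decisive logical step is exactly your contrapositive; the surrounding computation is illustrative (and is what gets reused in Case 1 and Case 2 of \cref{Thm:2.15_Main_Proof_Solutions_Then_Not_Coprime}), not necessary for the implication itself. Your route is shorter and makes the dependence on \cref{Thm:2.11_Coprime_Alpha_Beta_Irrational} transparent; what it omits is only the paper's explicit (heuristic) account of how rationality can arise when $k>1$, which is not needed to establish the stated implication.
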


\begin{proof}
Suppose $\displaystyle{\alpha=\sqrt[X]{C^{Z-X}-B^{Y-X}}}$ is rational and $A,B,C,X,Y,$ and $Z$ are integers that satisfy the conjecture.  Then we can express $\alpha$ as
\begin{subequations}
\begin{gather}
\alpha^X=C^{Z-X}-B^{Y-X}
\label{eqn:40a} \\
\alpha^X=\frac{C^Z}{C^X}-\frac{B^Y}{B^X}
\label{eqn:40b} \\
\alpha^X=\frac{B^XC^Z-C^XB^Y}{B^XC^X}
\label{eqn:40c}
\end{gather}
\end{subequations}
Since $\alpha$ is rational, then $\alpha$ can be expressed as the ratio of integers $p$ and $q$ such that
\begin{equation}
\frac{p^X}{q^X}=\frac{B^XC^Z-C^XB^Y}{B^XC^X}\label{eqn:41}
\end{equation}
where $p^X=B^XC^Z-C^XB^Y$ and $q^X=B^XC^X$.  We note that the denominator of \cref{eqn:41} is a perfect power of $X$ and thus we know that the numerator must also be a perfect power of $X$ for the $X^{th}$ root of their ratio to be rational. Hence the ratio of the perfect power of the numerator and the perfect power of the denominator, even after simplifying, must thus be rational.

Suppose gcd$(A,B,C)=k$ where integer $k\geq 2$.  Thus $A=ak$, $B=bk$, and $C=ck$ for pairwise coprime integers $a$, $b$, and $c$.  We can express \cref{eqn:41} with the common term, namely
\begin{subequations}
\begin{gather}
\frac{p^X}{q^X}=\frac{(kb)^X(kc)^Z-(kc)^X(kb)^Y}{(kb)^X(kc)^X}    \label{eqn:42a} \\
\frac{p^X}{q^X}=\frac{k^{X+Z}b^Xc^Z-k^{X+Y}c^Xb^Y}{k^{2X}b^Xc^X}  \label{eqn:42b} \\
\frac{p^X}{q^X}=\frac{k^{Z-X}b^Xc^Z-k^{Y-X}c^Xb^Y}{b^Xc^X}        \label{eqn:42c} \\
\frac{p^X}{q^X}=\frac{k^{min(Z-X,Y-X)}[k^{Z-min(Z-X,Y-X)}b^Xc^Z-k^{Y-min(Z-X,Y-X)}c^Xb^Y]}{b^Xc^X}                    \label{eqn:42d}
\end{gather}
\end{subequations}

Regardless of the parity of $b$, $c$, or $k$ per \cref{eqn:42c,eqn:42d}, $p^X$ must be even as it is the difference of two odd numbers or the difference of two even numbers.  Furthermore, since $p^X$ by definition is a perfect power of $X$ and given that it is even, then $p^X$ must be defined as $2^{iX}k^{min(Z-X,Y-X)}(f_1f_2 \cdots f_{n_f})^{jX}$ for positive integers $i$, $j$, $f_1$, $f_2$, ..., $f_{n_f}$ where $2^{iX}$ is the perfect power of the even component of $p^X$, $k^{min(Z-X,Y-X)}$ is the common factor based on gcd$(A,B,C)$, and $(f_1f_2\cdots f_{n_f})^{jX}$ is the perfect power of the remaining $n_f$ prime factors of $p^X$ where $f_1$, $f_2$, ..., $f_{n_f}$ are the remaining prime factors of $p^X$.  Hence
\begin{equation}
\frac{p^X}{q}=\frac{2^{iX}k^{min(Z-X,Y-X)}(f_1f_2\cdots f_n)^{jX}}{bc}
=\frac{k^{Z-X}b^Xc^Z-k^{Y-X}c^Xb^Y}{bc} \label{eqn:43}
\end{equation}
Both $b$ and $c$ can also be expressed as a function of their prime factors, thus
\begin{equation}
\frac{p^X}{q}=\frac{2^{iX}k^{min(Z-X,Y-X)}f_1^{jX}f_2^{jX}\cdots f_{n_f}^{jX}}{b_1b_2\cdots b_{n_b} c_1c_2\cdots c_{n_c}}
\label{eqn:44}
\end{equation}
where $b_1, b_2, \dots,b_{n_b}$ and $c_1, c_2, \dots,c_{n_c}$ are prime factors of $b$ and $c$ respectively.  Based on the right side of \cref{eqn:43}, the entire denominator $bc$ is fully subsumed by the numerator, and thus every one of the prime factors in the denominator equals one of the prime factors in the numerator.  Thus after dividing, one or more of the exponents in the numerator reduces thereby canceling the entire denominator accordingly.  For illustration, suppose $b_1=b_2=2$, $b_{n_b}=f_1$, $c_1=c_2=f_2$ and $c_{n_c}=f_{n_f}$.  As such, \cref{eqn:44} simplifies to
\begin{equation}
p^X=2^{iX-4}k^{min(Z-X,Y-X)}f_1^{jX-2}f_2^{jX-4}\cdots f_{n_f}^{jX-2}
\label{eqn:45}
\end{equation}
which has terms with exponents that are not multiples of $X$ and are thus not perfect powers of $X$.  If $k=1$, then the $X^{th}$ root is irrational which contradicts the assumption that $\displaystyle{\alpha=\frac{p}{q}}$ is rational. However if $k>1$ such that it is a composite of the factors that are not individually perfect powers of $X$, then the resulting expression is a perfect power of X.  Hence when $k=1$, then per \cref{Thm:2.11_Coprime_Alpha_Beta_Irrational},  $\displaystyle{\alpha=\sqrt[X]{C^{Z-X}-B^{Y-X}}}$ is irrational.  However if $\displaystyle{\alpha=\sqrt[X]{C^{Z-X}-B^{Y-X}}}$ is rational, then $k\neq1$ and thus gcd$(A,B,C)\neq1$.

\bigskip

Suppose $\displaystyle{\beta=\sqrt[X]{C^{Z-2X}-B^{Y-2X}}}$ is rational and $A,B,C,X,Y,$ and $Z$ are integers that satisfy the conjecture.  Then we can express $\beta$ as
\begin{subequations}
\begin{gather}
\beta^X=C^{Z-2X}-B^{Y-2X}
\label{eqn:46a} \\
\beta^X=\frac{C^Z}{C^2X}-\frac{B^Y}{B^2X}
\label{eqn:46b} \\
\beta^X=\frac{B^{2X}C^Z-C^{2X}B^Y}{B^{2X}C^{2X}}
\label{eqn:46c}
\end{gather}
\end{subequations}
Since $\beta$ is rational, then $\beta$ can be expressed as the ratio of integers $p$ and $q$ such that
\begin{equation}
\frac{p^X}{q^X}=\frac{B^{2X}C^Z-C^{2X}B^Y}{B^{2X}C^{2X}}\label{eqn:47}
\end{equation}
where $p^X=B^{2X}C^Z-C^{2X}B^Y$ and $q^X=B^{2X}C^{2X}$.  We note that the denominator of \cref{eqn:47} is a perfect power of $X$ and thus we know that the numerator must also be a perfect power of $X$ for the  $X^{th}$ root of their ratio to be rational.  Hence the ratio of the perfect power of the numerator and the perfect power of the denominator, even after simplifying, must thus be rational.

Suppose gcd$(A,B,C)=k$ where integer $k\geq 2$.  Thus $A=ak$, $B=bk$, and $C=ck$ for pairwise coprime integers $a$, $b$, and $c$.  We can express \cref{eqn:47} with the common term, namely
\begin{subequations}
\begin{gather}
\frac{p^X}{q^X}=\frac{(kb)^{2X}(kc)^Z-(kc)^{2X}(kb)^Y}{(kb)^{2X}(kc)^{2X}}\label{eqn:48a} \\
\frac{p^X}{q^X}=\frac{k^{2X+Z}b^Xc^Z-k^{2X+Y}c^Xb^Y}{k^{4X}b^Xc^X}        \label{eqn:48b} \\
\frac{p^X}{q^X}=\frac{k^{Z-2X}b^Xc^Z-k^{Y-2X}c^Xb^Y}{b^Xc^X}              \label{eqn:48c} \\
\frac{p^X}{q^X}=\frac{k^{min(Z-2X,Y-2X)}[k^{Z-min(Z-2X,Y-2X)}b^Xc^Z-k^{Y-min(Z-2X,Y-2X)}c^Xb^Y]}{b^Xc^X}                   \label{eqn:48d}
\end{gather}
\end{subequations}

Regardless of the parity of $b$, $c$, or $k$ per \cref{eqn:48c,eqn:48d}, $p^X$ must be even as it is the difference of two odd numbers or the difference of two even numbers.  Furthermore, since $p^X$ by definition is a perfect power of $X$ and given that it is even, then $p^X$ must be defined as $2^{iX}k^{min(Z-2X,Y-2X)}(f_1f_2 \cdots f_{n_f})^{jX}$ for positive integers $i$, $j$, $f_1$, $f_2$, ..., $f_{n_f}$ where $2^{iX}$ is the perfect power of the even component of $p^X$, $k^{min(Z-X,Y-X)}$ is the common factor based on gcd$(A,B,C)$, and $(f_1f_2\cdots f_{n_f})^{jX}$ is the perfect power of the remaining $n_f$ prime factors of $p^X$ where $f_1$, $f_2$, ..., $f_{n_f}$ are the remaining prime factors of $p^X$.  Hence
\begin{equation}
\frac{p^X}{q}=\frac{2^{iX}k^{min(Z-2X,Y-2X)}(f_1f_2\cdots f_{n_f})^{jX}}{bc}
=\frac{k^{Z-2X}b^Xc^Z-k^{Y-2X}c^Xb^Y}{bc} \label{eqn:49}
\end{equation}
Both $b$ and $c$ can also be expressed as a function of their prime factors, thus
\begin{equation}
\frac{p^X}{q}=\frac{2^{iX}k^{min(Z-2X,Y-2X)}f_1^{jX}f_2^{jX}\cdots f_{n_f}^{jX}}{b_1b_2\cdots b_{n_b} c_1c_2\cdots c_{n_c}}
\label{eqn:50}
\end{equation}
where $b_1, b_2, \dots,b_{n_b}$ and $c_1, c_2, \dots,c_{n_c}$ are prime factors of $b$ and $c$ respectively.  Based on the right side of \cref{eqn:49}, the entire denominator $bc$ is fully subsumed by the numerator, and thus every one of the prime factors in the denominator equals one of the prime factors in the numerator.  Thus after dividing, one or more of the exponents in the numerator reduces thereby canceling the entire denominator accordingly.  For illustration, suppose $b_1=b_2=2$, $b_{n_b}=f_1$, $c_1=c_2=f_2$ and $c_{n_c}=f_{n_f}$.  As such, \cref{eqn:50} simplifies to
\begin{equation}
p^X=2^{iX-4}k^{min(Z-2X,Y-2X)}f_1^{jX-2}f_2^{jX-4}\cdots f_{n_f}^{jX-2}
\label{eqn:51}
\end{equation}
which has terms with exponents that are not multiples of $X$ and are thus not perfect powers of $X$.  If $k=1$, then the $X^{th}$ root is irrational which contradicts the assumption that $\displaystyle{\alpha=\frac{p}{q}}$ is rational. However if $k>1$ such that it is a composite of the factors that are not individually perfect powers of $X$, then the resulting expression is a perfect power of X.  Hence when $k=1$, then per \cref{Thm:2.11_Coprime_Alpha_Beta_Irrational},  $\displaystyle{\beta=\sqrt[X]{C^{Z-2X}-B^{Y-2X}}}$ is irrational.  However if $\displaystyle{\beta=\sqrt[X]{C^{Z-2X}-B^{Y-2X}}}$ is rational, then $k\neq1$ and thus gcd$(A,B,C)\neq1$.

Thus if either or both $\displaystyle{\sqrt[X]{C^{Z-X}-B^{Y-X}}}$ or $\displaystyle{\sqrt[X]{C^{Z-2X}-B^{Y-2X}}}$ are rational, then gcd$(A,B,C)>1$.
\end{proof}

\bigskip

The values of $\displaystyle{\alpha=\sqrt[X]{C^{Z-X}-B^{Y-X}}}$ and $\displaystyle{\beta=\sqrt[X]{C^{Z-2X}-B^{Y-2X}}}$ have critical properties as related to coprimality and other relationships with integer solutions that satisfy the conjecture.  We know from \cref{Thm:2.11_Coprime_Alpha_Beta_Irrational} that if gcd$(A,B,C)=1$, then $\displaystyle{\alpha=\sqrt[X]{C^{Z-X}-B^{Y-X}}}$ and $\displaystyle{\beta=\sqrt[X]{C^{Z-2X}-B^{Y-2X}}}$ are both irrational.  Even though all feasible values of $\alpha$ and $\beta$ map to this pair given \cref{Thm:2.9_Real_Alpha_Beta}, we can still consider other feasible values of $\alpha$ and $\beta$ as related to coprimality.

\bigskip

\begin{theorem}
\label{Thm:2.13_Coprime_Any_Alpha_Beta_Irrational_Indeterminate}
\Conjecture, if gcd$(A,B,C)=1$, then any value of $\alpha$ or $\beta$ that satisfies $C^Z-B^Y=[(C+B)\alpha-CB\beta]^X$ must be irrational or indeterminate.
\end{theorem}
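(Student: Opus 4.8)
The plan is to reduce the statement to the trichotomy that \cref{Thm:2.9_Real_Alpha_Beta} already forces on the solution set of $C^Z-B^Y=[(C+B)\alpha-CB\beta]^X$. Because the hypotheses include an integer solution of \cref{eqn:1} with $\gcd(A,B,C)=1$, the quantity $\sqrt[X]{C^Z-B^Y}=A$ is a fixed positive integer, so the displayed relation is a single equation in the two unknowns $\alpha,\beta$; by \cref{Thm:2.8_Functional_Form,Thm:2.9_Real_Alpha_Beta} every admissible pair is one of (i) $\alpha$ prescribed arbitrarily with $\beta=\bigl(A-(C+B)\alpha\bigr)/(-CB)$, (ii) $\beta$ prescribed arbitrarily with $\alpha=\bigl(A+CB\beta\bigr)/(C+B)$, or (iii) $\alpha$ and $\beta$ equal, up to a common scalar $M$ from \cref{eqn:29c}, the canonical radicals $\sqrt[X]{C^{Z-X}-B^{Y-X}}$ and $\sqrt[X]{C^{Z-2X}-B^{Y-2X}}$. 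I would open the proof by recording this trichotomy, then dispatch the three branches in turn.

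In branches (i) and (ii) the distinguished quantity ($\alpha$ in the first, $\beta$ in the second) is by construction not pinned down by $B,C,X,Y,Z$ at all: it ranges over an infinite family of admissible values, hence is indeterminate in precisely the sense used for $n$ in \cref{Thm:2.7_Indeterminate_Limit}, and the ``indeterminate'' alternative of the conclusion holds. I would make explicit here that a rational instance such as $\alpha=1$ (with its forced rational $\beta$) is not a counterexample, since it is obtained only by making the free choice $\alpha=1$, so $\alpha$ remains the indeterminate member of an infinite family rather than a value the equation singles out. In branch (iii) I would invoke \cref{Thm:2.11_Coprime_Alpha_Beta_Irrational}: under $\gcd(A,B,C)=1$ both canonical radicals are irrational, so the ``irrational'' alternative holds for $\alpha$ and $\beta$ before scaling; I would also note that whenever a radicand is negative and $X$ is even the real radical itself fails to exist, which again lands the value in the stated disjunction.

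The step I expect to be the main obstacle is controlling the common scalar $M$ in branch (iii): a priori $M$ could be irrational in a way that makes the product $M\alpha$ rational even though $\alpha$ is not. I would handle this by observing that if $M\alpha$ were rational then, reading $A=(C+B)M\alpha-CB\,M\beta$ off the equation, $M\beta=\bigl((C+B)M\alpha-A\bigr)/(CB)$ would be rational too, so $\alpha/\beta=\sqrt[X]{(C^{Z-X}-B^{Y-X})/(C^{Z-2X}-B^{Y-2X})}$ would be rational; I would then rerun the prime-factorization and reduced-form argument of \cref{Thm:2.11_Coprime_Alpha_Beta_Irrational}, now applied to this ratio rather than to $\alpha$ or $\beta$ individually, to contradict $\gcd(A,B,C)=1$ and conclude that $M\alpha$ and $M\beta$ stay irrational. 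Assembling the three branches then gives that every admissible value of $\alpha$ or $\beta$ is irrational or indeterminate.
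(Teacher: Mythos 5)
Your proposal takes essentially the same route as the paper's own proof: the same trichotomy inherited from \cref{Thm:2.8_Functional_Form,Thm:2.9_Real_Alpha_Beta}, the same appeal to \cref{Thm:2.11_Coprime_Alpha_Beta_Irrational} for the canonical radicals when gcd$(A,B,C)=1$, and the same reading of an arbitrarily prescribed $\alpha$ or $\beta$ (with the other derived from it) as ``indeterminate,'' which the paper expresses via the substitution tautologies in \crefrange{eqn:52a}{eqn:53d}. Your extra step controlling the scalar $M$ is a minor addition not present in the paper, since the statement concerns pairs satisfying the unscaled equation, and those are already exhausted by your branches (i) and (ii) together with the canonical pair.
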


\begin{proof}
Given $\sqrt[X]{C^Z-B^Y} =(C+B)\alpha-CB\beta$ per \cref{Thm:2.8_Functional_Form}.  If we suppose $\sqrt[X]{C^Z-B^Y}$ is irrational, then $(C+B)\alpha-CB\beta$ is irrational.  With $C$ and $B$ integer, if $\alpha$ and $\beta$ were rational, then $(C+B)\alpha-CB\beta$ is composed solely of rational terms and thus must be rational, which contradicts the assumption of irrationality. Hence $\alpha$ or $\beta$ must be irrational.  Further, per \cref{Thm:2.11_Coprime_Alpha_Beta_Irrational}, $\alpha$ and $\beta$ are irrational if $\sqrt[X]{C^Z-B^Y}$ is irrational and gcd$(A,B,C)=1$.  Thus here too $\alpha$ or $\beta$ must be irrational.

If we suppose instead $\sqrt[X]{C^Z-B^Y}$ is rational, then $(C+B)\alpha-CB\beta$ is rational.
Given gcd$(A,B,C)=1$, per \cref{Thm:2.11_Coprime_Alpha_Beta_Irrational} we know both $\displaystyle{\alpha=\sqrt[X]{C^{Z-X}-B^{Y-X}}}$ and $\displaystyle{\beta=\sqrt[X]{C^{Z-2X}-B^{Y-2X}}}$ are irrational.

Suppose instead $\alpha$ is to be defined as any real other than $\displaystyle{\sqrt[X]{C^{Z-X}-B^{Y-X}}}$.  As such, per \cref{Thm:2.8_Functional_Form}, $\beta$ is derived by $\beta=\displaystyle{\frac{\sqrt[X]{C^Z-B^Y}-(C+B)\alpha}{-CB}}$.  Hence substituting into \cref{eqn:24} gives us
\begin{subequations}
\begin{align}
\sqrt[X]{C^Z-B^Y} &=(C+B)\alpha-CB\beta                        \label{eqn:52a} \\
\sqrt[X]{C^Z-B^Y} &=(C+B)\alpha-CB\frac{\sqrt[X]{C^Z-B^Y}-(C+B)\alpha}{-CB} \label{eqn:52b}\\
\sqrt[X]{C^Z-B^Y} &=(C+B)\alpha+\sqrt[X]{C^Z-B^Y}-(C+B)\alpha    \label{eqn:52c} \\
\sqrt[X]{C^Z-B^Y} &=\sqrt[X]{C^Z-B^Y}                            \label{eqn:52d}
\end{align}
\end{subequations}
Regardless of the value selected for $\alpha$, both $\alpha$ and $\beta$ fall out and thus both are indeterminate when gcd$(A,B,C)=1$.

Suppose instead $\beta$ is to be defined as any real other than $\displaystyle{\sqrt[X]{C^{Z-2X}-B^{Y-2X}}}$.  As such, per \cref{Thm:2.8_Functional_Form}, $\alpha$ is derived by $\alpha=\displaystyle{\frac{\sqrt[X]{C^Z-B^Y}+CB\beta}{C+B}}$.  Hence substituting into \cref{eqn:24} gives us
\begin{subequations}
\begin{align}
\sqrt[X]{C^Z-B^Y} &=(C+B)\alpha-CB\beta                 \label{eqn:53a} \\
\sqrt[X]{C^Z-B^Y} &=(C+B)\frac{\sqrt[X]{C^Z-B^Y}+CB\beta}{C+B}-CB\beta  \label{eqn:53b}\\
\sqrt[X]{C^Z-B^Y} &=\sqrt[X]{C^Z-B^Y}+CB\beta-CB\beta     \label{eqn:53c} \\
\sqrt[X]{C^Z-B^Y} &=\sqrt[X]{C^Z-B^Y}                     \label{eqn:53d}
\end{align}
\end{subequations}
Regardless of the value selected for $\beta$, both $\alpha$ and $\beta$ fall out and thus both are indeterminate when gcd$(A,B,C)=1$.

Hence if gcd$(A,B,C)=1$, then both of the terms $\displaystyle{\alpha=\sqrt[X]{C^{Z-X}-B^{Y-X}}}$ and $\displaystyle{\beta=\sqrt[X]{C^{Z-2X}-B^{Y-2X}}}$ must be irrational, while any other random value of $\alpha$ or $\beta$ that satisfies $C^Z-B^Y=[(C+B)\alpha-CB\beta]^X$ must be irrational or indeterminate.
\end{proof}
\label{Section:Reparameterize_End}

\bigskip

\Line
\subsection{Impossibility of the Terms}
\label{Section:Impossibility_Start}
Having established that pairwise coprimality is a definite byproduct when gcd$(A,B,C)=1$  (\cref{Thm:2.2_Coprime,Thm:2.3_Coprime,Thm:2.4_Coprime}), there is a unique reparamterization of the bases of \BealsEq\, whose rationality is tied to coprimality of terms (\cref{Thm:2.9_Real_Alpha_Beta,Thm:2.10_No_Solution_Alpha_Beta_Irrational,Thm:2.11_Coprime_Alpha_Beta_Irrational,Thm:2.12_Rational_Alpha_Beta_Rational_Then_Not_Coprime}), and that a line through the origin with an irrational slope does not go through any non-trivial lattice points (\cref{Thm:2.1_Irrational_Slope_No_Lattice}), we now delve into proving the conjecture under two mutually exclusive conditions:
\begin{enumerate}
\item geometric implications when gcd$(A,B,C)=1$
\item geometric implications when gcd$(A,B,C)>1$
\end{enumerate}
These steps lead to a critical contradiction which demonstrates the impossibility of the existance of counter-examples due to fundamental features of the conjecture.

As implied by Catalan's conjecture and proven by Mihăilescu \cite{mihailescu2004primary}, no integer solutions exist when $A$, $B$, or $C$ equals 1, regardless of coprimality.  Hence we consider the situation in which $A,B,C\geq2$.  Given the configuration of the conjecture, $A$, $B$, $C$, $X$, $Y$, and $Z$ are positive integers, and thus $A^X$, $B^Y$, and $C^Z$ are also integers.  A set of values that satisfy the conjecture can be plotted on a Cartesian coordinate grid with axes $A^X$, $B^Y$, and $C^Z$.  See \cref{Fig:3DScatter}.  Based on \cref{eqn:1} the line passing through the origin and the point $(A^X,B^Y,A^X+B^Y)$ can be expressed based on the angles in relation to the axes (see \cref{Fig:ScatterPlotWithAngles}), namely
\begin{subequations}
\begin{gather}
\theta_{_{C^ZB^Y}} = \tan^{-1}\frac{A^X+B^Y}{B^Y} \label{eqn:54a} \\
\theta_{_{C^ZA^X}} = \tan^{-1}\frac{A^X+B^Y}{A^X} \label{eqn:54b} \\
\theta_{_{B^YA^X}} = \tan^{-1}\frac{B^Y}{A^X}     \label{eqn:54c}
\end{gather}
\end{subequations}
where $\theta_{_{C^ZB^Y}}$ is the angle subtended between the $B^Y$ axis to the line through the origin and the given point in the $C^Z \times B^Y$ plane, $\theta_{_{C^ZA^X}}$ is the angle subtended between the $A^X$ axis to the line through the origin to the given point in the $C^Z \times A^X$ plane, and $\theta_{_{B^YA^X}}$ is the angle subtended between the $B^Y$ axis to the line through the origin to the given point in the $B^Y \times A^X$ plane.

The line subtended in each plane based on the origin and the given point $(A^X,B^Y,A^X+B^Y)$ has slopes that by definition are identical to the arguments in the corresponding tangent functions in \crefrange{eqn:54a}{eqn:54c}. In each case, the numerator and denominator are integers, and thus the corresponding ratios (and therefore slopes) are rational.  Given that this line passes through the origin and has a rational slope in all three planes, then we conclude the infinitely long line passes through infinitely many lattice points, namely at specific integer multiples of the slopes.

\begin{figure}
\includegraphics[width=.55\textwidth]{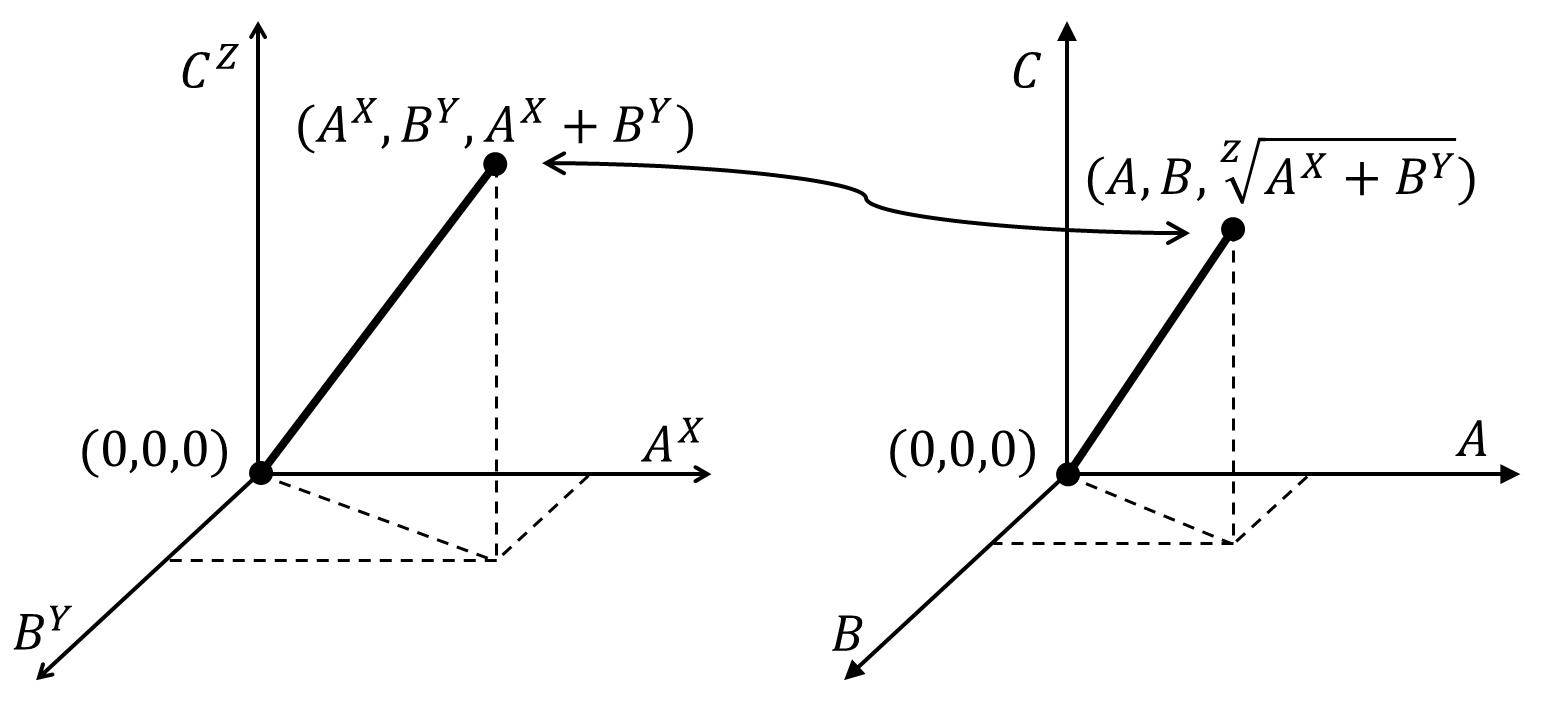}
\caption{Map of a corresponding point between two different plots based on \BealsEq\, satisfying the associated constraints and bounds.}
\label{Fig:3DScatterMap}
\end{figure}

Based on the conjecture the given lattice point $(A^X,B^Y,A^X+B^Y)$ relative to axes $A^X$, $B^Y$, and $C^Z$ corresponds to a lattice point $(A,B,\sqrt[Z]{A^X+B^Y})$ in a scatter plot based on axes $A$, $B$, and $C$.  See \cref{Fig:3DScatterMap}.  The conjecture states there is no integer solution to \BealsEq\, that simultaneously satisfies all the conditions if gcd$(A,B,C)=1$.  Thus from a geometric perspective, this means that if gcd$(A,B,C)=1$, then the line in the scatter plot based on axes $A$, $B$, and $C$ in \cref{Fig:3DScatterMap} could never go through a non-trivial lattice point since $A$, $B$, and $C$ could not all be integer simultaneously.  Conversely, if the corresponding line in the scatter plot based on axes $A$, $B$, and $C$ in \cref{Fig:3DScatterMap} does go through a non-trivial lattice point, then based on the conjecture we know gcd$(A,B,C)>1$.  Hence to validate the conjecture, we need to test the relationship between \BealsEq, the lattice points in both graphs, and the slopes subtended by the origin and these lattice points.

\begin{figure}
\includegraphics[width=.66\textwidth]{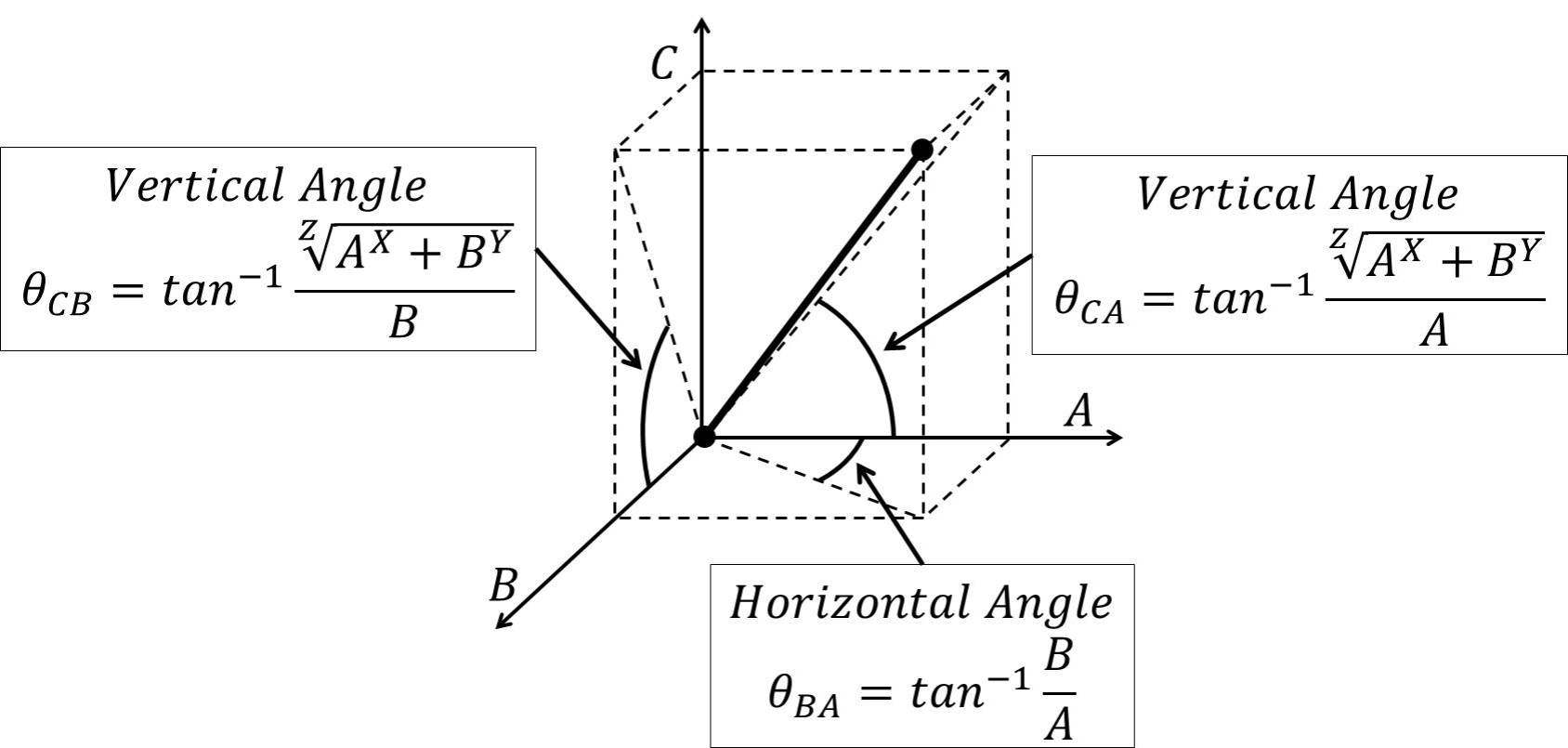}
\caption{Angles between the axes and the line segment subtended by the origin and a single point $A$, $B$, and $C$ from \cref{Fig:CZBYAZScatter}.}
\label{Fig:ScatterPlotWithAngles2}
\end{figure}

\bigskip

\begin{theorem}
\label{Thm:2.14_Main_Proof_Coprime_No_Solutions}
\Conjecture, if $gcd(A,B,C) = 1$, then there is no integer solution.
\end{theorem}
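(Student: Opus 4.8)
The plan is to argue by contradiction, assembling the coprimality, reparameterization, and slope machinery already established. Assume $\gcd(A,B,C)=1$ and that, contrary to the claim, \BealsEq{} admits an integer solution with $X,Y,Z\ge 3$. First I would apply \cref{Thm:2.4_Coprime} to upgrade the hypothesis to full pairwise coprimality, so in particular $\gcd(B,C)=1$, and invoke \cref{Dfn:2.1_X_cannot_be_mult_of_Z} to keep $A^X$ fixed as the term whose exponent is not a multiple of $Z$. Then \cref{Thm:2.11_Coprime_Alpha_Beta_Irrational} forces the canonical reparameterization data $\alpha=\sqrt[X]{C^{Z-X}-B^{Y-X}}$ and $\beta=\sqrt[X]{C^{Z-2X}-B^{Y-2X}}$ to be irrational, and \cref{Thm:2.13_Coprime_Any_Alpha_Beta_Irrational_Indeterminate} says moreover that \emph{every} pair $(\alpha,\beta)$ satisfying $C^Z-B^Y=[(C+B)\alpha-CB\beta]^X$ is either irrational or indeterminate.

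Next I would translate this into the geometry set up around \crefrange{eqn:54a}{eqn:54c} and \cref{Fig:3DScatterMap}. An integer solution is a non-trivial lattice point $(A,B,C)$, lying on the line $\ell$ through the origin, which subtends a slope in each of the three coordinate planes. Using \cref{Thm:2.8_Functional_Form,Thm:2.9_Real_Alpha_Beta} I would rewrite the base $A$ via the reparameterization, $A=M\bigl[(C+B)\alpha-CB\beta\bigr]$ with $M=\displaystyle{\frac{\sqrt[X]{C^Z-B^Y}}{(C+B)\alpha-CB\beta}}$, and argue that the slope of $\ell$ in the plane attached to $A$ thereby inherits its arithmetic character from $\alpha$ and $\beta$; combined with \cref{Thm:2.11_Coprime_Alpha_Beta_Irrational,Thm:2.13_Coprime_Any_Alpha_Beta_Irrational_Indeterminate}, this would force at least one slope of $\ell$ to be irrational. \cref{Thm:2.1_Irrational_Slope_No_Lattice} then forbids $\ell$ from passing through any non-trivial lattice point — contradicting the fact that $(A,B,C)$ is exactly such a point on $\ell$. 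Hence no integer solution exists when $\gcd(A,B,C)=1$.

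The step I expect to carry the full weight — and would write out most carefully — is the transfer of irrationality from the auxiliary quantities $\alpha,\beta$ to a genuine slope of $\ell$. Taken at face value, the slopes $B/A$, $C/A$, $C/B$ are visibly rational the instant $A,B,C$ are granted to be integers, so the contradiction must not be drawn after that grant; it has to be extracted from the simultaneous demands that (i) $A,B,C$ are integers, (ii) $\gcd(A,B,C)=1$, and (iii) the reparameterization data which (i) and (ii) produce through \cref{Thm:2.11_Coprime_Alpha_Beta_Irrational} are irrational. Showing that (iii) is genuinely incompatible with the rationality of the slope that (i) would otherwise guarantee — i.e. ruling out circularity — is the crux, and it is precisely the combined force of \cref{Thm:2.9_Real_Alpha_Beta,Thm:2.11_Coprime_Alpha_Beta_Irrational,Thm:2.12_Rational_Alpha_Beta_Rational_Then_Not_Coprime,Thm:2.13_Coprime_Any_Alpha_Beta_Irrational_Indeterminate} that has to be brought to bear to bridge it.
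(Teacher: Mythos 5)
Your proposal reproduces the paper's overall route (reparameterize the slope denominators via \cref{Thm:2.8_Functional_Form,Thm:2.9_Real_Alpha_Beta}, invoke \cref{Thm:2.11_Coprime_Alpha_Beta_Irrational,Thm:2.13_Coprime_Any_Alpha_Beta_Irrational_Indeterminate}, finish with \cref{Thm:2.1_Irrational_Slope_No_Lattice}), but the decisive step --- transferring the irrationality of $\alpha$ and $\beta$ to an actual slope of $\ell$ --- is exactly the step you leave as a promissory note, and as described it does not go through. Under your reductio hypotheses $A$, $B$, $C$ are integers, so each of $\MBA=B/A$, $\MCA=C/A$, $\MCB=C/B$ is a ratio of integers. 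Rewriting the denominator of $\MCA=C/\sqrt[X]{C^Z-B^Y}$ as $M[(C+B)\alpha-CB\beta]$ changes nothing about its value: the scalar $M$ of \cref{Thm:2.9_Real_Alpha_Beta} is defined precisely so that this combination equals $\sqrt[X]{C^Z-B^Y}$, i.e.\ the quantity the reductio takes to be the integer $A$. Irrationality of $\alpha$ and $\beta$ individually does not make the combination $(C+B)M\alpha-CBM\beta$ irrational --- irrational quantities routinely combine to a rational, and here they do so by construction --- and the ``indeterminate'' branch of \cref{Thm:2.13_Coprime_Any_Alpha_Beta_Irrational_Indeterminate} carries no arithmetic information about a slope at all. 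So ``the slope inherits its arithmetic character from $\alpha$ and $\beta$'' is the missing idea, not a corollary of the theorems you cite; your own closing paragraph concedes this is the crux and then stops short of bridging it, so the proposal as written does not establish the theorem.

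For comparison, the paper's proof runs the same strategy in more detail: it separates the scenarios $A^X\neq B^Y$ and $A^X=B^Y$ (the latter is absent from your sketch), and within the first it distinguishes two cases according to the rationality of $(1+B^Y/A^X)^{1/Z}$. Its Case 1 is dispatched using \cref{Dfn:2.1_X_cannot_be_mult_of_Z} --- which your sketch invokes at the outset but never actually uses --- while in Case 2 it passes from \cref{eqn:59} directly to the claim that $\MCB$ and $\MCA$ are irrational because $\alphaCA$, $\alphaCB$, $\betaCA$, $\betaCB$ are. That claim is precisely the transfer step you flagged as needing to be ``written out most carefully''; the paper asserts it, you defer it, and neither your sketch nor the cited theorems supply an argument for it that is compatible with $C/A$ and $C/B$ being ratios of the integers granted by the reductio. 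Until that inference is actually supplied, the central gap in your proposal remains open.
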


\begin{proof}
The line through the origin and point $(A,B,\sqrt[Z]{A^X+B^Y})$ can be expressed based on the angles in relation to the axes.  See \cref{Fig:ScatterPlotWithAngles2}.  These angles are
\begin{subequations}
\begin{gather}
\theta_{_{CB}} = \tan^{-1}\frac{\sqrt[Z]{A^X+B^Y}}{B} \label{eqn:55a} \\
\theta_{_{CA}} = \tan^{-1}\frac{\sqrt[Z]{A^X+B^Y}}{A} \label{eqn:55b} \\
\theta_{_{BA}} = \tan^{-1}\frac{B}{A}                 \label{eqn:55c}
\end{gather}
\end{subequations}
where $\theta_{_{CB}}$ is the angle subtended between the $B$ axis to the line through the origin and the given point in the $C\times B$ plane, $\theta_{_{CA}}$ is the angle subtended between the $A$ axis to the line through the origin to the given point in the $C\times A$ plane, and $\theta_{_{BA}}$ is the angle subtended between the $B$ axis to the line through the origin to the given point in the $B\times A$ plane.  See \cref{Fig:ScatterPlotWithAngles2}.

The line that corresponds to $\theta_{_{CB}}$ in \cref{eqn:55a} has slope $\displaystyle{m=\frac{\sqrt[Z]{A^X+B^Y}}{B}}$ in the $C\times B$ plane, and the line that corresponds to $\theta_{_{CA}}$ in \cref{eqn:55b} has slope $\displaystyle{m=\frac{\sqrt[Z]{A^X+B^Y}}{A}}$ in the $C\times A$ plane.  These two slopes are different than the slope of the line that corresponds to $\theta_{_{BA}}$ in \cref{eqn:55c} which is $\displaystyle{m=\frac{B}{A}}$ in the $B\times A$ plane since this latter slope is merely the ratio of two integers whereas the numerator of the first two are $\sqrt[Z]{A^X+B^Y}$ which may not be rational.

Building from \cref{eqn:55a,eqn:55b}, let $\MCB$ and $\MCA$
be the slopes of the lines through the origin and the given point in the $C \times B$ and $C \times A$ planes, respectively.  Thus we have
\begin{subequations}
\begin{align}
\MCB  &= \frac{\sqrt[Z]{A^X+B^Y}}{B}    &
\MCA  &= \frac{\sqrt[Z]{A^X+B^Y}}{A}
\label{eqn:56a} \\
\MCB  &= \sqrt[Z]{\frac{A^X+B^Y}{B^Z}}    &
\MCA  &= \sqrt[Z]{\frac{A^X+B^Y}{A^Z}}
\label{eqn:56b} \\
\MCB  &= \left(\frac{A^X}{B^Z} +B^{Y-Z}\right) ^{\frac{1}{Z}}   &
\MCA  &= \left(A^{X-Z} + \frac{B^Y}{A^Z} \right) ^{\frac{1}{Z}}
\label{eqn:56c}
\end{align}
\end{subequations}
Per \cref{Thm:2.1_Irrational_Slope_No_Lattice}, if a line through the origin has an irrational slope, then that line does not pass through any non-trivial lattice points.  Relative to the conjecture, a line in 3 dimensions that passes through the origin also passes through a point equal to the integer bases which satisfy the terms of the conjecture.  Since the solution that satisfies the conjecture is integer and must be a lattice point, then the corresponding lines must have rational slopes.  Hence if there exist integer solutions, then slopes $\MCB$ and $\MCA$ are rational.  If the slopes are irrational, then there is no integer solution.  We must now consider three mutually exclusive scenarios in relation to slopes $\MCB$ and $\MCA$:

\bigskip

\textbf{Scenario 1 of 2:  $\bm{\displaystyle{A^X \neq B^Y}$}.} Suppose $A^X\neq B^Y$.  Dividing both terms by $B^Z$ gives us $\displaystyle{\frac{A^X}{B^Z} \neq B^{Y-Z}}$.  Likewise, dividing both terms of $A^X \neq B^Y$ by $A^Z$ gives us $\displaystyle{A^{X-Z} \neq \frac{B^Y}{A^Z}}$.  Using this relationship, $\MCB$ and  $\MCA$ from \cref{eqn:56c} become
\begin{subequations}
\begin{align}
\MCB &=  \left(\frac{A^X}{B^Z}\right)^{\frac{1}{Z}} \left(1+ \frac{B^{Y-Z}}
         {\left(\frac{A^X}{B^Z}\right)}\right)^{\frac{1}{Z}}  &
\MCA &=  (A^{X-Z})^{\frac{1}{Z}} \left(1+ \frac{\left(\frac{B^Y}{A^Z}\right)}
         {A^{X-Z}} \right)^{\frac{1}{Z}}
\label{eqn:57a} \\
\MCB &=  \frac{A^{\frac{X}{Z}}}{B} \left(1+ \frac{B^Y}{A^X}\right)^{\frac{1}{Z}}  &
\MCA &=  ({A^{\frac{X}{Z}-1}}) \left(1+ \frac{B^Y}{A^X}\right)^{\frac{1}{Z}}
\label{eqn:57b}
\end{align}
\end{subequations}

Both $\MCB$ and $\MCA$ must be rational for there to exist an integer solution that satisfies the conjecture.  Based on \cref{eqn:57b}, there are two cases that can ensure both $\MCB$ and $\MCA$ are rational:
\begin{enumerate}
\item The term $\displaystyle{\left(1+\frac{B^Y}{A^X}\right)^{\frac{1}{Z}}}$ from
      \cref{eqn:57b} is rational, therefore both terms
      $\displaystyle{\frac{A^{\frac{X}{Z}}}{B}}$ and $A^{\frac{X}{Z}-1}$ must be rational so
      their respective products are rational.
\item The term $\displaystyle{\left(1+\frac{B^Y}{A^X}\right)^{\frac{1}{Z}}}$ from
      \cref{eqn:57b} is irrational, therefore both terms
      $\displaystyle{\frac{A^{\frac{X}{Z}}}{B}}$ and $A^{\frac{X}{Z}-1}$ are irrational.
      However the irrationality of these two terms are canceled with the irrationality
      of the denominator of $\displaystyle{\left(1+\frac{B^Y}{A^X}\right)^{\frac{1}{Z}}}$
      so their respective products are rational.
\end{enumerate}

\bigskip

\noindent\textbf{Case 1 of 2: the terms of $\bm{\MCB }$ and $\bm{\MCA }$ are rational}

Starting with the first case, assume $\displaystyle{\left(1+ \frac{B^Y}{A^X}\right)^{\frac{1}{Z}}}$ in \cref{eqn:57b} is rational and thus $\displaystyle{\frac{A^{\frac{X}{Z}}}{B}}$ and $A^{\frac{X}{Z}-1}$ are rational.  Since $A$ is reduced (not a perfect power), gcd$(A,B)=1$, and $A$ is raised to exponent $\displaystyle{\frac{X}{Z}}$ and $\displaystyle{\frac{X}{Z}-1}$, respectively, these terms are rational only if $X$ is an integer multiple of $Z$.  However, per \cref{Dfn:2.1_X_cannot_be_mult_of_Z} on page \pageref{Dfn:2.1_X_cannot_be_mult_of_Z}\, $X$ is not an integer multiple of $Z$, therefore the requirements to ensure $\MCB$ and $\MCA$  are rational cannot be met.

\bigskip
\noindent\textbf{Case 2 of 2: the terms of $\bm{\MCB }$ and $\bm{\MCA }$ are irrational}

Considering the second case, assume $\displaystyle{\left(1+ \frac{B^Y}{A^X}\right)^{\frac{1}{Z}}}$ in \cref{eqn:57b} is irrational, and thus $\displaystyle{\frac{A^{\frac{X}{Z}}}{B}}$ and $A^{\frac{X}{Z}-1}$ must also be irrational such that they cancel the irrationality with multiplication.  Since slopes $\MCB$ and $\MCA$ in \cref{eqn:57b} are rational with irrational terms, both \cref{eqn:57a,eqn:57b} must be rational.  We can re-express \cref{eqn:56a} equivalently as
\begin{equation}
\MCB = \frac{C}{\sqrt[Y]{C^Z-A^X}}   \qquad\qquad\quad
\MCA = \frac{C}{\sqrt[X]{C^Z-B^Y}}
\label{eqn:58}
\end{equation}
in which the denominators must be rational.  Per \cref{Thm:2.8_Functional_Form}, the denominators in \cref{eqn:58} can be reparameterized and thus \cref{eqn:58} becomes
\begin{equation}
        \MCB = \frac{C}{(C+A)M_{_1}\alphaCA  -CAM_{_1}\betaCA }   \qquad\qquad
        \MCA = \frac{C}{(C+B)M_{_2}\alphaCB  -CBM_{_2}\betaCB }
        \label{eqn:59}
\end{equation}
where $\alphaCA$, $\alphaCB$, $\betaCA$, and $\betaCB$ are positive rational numbers and $M_{_1}$ and $M_{_2}$ are positive scalars.   Per \cref{Thm:2.8_Functional_Form,Thm:2.9_Real_Alpha_Beta} $\alphaCA$, $\alphaCB$, $\betaCA$ and $\betaCB$ can be defined as
\begin{subequations}
\begin{align}
\alphaCA &=\sqrt[Y]{C^{Z-Y}-A^{X-Y}} &
\alphaCB &=\sqrt[X]{C^{Z-X}-B^{Y-X}}
\label{eqn:60a} \\
\betaCA  &=\sqrt[Y]{C^{Z-2Y}-A^{X-2Y}} &
\betaCB  &=\sqrt[X]{C^{Z-2X}-B^{Y-2X}}
\label{eqn:60b}
\end{align}
\end{subequations}
Per \cref{Thm:2.11_Coprime_Alpha_Beta_Irrational}, $\alphaCA$, $\alphaCB$, $\betaCA$, and $\betaCB$ as defined in \cref{eqn:60a,eqn:60b} are irrational when gcd$(A,B,C)=1$.  Thus $\MCB$ and $\MCA$ in \cref{eqn:59} are irrational.

Therefore as consequences of both cases 1 and 2, slopes $\MCB$ and $\MCA$ must be irrational when $A^X \neq B^Y$ and gcd$(A,B,C)=1$.

\bigskip

As a side note, an alternate way to define $\alphaCA$, $\alphaCB$, $\betaCA$, and $\betaCB$ is to select any real value for $\alphaCA$ and $\alphaCB$ and then derive $\betaCA$ and $\betaCB$ or select any real value for $\betaCA$ and $\betaCB$ and then derive $\alphaCA$ and $\alphaCB$, per \cref{Thm:2.9_Real_Alpha_Beta}.  However per \cref{Thm:2.13_Coprime_Any_Alpha_Beta_Irrational_Indeterminate}, when gcd$(A,B,C)=1$, the derived values of $\alphaCA$, $\alphaCB$, $\betaCA$, and $\betaCB$ are either irrational or indeterminate.  Thus $\MCB$ and $\MCA$ in \cref{eqn:59} are irrational or indeterminate.  If the slopes $\MCB$ and $\MCA$ were rational, then $\alpha$ and $\beta$ would both be determinate, thus a contradiction.  Hence here too the requirements to ensure $\MCB$ and $\MCA$  are rational cannot be met when gcd$(A,B,C)=1$.

\bigskip

\textbf{Scenario 2 of 2:  $\bm{\displaystyle{A^X=B^Y}$}.} Suppose $A^X=B^Y$.  Given the bases are reduced, we conclude that $A=B$.  Per this theorem, gcd$(A,B)=1$ and thus $A\neq B$, hence this scenario is impossible.

\bigskip

Since the scenarios are mutually exclusive and exhaustive given gcd$(A,B)=1$, and given that in each scenario, the slopes $\MCB$ and $\MCA$ are irrational, then the line that goes through the origin and through the integer solution must also have irrational slopes.  However, as already proven in \cref{Thm:2.1_Irrational_Slope_No_Lattice}, lines through the origin with irrational slopes cannot go through any non-trivial lattice points.  \Conjecture, if gcd$(A,B,C) = 1$, then slopes $\MCB$ and $\MCA$ are irrational and thus there is no integer solution.
\end{proof}

We know that each grid point $(A,B,C)$ subtends a line through the origin and that point, whereby that point is supposed to be an integer solution that satisfies the conjecture.  We also know that each grid point corresponds to a set of slopes.  Further, we know from \cref{Thm:2.1_Irrational_Slope_No_Lattice} that a line through the origin with an irrational slope does not pass through any non-trivial lattice points.  Since both $\MCB$ and  $\MCA$ are irrational when gcd$(A,B,C)=1$, then their corresponding lines fail to go through any non-trivial lattice points, and thus these slopes mean there are no corresponding integer solutions for $A$, $B$, and $C$.  Hence there is no integer solution satisfying the conjecture when gcd$(A,B,C)=1$.
\label{Section:Impossibility_End}

\bigskip

\Line
\subsection{Requirement for Possibility of the Terms}
\label{Section:Possibility_Start}
Having established that the slopes of a line through the origin and the lattice point $(A,B,C)$ are irrational when gcd$(A,B,C)=1$ translates to no integer solutions to the conjecture and the non-existence of a non-trivial lattice point.  We now consider the reverse; if there is an integer solution that satisfies the conjecture, then gcd$(A,B,C)$ must be greater than 1 translates to the existence of a non-trivial lattice point through which the line passes.

\bigskip

\begin{theorem}
\label{Thm:2.15_Main_Proof_Solutions_Then_Not_Coprime}
\Conjecture, if there are integer solutions, then $gcd(A,B,C)>1$.
\end{theorem}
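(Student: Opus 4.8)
The plan is to prove \cref{Thm:2.15_Main_Proof_Solutions_Then_Not_Coprime} by contradiction, recognizing it as essentially the contrapositive of \cref{Thm:2.14_Main_Proof_Coprime_No_Solutions} but recast in the geometric language of this subsection. Suppose \BealsEq\ with $X,Y,Z\geq3$ admits an integer solution $\{A,B,C,X,Y,Z\}$, and suppose toward a contradiction that $\gcd(A,B,C)=1$. By \cref{Thm:2.5_X_cannot_be_mult_of_Z,Dfn:2.1_X_cannot_be_mult_of_Z} we may assume $X$ is not an integer multiple of $Z$, and by \cite{mihailescu2004primary} each of $A,B,C$ is at least $2$, so $(A,B,C)$ is a genuine non-trivial lattice point in the scatter plot on axes $A$, $B$, $C$. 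The unique line through the origin and $(A,B,C)$ then passes through a non-trivial lattice point, so by the contrapositive of \cref{Thm:2.1_Irrational_Slope_No_Lattice} its slope in each coordinate plane is rational; in particular the slopes $\MCB=\sqrt[Z]{A^X+B^Y}/B$ and $\MCA=\sqrt[Z]{A^X+B^Y}/A$ of \cref{eqn:56a} are both rational.

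First I would confront these rationality assertions with the internal analysis of \cref{Thm:2.14_Main_Proof_Coprime_No_Solutions}. That theorem shows, still under $\gcd(A,B,C)=1$, that with $A^X\neq B^Y$ (the case $A^X=B^Y$ being vacuous since $\gcd(A,B)=1$ forces $A=B$) the slopes $\MCB$ and $\MCA$ must be irrational: writing them via \cref{eqn:58} as $C/\sqrt[Y]{C^Z-A^X}$ and $C/\sqrt[X]{C^Z-B^Y}$ and reparameterizing the denominators through \cref{Thm:2.8_Functional_Form,Thm:2.9_Real_Alpha_Beta}, the building blocks $\alphaCA$, $\alphaCB$, $\betaCA$, $\betaCB$ of \cref{eqn:60a,eqn:60b} are irrational by \cref{Thm:2.11_Coprime_Alpha_Beta_Irrational}, while the only alternative route to rational slopes --- cancellation of the irrational factors $A^{X/Z}/B$ and $A^{X/Z-1}$ --- is closed off because $X$ is not a multiple of $Z$ (\cref{Dfn:2.1_X_cannot_be_mult_of_Z}). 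Hence the slopes are irrational, directly contradicting the rationality just deduced from the existence of the lattice point. Equivalently and more briefly: \cref{Thm:2.14_Main_Proof_Coprime_No_Solutions} asserts outright that $\gcd(A,B,C)=1$ precludes any integer solution, which contradicts the standing hypothesis.

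It follows that $\gcd(A,B,C)=1$ is impossible, and since $\gcd(A,B,C)$ is always a positive integer we conclude $\gcd(A,B,C)>1$ whenever \BealsEq\ with $X,Y,Z\geq3$ has an integer solution. Geometrically this is the promised converse of \cref{Thm:2.14_Main_Proof_Coprime_No_Solutions}: the line through the origin and the solution point does pass through a non-trivial lattice point, which forces each planar slope to be rational, which in turn forces $A$, $B$, and $C$ to share a common factor greater than $1$.

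The part I expect to require the most care is the logical plumbing rather than any calculation. \cref{Thm:2.1_Irrational_Slope_No_Lattice} is stated in one direction only, so its contrapositive must be invoked cleanly; and although it is tempting to finish directly through \cref{Thm:2.12_Rational_Alpha_Beta_Rational_Then_Not_Coprime}, doing so would require first arguing that rationality of the slopes forces one of the specific radicals $\sqrt[X]{C^{Z-X}-B^{Y-X}}$ or $\sqrt[X]{C^{Z-2X}-B^{Y-2X}}$ to be rational, which is not immediate because of the free scalar $M$ of \cref{Thm:2.9_Real_Alpha_Beta}. Routing the argument instead through the already-proven \cref{Thm:2.14_Main_Proof_Coprime_No_Solutions} avoids that subtlety, so once the contrapositive bookkeeping is set up the theorem follows with no new machinery.
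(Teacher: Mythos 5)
Your proposal is logically sound and correct relative to the paper's own results, but it takes a genuinely different route from the paper's proof of \cref{Thm:2.15_Main_Proof_Solutions_Then_Not_Coprime}. You observe, rightly, that the statement is exactly the contrapositive of \cref{Thm:2.14_Main_Proof_Coprime_No_Solutions}, so once that theorem is accepted the present one follows with no new work: assume an integer solution with $\gcd(A,B,C)=1$ and contradict \cref{Thm:2.14_Main_Proof_Coprime_No_Solutions} directly. The paper does not argue this way; it mounts a second, standalone slope analysis. It writes $k=\gcd(A,B,C)$, $A=ak$, $B=bk$, $C=ck$, re-expresses the slopes $\MCB$ and $\MCA$ in terms of $a$, $b$, $k$, splits into the scenarios $A^X\neq B^Y$ and $A^X=B^Y$ and into cases according to whether the factor $\bigl(1+\frac{(bk)^Y}{(ak)^X}\bigr)^{1/Z}$ is rational, and then argues via \cref{Dfn:2.1_X_cannot_be_mult_of_Z} and \cref{Thm:2.12_Rational_Alpha_Beta_Rational_Then_Not_Coprime} that rationality of the slopes is attainable only when $k>1$. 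What each approach buys: yours is shorter, is airtight as a deduction from \cref{Thm:2.14_Main_Proof_Coprime_No_Solutions}, and deliberately sidesteps the delicate inference from rationality of the slopes to rationality of the specific radicals $\alphaCA$, $\betaCA$, $\alphaCB$, $\betaCB$ in the presence of the scalar $M$ of \cref{Thm:2.9_Real_Alpha_Beta} --- a subtlety you flag explicitly and which the paper's Case 2 in fact relies on without addressing; the paper's version, by contrast, aims to exhibit the mechanism by which a common factor $k>1$ can produce rational slopes (e.g., $ak$ becoming a perfect $Z$-th power), which has some expository value as a characterization but is logically redundant given \cref{Thm:2.14_Main_Proof_Coprime_No_Solutions} and inherits the very gap you avoided. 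In short: correct, and arguably cleaner than the paper's own treatment, with the caveat that its strength is only that of \cref{Thm:2.14_Main_Proof_Coprime_No_Solutions} on which it leans entirely.
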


\begin{proof}
Given the configuration of the conjecture, $A$, $B$, $C$, $X$, $Y$, and $Z$ are positive integers, and thus $A^X$, $B^Y$, and $C^Z$ are also integers.  A set of values that satisfy the conjecture correspond to a point on a scatter plot with axes $A$, $B$, and $C$.  See \cref{Fig:3DScatterMap}.  Based on \cref{eqn:1} the line passing through the origin and the point $(A,B,\sqrt[Z]{A^X+B^Y})$ can be expressed based on its slopes in the three planes (see \cref{eqn:55a,eqn:55b,eqn:56a} and \cref{Fig:ScatterPlotWithAngles2}), namely
\begin{subequations}
\begin{align}
\MCB &= \frac{\sqrt[Z]{A^X+B^Y}}{B} =\frac{C}{B}   &
\MCA &= \frac{\sqrt[Z]{A^X+B^Y}}{A} =\frac{C}{A}   \label{eqn:61a} \\
\MCB &= \sqrt[Z]{\frac{A^X+B^Y}{B^Z}} &
\MCA &= \sqrt[Z]{\frac{A^X+B^Y}{A^Z}}              \label{eqn:61b}
\end{align}
\end{subequations}
where $\MCB$ and $\MCA$ are the slopes of the lines through the origin and the given point in the $C \times B$ and $C \times A$ planes, respectively.  We can likewise define $\MBA$ as the slope of the lines through the origin and the given point in the $A\times B$ plane based on \cref{eqn:55c}, namely
\begin{equation}
  \MBA  = \frac{B}{A} \label{eqn:62}
\end{equation}
We know from \cref{Thm:2.14_Main_Proof_Coprime_No_Solutions} that $\MCB$ and $\MCA$ cannot be rational if gcd$(A,B,C)=1$.  Suppose gcd$(A,B,C)=k$ where $k\geq 2$.  Thus with integer $k$ common to $A$, $B$, and $C$, we can express \cref{eqn:1} with the common term, namely
\begin{equation}
a^Xk^X + b^Yk^Y = c^Zk^Z \label{eqn:63}
\end{equation}
where $a$, $b$, and $c$ are positive coprime integer factors of $A$, $B$, and $C$ respectively, and where $A=ak$, $B=bk$, $C=ck$, and where $A^X=a^Xk^X$, $B^Y=b^Yk^Y$, and $C^Z=c^Zk^Z$.  We can thus express the slopes from \cref{eqn:61b,eqn:62} with the common term, namely
\begin{subequations}
\begin{gather}
\MCB  = \sqrt[Z]{\frac{a^Xk^X+b^Yk^Y}{b^Zk^Z}} \qquad\qquad
\MCA  = \sqrt[Z]{\frac{a^Xk^X+b^Yk^Y}{a^Zk^Z}} \label{eqn:64a}\\
\MBA  = \frac{bk}{ak} = \frac{b}{a}            \label{eqn:64b}
\end{gather}
\end{subequations}
We observe that the common term $k$ cancels from $\MBA$ in \cref{eqn:64b} and thus $\MBA$ is rational regardless of the common term.  We can simplify \cref{eqn:64a} as
\begin{equation}
\MCB  = \left(\frac{(ak)^X}{(bk)^Z} +(bk)^{Y-Z}\right) ^{\frac{1}{Z}}   \qquad  \MCA  =  \left((ak)^{X-Z} + \frac{(bk)^Y}{(ak)^Z} \right) ^{\frac{1}{Z}}    \label{eqn:65}
\end{equation}

Before applying Newton's generalized binomial expansion to slopes $\MCB$ and $\MCA$ in \cref{eqn:65}, we must first consider two mutually exclusive scenarios:

\bigskip

\textbf{Scenario 1 of 2:  $\bm{\displaystyle{A^X \neq B^Y}$}.}  Suppose $A^X\neq B^Y$.  As such
$\displaystyle{\frac{A^X}{B^Z}\neq  B^{Y-Z}}$ and $\displaystyle{A^{X-Z} \neq \frac{B^Y}{A^Z}}$, and thus $\displaystyle{\frac{(ak)^X}{(bk)^Z}\neq (bk)^{Y-Z}}$ and $\displaystyle{(ak)^{X-Z} \neq \frac{(bk)^Y}{(ak)^Z}}$.  Therefore we can re-express $\MCB$ and $\MCA$ in \cref{eqn:65} as
\begin{subequations}
\begin{align}
m_{_{C, B}} &= \left[\frac{(ak)^X}{(bk)^Z}\right]^{\frac{1}{Z}}
               \left[1+\frac{(bk)^{Y-Z}} {\left(\frac{(ak)^X}{(bk)^Z}\right)}\right]
               ^{\frac{1}{Z}}   &
m_{_{C, A}} &= \left[(ak)^{X-Z}\right]^{\frac{1}{Z}} \left[1+
               \frac{ \left( \frac{(bk)^Y}{(ak)^Z} \right)} {(ak)^{X-Z}} \right] ^{\frac{1}{Z}}
\label{eqn:66a} \\
m_{_{C, B}} &= \frac{(ak)^{\frac{X}{Z}}}{bk}  \left(1 +\frac{(bk)^Y}{(ak)^Z}\right)
              ^{\frac{1}{Z}}   &
m_{_{C, A}} &= {(ak)^{\frac{X}{Z}-1}} \left(1 +\frac{(bk)^Y}{(ak)^Z}\right) ^{\frac{1}{Z}}
\label{eqn:66b}
\end{align}
\end{subequations}

\bigskip

Given there are integer solutions that satisfy the conjecture, then $\MCB$ and $\MCA$ must be rational.  Based on \cref{eqn:66b}, there are two cases that can ensure both $\MCB$ and $\MCA$ are rational:
\begin{enumerate}
\item The term $\displaystyle{\left(1+\frac{(bk)^Y}{(ak)^Z}\right)^{\frac{1}{Z}}}$ from
      \cref{eqn:66b} is rational, therefore both terms
      $\displaystyle{\frac{(ak)^{\frac{X}{Z}}}{bk}}$ and $(ak)^{\frac{X}{Z}-1}$ must be
      rational so their respective products are rational.
\item The term $\displaystyle{\left(1+\frac{(bk)^Y}{(ak)^Z}\right)^{\frac{1}{Z}}}$ from
      \cref{eqn:66b} is irrational, therefore both terms
      $\displaystyle{\frac{(ak)^{\frac{X}{Z}}}{bk}}$ and $(ak)^{\frac{X}{Z}-1}$ are irrational.
      However the irrationality of these two terms are canceled with the irrationality
      of the denominator of $\displaystyle{\left(1+\frac{(bk)^Y}{(ak)^Z}\right)^{\frac{1}{Z}}}$
      so their respective products are rational.
\end{enumerate}

\bigskip

\noindent\textbf{Case 1 of 2: the terms of $\bm{\MCB }$ and $\bm{\MCA }$ are rational}

Starting with the first case, assume $\displaystyle{\left(1+ \frac{(bk)^Y}{(ak)^X}\right)^{\frac{1}{Z}}}$ in \cref{eqn:66b} is rational and thus $\displaystyle{\frac{(ak)^{\frac{X}{Z}}}{bk}}$ and $(ak)^{\frac{X}{Z}-1}$ are rational.  Applying the binomial expansion to \cref{eqn:66b} gives us

\begin{equation}
\MCB =  \frac{(ak)^{\frac{X}{Z}}}{bk}  \sum \limits_{i=0}^{\infty}
                \binom{\frac{1}{Z}}{i} \frac{(bk)^{Yi}}{(ak)^{Xi}}  \qquad
\MCA =  {(ak)^{\frac{X}{Z}-1}} \sum \limits_{i=0}^{\infty}
            \binom{\frac{1}{Z}}{i} \frac{(bk)^{Yi}}{(ak)^{Xi}}
\label{eqn:67}
\end{equation}
The binomial coefficient $\displaystyle{\binom{\frac{1}{Z}}{i}}$ and ratio $\displaystyle{\frac{(bk)^{Yi}}{(ak)^{Xi}}}$ in both formulas in \cref{eqn:67} are rational for all $i$, as are their products, regardless of the value of $k$.  Hence we consider the terms $\displaystyle{\frac{(ak)^{\frac{X}{Z}}}{bk}}$ and $(ak)^{\frac{X}{Z}-1}$ in \cref{eqn:67}.  Since integers $A$ and $ak$ are reduced (not perfect powers), there are only three possibilities that ensure they are rational:
\begin{enumerate}
\item $a$ is a perfect power of $Z$, or
\item $X$ is an integer multiple of $Z$, or
\item $k > 1$ such that $ak$ is a perfect power of $Z$.
\end{enumerate}

Suppose $a$ is a perfect power of Z.  Since $A=ak$ cannot be a perfect power of $Z$ given that $A$ is reduced, then $k$ must be greater than 1 so that when factored from composite $A$, the remaining factors of $A$ are a perfect power of $Z$.  Hence $k>1$ and thus gcd$(A,B,C)>1$.

Suppose instead $X$ is an integer multiple of $Z$, hence exponent $X=iZ$ for some positive integer $i$.  However, per \cref{Dfn:2.1_X_cannot_be_mult_of_Z} on page \pageref{Dfn:2.1_X_cannot_be_mult_of_Z}, $X$ is not an integer multiple of $Z$.  Thus if $k=1$, the term $\displaystyle{(ak)^{\frac{X}{Z}}=A^{\frac{X}{Z}}=a^{\frac{X}{Z}}}$ is not a perfect power and thus $\MCB$ and $\MCA$ are irrational, contradicting their given rationality.  If instead $k$ is a multiple of $a$ such as $k=a^j$ for some positive integer $j$, then $\displaystyle{(ak)^{\frac{X}{Z}}=a^{\frac{X+j}{Z}}}$ for which $X+j$ could be a multiple of $Z$.  Thus $\MCB$ and $\MCA$ are rational only for some values $k>1$ and thus gcd$(A,B,C)>1$.

\bigskip

\noindent\textbf{Case 2 of 2: the terms of $\bm{\MCB }$ and $\bm{\MCA }$ are irrational}

Considering the second case, assume $\displaystyle{\left(1+ \frac{(bk)^Y}{(ak)^X}\right)^{\frac{1}{Z}}}$ in \cref{eqn:66b} is irrational, and thus $\displaystyle{\frac{(ak)^{\frac{X}{Z}}}{bk}}$ and $(ak)^{\frac{X}{Z}-1}$ must also be irrational such that they cancel the irrationality with multiplication.  Since slopes $\MCB$ and $\MCA$ in \cref{eqn:66b} are rational with irrational terms, both slopes in \cref{eqn:64a} must be rational.

We can re-express slopes $\MCB$ and $\MCA$ from \cref{eqn:61a} as
\begin{equation}
\MCB = \frac{C}{\sqrt[Y]{C^Z-A^X}}   \qquad\qquad\quad
\MCA = \frac{C}{\sqrt[X]{C^Z-B^Y}}
\label{eqn:68}
\end{equation}
in which the denominators must both be rational.  Per \cref{Thm:2.8_Functional_Form}, the denominators in \cref{eqn:68} can be reparameterized and thus \cref{eqn:68} becomes
\begin{equation}
\MCB = \frac{C}{(C+A)M_{_1}\alphaCA  -CAM_{_1}\betaCA } \qquad\qquad
\MCA = \frac{C}{(C+B)M_{_2}\alphaCB  -CBM_{_2}\betaCB }
\label{eqn:69}
\end{equation}
where $\alphaCA$, $\alphaCB$, $\betaCA$, and $\betaCB$ are positive rational numbers and $M_{_1}$ and $M_{_2}$ are positive scalars.   Per \cref{Thm:2.8_Functional_Form,Thm:2.9_Real_Alpha_Beta} $\alphaCA$, $\alphaCB$, $\betaCA$ and $\betaCB$ are defined as
\begin{subequations}
\begin{align}
\alphaCA &=\sqrt[Y]{C^{Z-Y}-A^{X-Y}} &
\alphaCB &=\sqrt[X]{C^{Z-X}-B^{Y-X}}
\label{eqn:70a} \\
\betaCA  &=\sqrt[Y]{C^{Z-2Y}-A^{X-2Y}} &
\betaCB  &=\sqrt[X]{C^{Z-2X}-B^{Y-2X}}
\label{eqn:70b}
\end{align}
\end{subequations}
Per \cref{Thm:2.11_Coprime_Alpha_Beta_Irrational}, $\alphaCA$, $\alphaCB$, $\betaCA$, and $\betaCB$ as defined in \cref{eqn:70a,eqn:70b} are irrational when gcd$(A,B,C)=k=1$.  However, since $\MCB$ and $\MCA$ in \cref{eqn:70a,eqn:70b} are rational, we need to consider the common factor in the bases.

Per \cref{Thm:2.12_Rational_Alpha_Beta_Rational_Then_Not_Coprime}, $\displaystyle{\sqrt[Y]{C^{Z-Y}-A^{X-Y}}}$ and $\displaystyle{\sqrt[Y]{C^{Z-2Y}-A^{X-2Y}}}$ are rational only if gcd$(A,B,C)>1$.  By extension, $\displaystyle{\sqrt[X]{C^{Z-X}-B^{Y-X}}}$ and $\displaystyle{\sqrt[X]{C^{Z-2X}-B^{Y-2X}}}$ are also rational only if gcd$(A,B,C)>1$.

Since slopes $\MCB$ and $\MCA$ in \cref{eqn:69} are rational, then their denominators are rational, and thus $\alphaCA$, $\alphaCB$, $\betaCA$, and $\betaCB$ must be rational.  Per the defintions of $\alphaCA$, $\alphaCB$, $\betaCA$, and $\betaCB$ in \cref{eqn:70a,eqn:70b} and given \cref{Thm:2.12_Rational_Alpha_Beta_Rational_Then_Not_Coprime} in which these terms can only be rational when gcd$(A,B,C)>1$, we conclude $k$ must be greater than 1 and thus gcd$(A,B,C)>1$.

\bigskip

Therefore as consequences of both cases 1 and 2, for slopes $\MCB$ and $\MCA$ to be rational when $A^X\neq B^Y$ requires gcd$(A,B,C)>1$, and thus $A$, $B$, and $C$ must share a common factor greater than 1.

\bigskip

\textbf{Scenario 2 of 2:  $\bm{\displaystyle{A^X=B^Y}$}.}  If $A^X=B^Y$, then gcd$(A,B)=k>1$.  Hence by definition, $k$ is a factor of $A^X+B^Y$ and of $C^Z$, and thus $A$, $B$, and $C$ must share a common factor greater than 1.

\bigskip

\Conjecture, when there exist integer solutions, two scenarios show common factor $k$ must be greater than 1 to ensure slopes $\MCB$ and $\MCA$ are rational.  We know that each grid point $(A,B,C)$ subtends a line through the origin and that point, whereby that point is supposed to be an integer solution that satisfies the conjecture.  We also know that each grid point corresponds to a set of slopes.  Further, we know from \cref{Thm:2.1_Irrational_Slope_No_Lattice} that a line through the origin with an irrational slope does not pass through any non-trivial lattice points.  Since both $\MCB$ and  $\MCA$ are rational only for some common factor $k>1$, then gcd$(A,B,C)=k$ is required.  We know $\MBA$ is always rational but since $\MCB$ and $\MCA$ can be rational only when gcd$(A,B,C)>1$ for only certain common factors, then we know the lines go through non-trivial lattice points, and thus these slopes mean there can be integer solutions for $A$, $B$, and $C$.  Hence there can be integer solutions satisfying the conjecture only when gcd$(A,B,C)>1$.
\end{proof}
\label{Section:Possibility_End}

\bigskip

\section{Conclusion}
Every set of values that satisfy the Tijdeman-Zagier conjecture corresponds to a lattice point on a multi-dimensional Cartesian grid. Together with the origin this point defines a line in multi-dimensional space.  This line requires a rational slope in order for it to pass through a non-trivial lattice point.  Hence the core of the various proofs contained herein center on the irrationality of the slope based on the coprimality of the terms.  Several key steps were required to establish this relationship and then support the proof.

\cref{Thm:2.2_Coprime,Thm:2.3_Coprime,Thm:2.4_Coprime} establish that within the relation \BealsEq\, if any pair of terms, $A$, $B$, and $C$ is coprime, then all 3 terms must be coprime, and if all 3 terms are coprime, then each pair of terms must likewise be coprime.    Likewise, \cref{Thm:2.5_X_cannot_be_mult_of_Z} establish a similarly restrictive relationship between the exponents, namely that  exponents $X$ and $Y$ cannot be integer multiples or unit fractions of exponent $Z$ and that $Z$ cannot be an integer multiple of $X$ or $Y$.

\cref{Thm:2.6_Initial_Expansion_of_Differences,Thm:2.7_Indeterminate_Limit} establish that the difference of powers can be factored and expanded based on an arbitrary and indeterminate upper limit.

\cref{Thm:2.8_Functional_Form,Thm:2.9_Real_Alpha_Beta} establish that $A^X$ could be parameterized as a linear combination of $C+B$ and $CB$, with two parameters.  \cref{Thm:2.10_No_Solution_Alpha_Beta_Irrational,Thm:2.11_Coprime_Alpha_Beta_Irrational,Thm:2.13_Coprime_Any_Alpha_Beta_Irrational_Indeterminate} establish that when these parameters are irrational there can be no integer solution satisfying the conjecture and if gcd$(A,B,C)=1$, then these parameters must be irrational.  \cref{Thm:2.12_Rational_Alpha_Beta_Rational_Then_Not_Coprime} establishes that if
these two parameters are rational, then gcd$(A,B,C)>1$.

The relationships between coprimality of terms and irrationality of the parameters
(\cref{Thm:2.8_Functional_Form,Thm:2.9_Real_Alpha_Beta,Thm:2.10_No_Solution_Alpha_Beta_Irrational,Thm:2.11_Coprime_Alpha_Beta_Irrational,Thm:2.12_Rational_Alpha_Beta_Rational_Then_Not_Coprime,Thm:2.13_Coprime_Any_Alpha_Beta_Irrational_Indeterminate}) are critical to the slopes that are core to the remaining theorems.  It is shown that the slopes are functions of these parameters and thus the irrationality properties of the parameters translate to irrationality conditions for the slopes.

\cref{Thm:2.1_Irrational_Slope_No_Lattice} establishes that a line with an irrational slope that passes through the origin will not pass through any non-trivial lattice points.  This simple, subtle theorem is critical to the proof since the link between irrationality of slope and non-integer solutions is key to relating the outcomes to coprimality of terms.  The logic of the proof is that integer solutions which satisfy the conjecture can be expressed only with a set of rational slopes and thus tests of the slope rationality are equivalent to tests of the integrality of the solution.

\cref{Thm:2.14_Main_Proof_Coprime_No_Solutions} establishes that when gcd$(A,B,C)=1$, the slopes are irrational.  Thus if the slopes are irrational, then the line that is equivalent to the integer solution does not pass through non-trivial lattice points, hence there is no integer solution.  \cref{Thm:2.15_Main_Proof_Solutions_Then_Not_Coprime} establishes the reverse, namely that the slopes of the corresponding lines can only be rational when gcd$(A,B,C)>1$, and that integer solutions satisfying the conjecture fall on the lines with rational slopes.

Any proof of the Tijdeman-Zagier conjecture requires four conditions be satisfied:
\begin{itemize}
\item $A$, $B$, $C$, $X$, $Y$, and $Z$ are positive integers.
\item $X,Y,Z\geq3$
\item \BealsEq
\item gcd$(A,B,C)=1$
\end{itemize}
Since the set of values that satisfy the conjecture is directly a function of rationality of slopes, we have demonstrated the explicit linkage between the coprimality aspect of the conjecture, the integer requirement of the framework, and properties of slopes of lines through the origin.  Via contradiction these theorems prove the four conditions cannot be simultaneously met.  Given the fully exhaustive and mutual exclusivity of the theorems, the totality of the conjecture is thus proven.

\bigskip

\section*{Acknowledgment}
The authors acknowledge and thank emeritus Professor Harry Hauser for guidance and support in the shaping, wordsmithing, and expounding the theorems, proofs, and underlying flow of the document, and for the tremendous array of useful suggestions throughout.


\section*{References}

\begin{biblist}*

\bib{anni2016modular}{article}{,
  title={Modular elliptic curves over real abelian fields and the
         generalized Fermat equation $x^{2l}+ y^{2m}= z^p$},
  author={Anni, Samuele},
  author={Siksek, Samir},
  journal={Algebra \& Number Theory},
  volume={10},
  number={6},
  pages={1147--1172},
  year={2016},
  publisher={Mathematical Sciences Publishers},
  doi={https://doi.org/10.2140/ant.2016.10.1147}}

\bib{beauchamp2018}{article}{,
  title={A Proof for Beal's Conjecture},
  author={Beauchamp, Julian TP},
  journal={viXra},
  note={www.vixra.org/abs/1808.0567},
  date={2018-9-05} }

\bib{beauchamp2019}{article}{,
  title={A Concise Proof for Beal's Conjecture},
  author={Beauchamp, Julian TP},
  journal={viXra},
  note={www.vixra.org/abs/1906.0199},
  date={2019-6-13} }

\bib{bennett2006equation}{article}{,
  title = {The equation $x^{2n}+y^{2n}=z^5$},
  author = {Bennett, Michael A},
  journal = {Journal of th{\'e}orie of Bordeaux numbers},
  volume = {18},
  number = {2},
  pages = {315--321},
  year = {2006},
  doi={https://doi.org/10.5802/jtnb.546} }

\bib{bennett2015generalized}{article}{,
  title={Generalized Fermat equations: a miscellany},
  author={Bennett, Michael A},
  author={Chen, Imin},
  author={Dahmen, Sander R},
  author={Yazdani, Soroosh},
  journal={International Journal of Number Theory},
  volume={11},
  number={01},
  pages={1--28},
  year={2015},
  publisher={World Scientific},
  doi={https://doi.org/10.1142/S179304211530001X} }

\bib{beukers1998}{article}{,
  title={The Diophantine equation $Ax^p+By^q=Cz^r$},
  author={Beukers, Frits},
  journal={Duke Mathematical Journal},
  month={01},
  year={1998},
  volume={91},
  number={1},
  pages={61--88},
  publisher={Duke University Press},
  dol={https://doi.org/10.1215/S0012-7094-98-09105-0} }

\bib{beukers2020generalized}{article}{,
  title={The generalized Fermat equation},
  author={Beukers, Frits},
  note={https://dspace.library.uu.nl/handle/1874/26637},
  date={2006-01-20} }

\bib{billerey2018some}{article}{,
  title={Some extensions of the modular method and Fermat equations of signature $(13, 13, n)$},
  author={Billerey, Nicolas},
  author={Chen, Iimin},
  author={Dembele, Lassina},
  author={Dieulefait, Luis},
  author={Freitas, Nuno},
  journal={arXiv preprint arXiv:1802.04330},
  year={2018} }

\bib{crandall2006prime}{book}{,
  title={Prime numbers: a computational perspective},
  author={Crandall, Richard},
  author={Pomerance, Carl B},
  volume={182},
  year={2006},
  publisher={Springer Science \& Business Media} }

\bib{dahmen2013perfect}{article}{,
  title={Perfect powers expressible as sums of two fifth or seventh powers},
  author={Dahmen, Sander R},
  author={Siksek, Samir},
  journal={arXiv preprint arXiv:1309.4030},
  year={2013} }

\bib{darmon1995equations}{article}{,
  title={On the equations $z^m=F(x, y)\,and\,Ax^p+By^q=Cz^r$},
  author={Darmon, Henri},
  author={Granville, Andrew},
  journal={Bulletin of the London Mathematical Society},
  volume={27},
  number={6},
  pages={513--543},
  year={1995},
  publisher={Wiley Online Library},
  doi={https://doi.org/10.1112/blms/27.6.513} }

\bib{de2016solutions}{article}{,
  title={Solutions to Beal’s Conjecture, Fermat’s last theorem and Riemann Hypothesis},
  author={{d}e Alwis, A.C. Wimal Lalith},
  journal={Advances in Pure Mathematics},
  volume={6},
  number={10},
  pages={638--646},
  year={2016},
  publisher={Scientific Research Publishing},
  doi={https://doi.org/10.4236/apm.2016.610053} }

\bib{di2013proof}{article}{,
  title={Proof for the Beal conjecture and a new proof for Fermat's last theorem},
  author={Di Gregorio, Leandro Torres},
  journal={Pure and Applied Mathematics Journal},
  volume={2},
  number={5},
  pages={149--155},
  year={2013},
  doi={https://doi.org/10.11648/j.pamj.20130205.11} }

\bib{durango}{webpage}{,
  title={The Search for a Counterexample to Beal’s Conjecture},
  author={Durango, Bill},
  year={2002},
  url={http://www.durangobill.com/BealsConjecture.html},
  note={Computer search results} }

\bib{edwards2005platonic}{book}{,
  title={Platonic solids and solutions to $x^2+y^3=dZ^r$},
  author={Edwards, Edward Jonathan},
  year={2005},
  publisher={Utrecht University} }

\bib{elkies2007abc}{article}{
  title={The ABC's of number theory},
  author={Elkies, Noam},
  journal={The Harvard College Mathematics Review},
  year={2007},
  publisher={Harvard University},
  note={http://nrs.harvard.edu/urn-3:HUL.InstRepos:2793857} }

\bib{fjelstad1991extending}{article}{,
  title={Extending Pascal's triangle},
  author={Fjelstad, P},
  journal={Computers \& Mathematics with Applications},
  volume={21},
  number={9},
  pages={1--4},
  year={1991},
  publisher={Elsevier},
  doi={https://doi.org/10.1016/0898-1221(91)90119-O} }

\bib{gaal2013sum}{article}{,
  title={The sum of two S-units being a perfect power in global function fields},
  author={Ga{\'a}l, Istv{\'a}n},
  author={Pohst, Michael},
  journal={Mathematica Slovaca},
  volume={63},
  number={1},
  pages={69--76},
  year={2013},
  publisher={Springer},
  doi={https://doi.org/10.2478/s12175-012-0083-0} }

\bib{ghosh2011proof}{book}{,
  title={The Proof of the Beal's Conjecture},
  author={Ghosh, Byomkes Chandra},
  publisher={2006 Hawaii International Conference on Statistics, Mathematics and Related Fields,
             Honolulu, Hawaii, USA},
  year={2011},
  note={https://ssrn.com/abstract=1967491} }

\bib{joseph2018another}{article}{,
  title={Another Proof of Beal's Conjecture},
  author={Joseph, James E},
  author={Nayar, Bhamini P},
  journal={Journal of Advances in Mathematics},
  volume={14},
  number={2},
  pages={7878--7879},
  year={2018},
  doi={https://doi.org/10.24297/jam.v14i2.7587} }

\bib{koshy2002elementary}{book}{,
  title={Elementary number theory with applications},
  author={Koshy, Thomas},
  year={2002},
  pages={545},
  publisher={Academic press},
  note={https://doi.org/10.1017/S0025557200173255} }

\bib{kraus1998equation}{article}{,
  title={Sur l'{\'e}quation $a^3+b^3=c^p$},
  author={Kraus, Alain},
  journal={Experimental Mathematics},
  volume={7},
  number={1},
  pages={1--13},
  year={1998},
  publisher={Taylor \& Francis},
  doi={https://doi.org/10.1080/10586458.1998.10504355} }

\bib{macmillan2011proofs}{article}{,
  title={Proofs of power sum and binomial coefficient congruences via Pascal's identity},
  author={MacMillan, Kieren}
  author={Sondow, Jonathan},
  journal={The American Mathematical Monthly},
  volume={118},
  number={6},
  pages={549--551},
  year={2011},
  publisher={Taylor \& Francis},
  doi={https://doi.org/10.4169/amer.math.monthly.118.06.549} }

\bib{beal1997generalization}{article}{
  title={A generalization of Fermat’s last theorem: the Beal conjecture and prize problem},
  author={Mauldin, R. Daniel},
  journal={Notices of the AMS},
  volume={44},
  number={11},
  year={1997},
  note={https://www.ams.org/notices/199711/beal.pdf} }

\bib{merel1997winding}{article}{,
  title={Winding quotients and some variants of Fermat's Last Theorem},
  author={Merel, Loic},
  author={Darmon, Henri},
  journal={Journal f{\"u}r die reine und angewandte Mathematik},
  volume={1997},
  number={490},
  pages={81--100},
  year={1997},
  publisher={De Gruyter},
  doi={https://doi.org/10.1515/crll.1997.490.81} }

\bib{metsankyla2004catalan}{article}{,
  title={Catalan’s conjecture: another old Diophantine problem solved},
  author={Mets{\"a}nkyl{\"a}, Tauno},
  journal={Bulletin of the American Mathematical Society},
  volume={41},
  number={1},
  pages={43--57},
  year={2004},
  doi={https://doi.org/10.1090/S0273-0979-03-00993-5} }

\bib{mihailescu2004primary}{article}{,
  title={Primary cyclotomic units and a proof of Catalan's conjecture},
  author={Mihailescu, Preda},
  journal={Journal Fur die reine und angewandte Mathematik},
  volume={572},
  pages={167--196},
  year={2004},
  doi={https://doi.org/10.1515/crll.2004.048} }

\bib{miyazaki2015upper}{article}{,
  title={Upper bounds for solutions of an exponential Diophantine equation},
  author={Miyazaki, Takafumi},
  journal={Rocky Mountain Journal of Mathematics},
  volume={45},
  number={1},
  pages={303--344},
  year={2015},
  publisher={Rocky Mountain Mathematics Consortium},
  doi={https://doi.org/10.1216/RMJ-2015-45-1-303} }

\bib{nathanson2016diophantine}{article}{,
  title={On a diophantine equation of MJ Karama},
  author={Nathanson, Melvyn B},
  journal={arXiv preprint arXiv:1612.03768},
  year={2016} }

\bib{nitaj1995conjecture}{article}{,
  title={On a Conjecture of Erd{\"o}s on 3-Powerful Numbers},
  author={Nitaj, Abderrahmane},
  journal={Bulletin of the London Mathematical Society},
  volume={27},
  number={4},
  pages={317--318},
  year={1995},
  publisher={Wiley Online Library},
  doi={https://doi.org/10.1112/blms/27.4.317} }

\bib{norvig2010beal}{article}{,
  title={Beal's conjecture: A search for counterexamples},
  author={Norvig, Peter},
  journal={norvig.com, Retrieved 2014-03},
  volume={6},
  year={2010},
  note={http://norvig.com/beal.html} }

\bib{poonen1998some}{article}{,
  title={Some diophantine equations of the form $x^n+y^n=z^m$},
  author={Poonen, Bjorn},
  journal={Acta Arithmetica},
  volume={86},
  number={3},
  pages={193--205},
  year={1998},
  doi={https://doi.org/10.4064/aa-86-3-193-205} }

\bib{poonen2007twists}{article}{,
  title={Twists of $X(7)$ and primitive solutions to $x^2+y^3=z^7$},
  author={Poonen, Bjorn},
  author={Schaefer, Edward F},
  author={Stoll, Michael},
  journal={Duke Mathematical Journal},
  volume={137},
  number={1},
  pages={103--158},
  year={2007},
  publisher={Duke University Press},
  doi={https://doi.org/10.1215/S0012-7094-07-13714-1} }

\bib{siksek2012partial}{article}{,
  title={Partial descent on hyperelliptic curves and the generalized Fermat equation
         $x^3+y^4+z^5=0$},
  author={Siksek, Samir},
  author={Stoll, Michael},
  journal={Bulletin of the London Mathematical Society},
  volume={44},
  number={1},
  pages={151--166},
  year={2012},
  publisher={Wiley Online Library},
  doi={https://doi.org/10.1112/blms/bdr086} }

\bib{siksek2014generalised}{article}{,
  title={The generalised Fermat equation $x^2+ y^3=z^{15}$},
  author={Siksek, Samir},
  author={Stoll, Michael},
  journal={Archiv der Mathematik},
  volume={102},
  number={5},
  pages={411--421},
  year={2014},
  publisher={Springer},
  doi={https://doi.org/10.1007/s0001} }

\bib{shanks2001solved}{book}{,
  title={Solved and unsolved problems in number theory},
  author={Shanks, Daniel},
  volume={297},
  year={2001},
  publisher={American Mathematical Soc.} }

\bib{stillwell2012numbers}{book}{,
  title={Numbers and geometry},
  author={Stillwell, John},
  year={2012},
  pages={133},
  publisher={Springer Science \& Business Media} }

\bib{taylor1995ring}{article}{,
  title={Ring-theoretic properties of certain Hecke algebras},
  author={Taylor, Richard},
  author={Wiles, Andrew},
  journal={Annals of Mathematics},
  pages={553--572},
  year={1995},
  publisher={JSTOR},
  doi={https://doi.org/10.2307/2118560},
  url={https://www.jstor.org/stable/2118560} }

\bib{townsend2010search}{article}{,
  title={The Search for Maximal Values of min$(A,B,C)$/gcd$(A,B,C)$ for $A^x+B^y=C^z$},
  author={Townsend, Arthur R},
  journal={arXiv preprint arXiv:1004.0430},
  year={2010} }

\bib{vega2020complexity}{article}{,
  title={The Complexity of Mathematics},
  author={Vega, Frank},
  journal={Preprints},
  year={2020},
  publisher={MDPI AG},
  doi={https://doi.org/10.20944/preprints202002.0379.v4} }

\bib{waldschmidt2004open}{article}{,
  title={Open diophantine problems},
  author={Waldschmidt, Michel},
  journal={Moscow Mathematical Journal},
  volume={4},
  number={1},
  pages={245--305},
  year={2004},
  publisher={Независимый Московский университет--МЦНМО},
  note={https://arxiv.org/pdf/math/0312440.pdf} }

\bib{waldschmidt2009perfect}{article}{,
  title={Perfect Powers: Pillai's works and their developments},
  author={Waldschmidt, Michel},
  journal={arXiv preprint arXiv:0908.4031},
  year={2009},
  doi={https://doi.org/10.17323/1609-4514-2004-4-1-245-305} }

\bib{wiles1995modular}{article}{,
  title={Modular elliptic curves and Fermat's last theorem},
  author={Wiles, Andrew},
  journal={Annals of mathematics},
  volume={141},
  number={3},
  pages={443--551},
  year={1995},
  publisher={JSTOR},
  doi={https://doi.org/10.2307/2118559} }

\end{biblist}

\end{document}